\newtheorem{thm}{Theorem}[section]
\newtheorem{lemma}[thm]{Lemma}
\newtheorem{cor}[thm]{Corollary}
\theoremstyle{remark}
\newtheorem{rem}[thm]{Remark}
\newtheorem{opp}[thm]{Open problem}
\newtheorem{conj}[thm]{Conjecture}
\theoremstyle{definition}
\newtheoremstyle{Claim}{}{}{\itshape}{}{\itshape\bfseries}{:}{ }{#1}
\theoremstyle{Claim}
\newcommand{\R}{\mathbb{R}}
\newcommand{\X}{\mathcal{X}}
\newcommand{\eps}{\varepsilon}
\theoremstyle{plain}
\def\sideremark#1{\ifvmode\leavevmode\fi\vadjust{
\vbox to0pt{\hbox to 0pt{\hskip\hsize\hskip1em
\vbox{\hsize3cm\tiny\raggedright\pretolerance10000
\noindent #1\hfill}\hss}\vbox to8pt{\vfil}\vss}}}
\begin{document}

\title[]{On the Liouville property for fully nonlinear equations with superlinear first-order terms}


\author{Marco Cirant}
\address{Dipartimento di Matematica ``Tullio Levi-Civita'', Universit\`a degli Studi di Padova, 
via Trieste 63, 35121 Padova (Italy)}
\curraddr{}
\email{cirant@math.unipd.it}
\thanks{}

\author{Alessandro Goffi}
\address{Dipartimento di Matematica ``Tullio Levi-Civita'', Universit\`a degli Studi di Padova, 
via Trieste 63, 35121 Padova (Italy)}
\curraddr{}
\email{alessandro.goffi@unipd.it}
\thanks{}

\subjclass[2010]{Primary: 35J60, 35B53, 35D40}
\keywords{Fully nonlinear equation, Liouville theorems, Hamilton-Jacobi equations, Lane-Emden equations, superlinear gradient terms}
 \thanks{
 The authors are members of the Gruppo Nazionale per l'Analisi Matematica, la Probabilit\`a e le loro Applicazioni (GNAMPA) of the Istituto Nazionale di Alta Matematica (INdAM). The authors warmly thank the anonymous referees for their careful reading and the suggestions that improved the presentation of the first draft of the manuscript. The 
 authors wish to thank Prof. Roberta Filippucci and Dr. Giulio Tralli for fruitful discussions and for suggesting many references on the matter of this paper.
 }

\date{\today}

\begin{abstract}
We consider in this note one-side Liouville properties for viscosity solutions of various fully nonlinear uniformly elliptic inequalities, whose prototype is $F(x,D^2u)\geq H_i(x,u,Du)$ in $\R^N$, where $H_i$ has superlinear growth in the gradient variable. After a brief survey on the existing literature, we discuss the validity or the failure of the Liouville property in the model cases $H_1(u,Du)=u^q+|Du|^\gamma$, $H_2(u,Du)=u^q|Du|^\gamma$ and $H_3(x,u,Du)=\pm u^q|Du|^\gamma-b(x)\cdot Du$, where $q\geq0$, $\gamma>1$ and $b$ is a suitable velocity field. Several counterexamples and open problems are thoroughly discussed. 
\end{abstract}

\maketitle


\section{Introduction}
The question of non-existence of non-trivial solutions to linear and nonlinear partial differential equations and inequalities satisfying one or two-side bounds - usually called \textit{Liouville-type property} - has been widely investigated in the literature. This qualitative property is crucial in many theoretical aspects of PDEs, ranging from regularity theory to quantitative properties, see e.g. the monographs \cite{QS,VeronBook}.

The Liouville property for solutions to PDEs is usually obtained as a consequence of the (invariant) Harnack inequality, see e.g. \cite{CC, GT,KL,Landis}. Other ways to deduce such Liouville-type results are based on mean-value formulas \cite{BLU, Nelson} or on a priori gradient estimates, that can be derived either via maximum principle methods, 
as in \cite{PP}, or via Bernstein-type methods, as started in \cite{SP, Serrin}
, both for elliptic and parabolic equations, see also the recent work \cite{Chang} along with the discussion and references in \cite{CM}. We refer also to \cite{LiNew} for a quite different approach based on a comparison principle on punctured domains and to \cite{PZ} for probabilistic methods as well as for a discussion on control theoretic interpretations.

The case of solutions to partial differential inequalities satisfying a one-side bound is somehow different. It is well-known that superharmonic (subharmonic) functions bounded from below (above) in $\R^N$ are constants provided that $N\leq2$.  This result can be obtained via the Hadamard three-circle theorem \cite[Theorem 29 Ch. 2]{PW} or via different methods involving the existence of a so-called Lyapunov function  (alternatively known as Khas'minskii test, cf \cite{MPcomm} and the references therein), see \cite[Theorem 2.1 and Remark 2.2]{BG_lio1}, \cite[Theorem 2.1]{BC} and also \cite[p. 230-(ii)]{Pigola}. The underlying motive is the unboundedness of the fundamental solution of the Laplace equation.
This result is sharp as there exist counterexamples in higher dimensions, see the introduction in \cite{BG_lio1}. Nonetheless, it is also by now well-known that the perturbation of second-order operators with suitable nonlinearities, such as zero-th and first-order terms, allows to recover the Liouville property in higher dimensions.

The first analysis on supersolutions to semi-linear equations started with B. Gidas \cite{Gi}. When the equation is driven by a linear (or even quasi-linear) operator, the typical strategies to derive Liouville properties for inequalities with zero-th and/or first order terms are two. The first one was developed during the 90's by M.-F. Bidaut-V\'eron \cite{BVarma}, H. Berestycki- I. Capuzzo-Dolcetta- L. Nirenberg \cite{BCDN}, E. Mitidieri-S. Pohozaev \cite{MP,MPbook,DamMit} and V. Kurta \cite{KurtaProc,Kurta}, see also the references therein, and it is based on integral estimates and test function methods via the weak formulation of the problem. See also the references given in \cite{BMPR} for recent developments. 
 Moreover, such integral methods provide finer results that are known in the literature as Liouville comparison principles, as studied e.g. by V. Kurta and V. Kurta-B. Kawohl, see for instance \cite{KurtaProc,Kurta,KurtaKawohl}. When the Liouville property fails, the latter analysis allows even to establish the sharp distance at infinity of the non-constant supersolution bounded below by a constant to the constant itself, cf e.g. \cite[Theorem 1.2]{KurtaProc} and the references therein.
The second approach basically consists in ``radializing" the equation: when it is driven by the Laplacian, one can observe that the spherical mean of the unknown function satisfies again a partial differential inequality, reducing the analysis to an ODE problem, cf \cite[Section 7]{CaristiMitidieri}. We refer to the introduction of \cite{BG_lio1} and to \cite{BMPR,GrygCPAM} for further updated references on the quasi-linear and semi-linear (even degenerate) case for problems posed on Riemannian manifolds and Carnot groups.

However, the aforementioned methods break down in the fully nonlinear setting, and actually fail even for linear problems driven by operators in non-divergence form with non-differentiable diffusion matrix and/or having unbounded drifts, such as those of Ornstein-Uhlenbeck type.  To the authors' knowledge, there are few established approaches to handle one-side Liouville properties for fully nonlinear partial differential inequalities, and all of them are obviously based on maximum principle arguments, as the right setting to investigate such equations is that of viscosity solutions \cite{CC,CIL}. 

The first one was initiated by A. Cutr\`i and F. Leoni \cite{CLeoni}. They proved non-existence of non-trivial solutions (in the viscosity sense) to $F(x,D^2u)\geq u^q$ in $\R^N$ (or more generally to problems driven by rotationally invariant operators perturbed with a weighted radial zero-th order term) when $q$ is below a certain exponent related to the scaling of the equation, via a nonlinear Hadamard three-sphere theorem. This approach is based on the following observation: any nonnegative solution to $F(x,D^2u)\geq u^q$ in $\R^N$ is also a solution to $\mathcal{P}^+_{\lambda,\Lambda}(D^2u)\geq0$ in $\R^N$, where $\mathcal{P}^+_{\lambda,\Lambda}$ stands for the Pucci's maximal operator (see below for the definition) and it satisfies by the maximum principle $\min_{\partial B(0,r)} u=\min_{\bar{B}(0,r)}u$, which implies that the function $r\longmapsto m(r)=\min_{\partial B(0,r)}u$ is decreasing. Then, owing to the radial symmetry of the fundamental solution of the extremal operators, one can establish a nonlinear Hadamard three-sphere theorem, and then deduce the crucial fact that the function $r^{\beta-2}m(r)$ is increasing, $\beta-2$ being the scaling exponent of the Pucci's maximal operator (which will be specified below), see \cite[Corollary 3.1]{CLeoni}. Finally, the interplay between the comparison principle, the strong minimum principle and the construction of a suitable test function allows to prove that $m(r)$ approaches zero as $r \to \infty$ with a certain rate, contradicting the previous step. 
 Alternatively, the monotonicity of the function $r^{\beta-2}m(r)$ can be combined with the weak Harnack inequality from \cite{CC} to get the same conclusion, cf e.g. \cite[Remark 9]{CLeoni}. The approach via the Hadamard three-sphere theorem was later refined in \cite{CDC2003} for problems with gradient terms having at most linear growth, under appropriate smallness conditions on the drift coefficient. See also \cite{FQS} for Liouville theorems under Keller-Osserman type conditions. Other related results based on qualitative properties of the function $m(r)$ have been obtained in \cite{BirDem,BirDemEJDE,QuaasJMPA,QuaasAnnali}, see also \cite{BGL,Goffi} for further refinements for degenerate problems.

Other results for fully nonlinear equations having at most linear growth in the gradient have been obtained in \cite{Rossi}, under suitable conditions at infinity on the coefficients. The approach is based on a strong maximum principle and the construction of suitable test functions. We refer to the references given in \cite{Rossi} for further results on semi-linear and quasi-linear equations under similar conditions on the coefficients. 

More general approaches to handle Bellman-Isaacs equations (even those driven by the $p$-Laplacian and/or set on exterior domains) have been developed in \cite{AScpde, ASannali} by S. Armstrong and B. Sirakov. The results obtained in \cite{ASannali} are based again on maximum principles arguments and the analysis of ``fundamental solutions" to such operators initiated in \cite{ArmstrongCPAM}.  These somehow extended \cite[Theorem 4.1]{CLeoni} to a wider framework when the equation is perturbed by zero-th order terms. Instead, the results in \cite{AScpde} are based on a quantitative version of the strong maximum principle, cf \cite[Theorem 3.3]{AScpde}, and the method of proof allows to encompass both equations driven by Bellman-Isaacs and $p$-Laplacian operators. We emphasize that the dividing exponent implying the Liouville property in the general case of a Bellman-Isaacs operator is not explicit as in the case of rotationally invariant operators, as it depends on the intrinsic dimension of its fundamental solution.

It is well-known that the Liouville property for linear operators is strictly related to the recurrence of the associated diffusion process, which is in turn tied up with the existence of a so-called \textit{Lyapunov function}, see for example \cite{Gry1,MV} and references therein. The argument showing that the Liouville property follows from the existence of Lyapunov functions (or the validity of the {Khas'minskii test) is particularly flexible: it basically needs the strong maximum/minimum principle and the comparison principle on bounded open sets, and does not go through more sophisticated steps such as  regularity properties or Harnack inequalities (as \cite{AScpde,ArmstrongCPAM}). Maximum and comparison principles are two ingredients that are broadly available for elliptic PDEs (cf \cite{BG,GP} for recent developments in the viscosity solutions' framework). This strategy has been consequently extended to the nonlinear framework in the context of  Riemannian manifolds on one hand, see \cite{BMPR,MPcomm,PigolaProc,PigolaRMI,Pigola}, and, on the other hand, in \cite{BC} and later in \cite{BG_lio1,BG_lio2}, with focus on degenerate problems, possibly modeled over H\"ormander vector fields. We refer also to \cite{LionsMusiela} and \cite{BC,MMT,KLP} for related Liouville properties and applications to ergodic control.

We remark in passing that the non-existence properties in the fully nonlinear setting transfer immediately to non-divergence equations (i.e. involving the operator $-\mathrm{Tr}(A(x)D^2u)$), being the Pucci's extremal operators the prototype ones in non-divergence form, see Remark \ref{nondiv}. We emphasize that the literature on Liouville properties driven by non-divergence operators is poor to our knowledge, see e.g. \cite{Sobol} for some results in this direction for semi-linear equations.

Finally, we mention that different Liouville properties for partial differential equations and inequalities have been proved for solutions belonging to $L^p$ scales: for this topic, without being exhaustive, we refer to \cite[Section 13.1]{Gry1}, \cite{KLna}, \cite{Pigola} and the references therein. 

\medskip

In this note, our main motivations in Liouville-type results are their implications in regularity properties and existence of solutions to nonlinear elliptic equations \cite{CGell, GS,Ruiz} (that can be established via blow-up procedures), their qualitative behavior \cite{Bianchi, VeronBook}, ergodic control problems and the large time behavior for nonlinear parabolic equations, cf e.g. \cite{BC}, and the recent developments in the theory of Mean Field Games \cite{CirantCPDE}, where systems of equations with gradient dependent nonlinearities appear naturally. In particular, we aim at investigating Liouville properties for supersolutions bounded below of fully nonlinear equations, with particular emphasis on problems involving a first-order term with superlinear growth in the gradient. 
We will consider the following inequality in the viscosity sense,
 \[
 F(x,D^2u)\geq H_i(x,u,Du)\text{ in }\R^N,
 \]
 where $F$ is uniformly elliptic, i.e.
 \[
 \mathcal{P}^-_{\lambda,\Lambda}(M-Q)\leq F(x,M)-F(x,Q)\leq \mathcal{P}^+_{\lambda,\Lambda}(M-Q)\ ,M,Q\in\mathrm{Sym}_N\ ,Q\geq0\ ,
 \]
 where $\mathrm{Sym}_N$ stands for the space of $N\times N$ symmetric matrices.
Here, $\mathcal{P}^\pm_{\lambda,\Lambda}$ are the Pucci's extremal operators \cite{CC} defined for ellipticity constants $0<\lambda\leq \Lambda$ as
 \[
 \mathcal{P}^+_{\lambda,\Lambda}(M)=\sup_{\lambda I_N\leq A\leq \Lambda I_N}-\mathrm{Tr}(AM)=-\lambda\sum_{e_k>0}e_k-\Lambda\sum_{e_k<0}e_k\ ,
 \] 
 \[
 \mathcal{P}^-_{\lambda,\Lambda}(M)=\inf_{\lambda I_N\leq A\leq \Lambda I_N}-\mathrm{Tr}(AM)=-\Lambda\sum_{e_k>0}e_k-\lambda\sum_{e_k<0}e_k\ ,
 \]
 where $e_k$ stands for the $k$-th eigenvalue of the matrix $M$. We will further assume the normalization condition $F(x,0)=0$, and discuss the validity of the Liouville property for the following model nonlinearities
\begin{align*}
H_1(r,\xi)&=r^q+|\xi|^\gamma\ , \\ H_2(r,\xi)&=r^q|\xi|^\gamma\ ,\\ H_3(x,r,\xi)&=\pm r^q|\xi|^\gamma-b(x)\cdot \xi\ ,
\end{align*}
with $q\geq0$, $\gamma>1$, and $b$ being a suitable confining velocity field. In the first two cases it clearly holds that $\mathcal{P}^+_{\lambda,\Lambda}(D^2u)\geq0$, and this fact will be frequently exploited.

We now recall some related results for problems driven by linear operators. First, it is worth recalling that when $H=H_2$ and $\lambda=\Lambda=1$, in the case $q=0$ the equation reduces to the so-called \textit{viscous Hamilton-Jacobi equation} \cite{Lions85,CGell}, while when $\gamma=0$ it reduces to the celebrated \textit{Lane-Emden equation} \cite{Gi} (see \cite[Theorem 1.1]{ASannali} for results in this direction). For $q\geq0$, $\gamma>1$, it has been first proved in \cite{CaristiMitidieri} that classical solutions to
\[
-\Delta u\geq u^q|Du|^\gamma\text{ in }\R^N\ ,N>2\ ,
\]
must be constants provided that
\[
(N-2)q+(N-1)\gamma\leq N\ .
\]
The proof in \cite[Theorem 7.1]{CaristiMitidieri} is based on replacing $u$ by its spherical mean, which turns out to be a supersolution by means of Jensen's inequality. This allows to reduce the study of the Liouville property to radial supersolutions, hence to an ODE analysis. Instead, a different study was performed in e.g. \cite{KurtaKawohl} and \cite[Theorem 15.1]{MP} via the nonlinear capacity method combined with an asymptotic analysis, see also \cite[Corollary 3]{MelianDCDS}, the recent papers \cite[Theorem 2.1]{VeronDuke}, \cite[Theorem 1.3]{Chang}, and the earlier results in \cite[Corollary 1]{FilippucciJDE2011} together with the further developments obtained therein for elliptic systems of inequalities with weights, even driven by $p$-Laplacians. We finally mention the recent paper \cite{Sun} which studies the elliptic inequality $-\Delta_p u\geq u^q|Du|^\gamma$ on Riemannian manifolds under volume growth conditions, even for negative values of $q,\gamma$. Note that these latter results for negative pairs are new even in the Euclidean case. We remark that \cite[Corollary 1.1]{FilippucciNA} provides the result when $\gamma<1$ and for operators driven by the $p$-Laplacian, see \cite{BirDem} and \cite{MelianDCDS} for a different proof based on the maximum principle.\\
In the fully nonlinear case, the only available Liouville property for the inequality $F\geq H_2$ in $\R^N$ was obtained in \cite{BirDem} when $q>0$ and $0<\gamma<1$, see Remark \ref{rembir}.\\
The case $H=H_1$ driven by the Laplacian in exterior domains has been first tackled in \cite[Theorem 7.2]{CaristiMitidieri} again via the radialization through the spherical mean. The same problem has been recently investigated in much more generality in \cite{QuaasJMPA} via the methods initiated in \cite{CLeoni}, where the Liouville property for
\[
-\Delta u\geq u^q+|Du|^\gamma \quad \text{ in }\R^N\backslash B(0,R_0)
\]
has been proved when $q>\frac{N}{N-2}$ and $\gamma\leq \frac{N}{N-1}$ (the case $q\leq \frac{N}{N-2}$ and $\gamma>0$ being a trivial consequence of the result for supersolutions to $-\Delta u\geq u^q$). This result is sharp, since one can find counterexamples to the Liouville property when $\gamma>\frac{N}{N-1}$, see Remark \ref{H1fail} for an explicit calculation.

The case $H=H_3$ with $q=\gamma=0$ and $F=\mathcal{P}^+$ has been first tackled in \cite{BC} in the Euclidean case and in \cite{BG_lio1} for subelliptic problems. In particular, assuming the existence of a sort of Lyapunov function, namely of a viscosity subsolution $w$ outside a compact set such that $\lim_{|x|\to\infty}w(x)=-\infty$, it has been proved that if $u\in\mathrm{LSC}$ is a supersolution to \[\mathcal{P}^+_{\lambda,\Lambda}(D^2u)+\sup_{\alpha\in A}\{c^\alpha(x)u-b^\alpha(x)\cdot Du\}=0\text{ in }\R^N\] satisfying $\limsup_{|x|\to\infty}\frac{u(x)}{w(x)}\leq 0$, then $u$ is constant. The existence of such $w$ is shown, for example, whenever
\[
\sup_{\alpha\in A}\{b^\alpha(x)\cdot x-c^\alpha(x)|x|^2\log|x|\}\leq \lambda-\Lambda(N-1)
\]
for large $|x|$.
\medskip

In this note we will discuss the following results, which depend on the \textit{effective dimension} \[\beta=\frac{\Lambda}{\lambda}(N-1)+1\] of the Pucci's maximal operator, which is in turn related to the scaling exponents of the nonlinearities.
\begin{itemize}
\item When $H=H_1$ and $F$ is uniformly elliptic, we prove the validity of the Liouville property when $N\geq 3$, and either $q\leq \frac{\beta}{\beta-2}$ and $\gamma>1$ (actually $\gamma>0$ is allowed), or $q>\frac{\beta}{\beta-2}$ and $1<\gamma\leq \frac{\beta}{\beta-1}$, see Theorem \ref{mainsum}. We further prove the sharpness of the result when the above growth conditions fail, see Remark \ref{H1fail}. To our knowledge, these properties are new, although the proof combines known ideas from \cite{CLeoni,QuaasJMPA}. Indeed, nonnegative solutions to $F\geq H_1$ are also solutions to $\mathcal{P}^+_{\lambda,\Lambda}(D^2u)\geq u^q$ in $\R^N$, a fact that is crucial to establish the Liouville property;
\item When $H=H_2$, we prove that the Liouville property fails when 
\[
(\beta-2)q+(\beta-1)\gamma>\beta\ ,q\geq0\ ,\gamma>1.
\]
In particular, when $q=0$ we prove in Theorem \ref{countexq} that for a suitable constant $c>0$ the function $u(x)=c(1+|x|^2)^{-\frac{2-\gamma}{2(\gamma-1)}}$ is a classical solution to $\mathcal{P}^+_{\lambda,\Lambda}(D^2u)\geq |Du|^\gamma$ in $\R^N$, provided that
\[
\gamma>\frac{\beta}{\beta-1}\ .
\]
Based on this observation, we conjecture its validity when
\[
(\beta-2)q+(\beta-1)\gamma\leq\beta, q\geq0\ ,\gamma>1,
\]
and in particular that for the viscosity inequality $F(x,D^2u)\geq |Du|^\gamma$ in $\R^N$ the Liouville property holds provided that $1<\gamma\leq\frac{\beta}{\beta-1}$. This problem will be addressed in a future research.
\item When $H=H_3$ and $q=0$, we provide a sufficient condition to the Liouville property involving the behavior of the drift $b$ 
, for problems with arbitrary ``repulsive'' gradient terms, or quadratic ``absorbing" terms (via a nonlinear Hopf-Cole transformation), see Theorem \ref{rep}. These properties are new, and will be derived from the results in \cite{BC,BG_lio1}, which use Lyapunov functions (Khas'minskii test). We will show in Theorem \ref{rep2} that such condition can be improved in some special cases (such as for supersolutions to equations driven by $F= \mathcal{P}^-$). Moreover, a variant of the Hopf-Cole transformation for fully nonlinear equations allows to treat the case $H(u,Du)=u^q|Du|^2$, see Remark \ref{mixquad}. For these drift perturbed equations, we show that the condition ensuring the Liouville property is sharp, see Remark \ref{sharpdrift}. Therefore, we prove that our nonlinear version of the Khas'minskii test is not only sufficient, but also necessary for the validity of the Liouville property, at least in the two cases $F= \mathcal{P}^\pm$.
\item In Section \ref{sec:rem} we make some remarks on degenerate equations, on the cases $N=1,2$ for $H_i$, $i=1,2,3$, and on some problems driven by non-divergent operators. Finally, we make in Section \ref{sec:sol} some remarks on the Liouville property for solutions to Hamilton-Jacobi equations. Some open problems will also be mentioned throughout the paper.
\end{itemize}
Finally, let us stress that some of our results are new even for supersolutions to the equation
\[
 -\mathrm{Tr}(A(x)D^2u)=H_i(x,u,Du)\text{ in }\R^N\ ,
\]
with $\lambda I_N\leq A\leq \Lambda I_N$, cf Remark \ref{nondiv}.
We defer to a future study the proof of the Liouville property for entire solutions to
\[
F(x,D^2u)\geq u^q|Du|^\gamma\text{ in }\R^N
\]
as well as for a general treatment of the aforementioned properties for Bellman-Isaacs equations. 
\section{Liouville properties for fully nonlinear equations involving the sum of first- and zero-th order terms with power-like growth}\label{sec;sum}
In this section we consider Liouville properties for solutions to viscosity inequalities in the whole space such as
\begin{equation}\label{Fgrad+0}
F(x,D^2u)\geq u^q+ |Du|^\gamma \quad \text{ in }\R^N\ ,
\end{equation}
where $F:\R^N\times\mathrm{Sym}_N\to\R$ is continuous, uniformly elliptic and satisfies the normalization condition $F(x,0)=0$, while $q>0$, $\gamma>1$. 
First, by the definition of uniform ellipticity, one observes that viscosity solutions to \eqref{Fgrad+0} are also solutions to the viscosity inequality
\[
\mathcal{P}^+_{\lambda,\Lambda}(D^2u)\geq u^q\text{ in }\R^N\ .
\]
Then, \cite[Theorem 1.4-(i)]{ASannali} or \cite[Theorem 4.1]{CLeoni} ensure that \eqref{Fgrad+0} has no nontrivial nonnegative supersolutions in $\R^N$ when $0<q\leq \frac{\beta}{\beta-2}$, where \[\beta=\frac{\Lambda}{\lambda}(N-1)+1\ .\] Such a threshold is sharp, since for $q>\frac{\beta}{\beta-2}$ the function $u(x)=c(1+|x|^2)^{-\frac{1}{q-1}}$ satisfies in  the above inequality for a suitable $c>0$ . 
Therefore, we restrict our attention to the case $q>\frac{\beta}{\beta-2}$, and add the nonlinearity $|Du|^\gamma$ to the problem. We characterize below the (sharp) interval for $\gamma$ that guarantees the Liouville property. 

The next result shows indeed that the presence of a (slightly) superlinear first order term allows to extend the range of the validity of the Liouville property even beyond the threshold $q=\frac{\beta}{\beta-2}$.

\begin{thm}\label{mainsum}
Assume that $q>\frac{\beta}{\beta-2}$ and $1<\gamma\leq \frac{\beta}{\beta-1}$.
Then, any nonnegative viscosity supersolution to \eqref{Fgrad+0} is constant.
\end{thm}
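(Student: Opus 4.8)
The plan is to follow the \cite{CLeoni} scheme for $\mathcal{M}^+_{\lambda,\Lambda}(D^2u)\ge u^q$ and graft onto it the use of the superlinear gradient term in the spirit of \cite{QuaasJMPA}. First I would reduce to the maximal operator: by uniform ellipticity, the normalization $F(x,0)=0$ and $u\geq 0$, any viscosity solution of \eqref{Fgrad+0} is a viscosity supersolution of
\[
\mathcal{M}^+_{\lambda,\Lambda}(D^2u)\geq u^q+|Du|^\gamma\geq u^q\geq 0\quad\text{in }\R^N\ .
\]
In particular $u$ is $\mathcal{M}^+$-superharmonic, so the minimum principle yields that $m(r):=\min_{\bar B_r}u=\min_{\partial B_r}u$ is non-increasing, with limit $\ell:=\inf_{\R^N}u\geq 0$. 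If this infimum is attained, the strong minimum principle forces $u$ to be constant and we are done; hence from now on $u$ is non-constant and $m(r)\downarrow\ell$ with $m(r)>\ell$ for every $r$.

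Next I would show $\ell=0$ and prove the nonlinear Hadamard three-sphere lower bound. If $\ell>0$ then $\mathcal{M}^+_{\lambda,\Lambda}(D^2u)\geq u^q\geq \ell^q>0$ everywhere; comparing $u$ on $B_R$ with the paraboloids $\Psi_R(x)=\tfrac{\ell^q}{2N\Lambda}(R^2-|x|^2)+\ell$, which solve $\mathcal{M}^+_{\lambda,\Lambda}(D^2\Psi_R)=\ell^q$ and equal $\ell\leq u$ on $\partial B_R$, gives $u(0)\geq \tfrac{\ell^q R^2}{2N\Lambda}+\ell\to\infty$, a contradiction. So $\ell=0$ and $m(r)\downarrow 0$. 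Since $u$ is a positive supersolution of $\mathcal{M}^+_{\lambda,\Lambda}(D^2u)\geq 0$ with $m(r)\to 0$, comparison from below with the fundamental solution $\Phi(x)=|x|^{-(\beta-2)}$ of the maximal operator on the annuli $\{1<|x|<R\}$, letting $R\to\infty$, gives (this is essentially Corollary 3.1 of \cite{CLeoni}) $u(x)\geq m(1)|x|^{-(\beta-2)}$ for $|x|\geq 1$, hence
\[
m(r)\geq c_0\,r^{-(\beta-2)}\ ,\qquad r\geq 1\ ,\qquad c_0:=m(1)>0\ .
\]

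The heart of the proof is to contradict this bound using the superlinear gradient term, which is where the hypothesis $1<\gamma\leq\frac{\beta}{\beta-1}$ enters. Being the minimum over spheres of a supersolution of $\mathcal{M}^+_{\lambda,\Lambda}(D^2u)\geq u^q+|Du|^\gamma$, the function $m(r)$ is itself a viscosity supersolution of the one-dimensional radialized Pucci inequality; since $m$ is non-increasing, I would propagate this, via comparison with explicitly constructed radial barriers that are classical sub/supersolutions of the corresponding ODE, to an upper bound on the decay of $m$. The relevant dichotomy is the sign of $(\beta-1)(\gamma-1)-1$: when $\gamma\leq\frac{\beta}{\beta-1}$ one has $(\beta-1)(\gamma-1)\leq 1$, and a separation of variables analysis (keeping the $|m'|^\gamma$ term active by means of the bound $m\geq c_0 r^{-(\beta-2)}$ just obtained) forces $m(r)$ to decay strictly faster than $r^{-(\beta-2)}$: by a factor $r^{-\delta}$ with $\delta=\frac{1}{\gamma-1}-(\beta-1)>0$ when $\gamma<\frac{\beta}{\beta-1}$, and by a logarithmic factor $(\log r)^{-(\beta-1)}$ when $\gamma=\frac{\beta}{\beta-1}$. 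In either case this contradicts the Hadamard bound, and $u$ must be constant. The role of $q>\frac{\beta}{\beta-2}$ is only that this is the regime left open by the results recalled right before the statement ($q\leq\frac{\beta}{\beta-2}$ already being handled by \cite{CLeoni}, \cite{ASannali}); since $q>\frac{\beta}{\beta-2}$ makes $u^q$ decay faster than the critical gradient rate, one checks it does not disturb the barrier construction.

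I expect the main obstacle to be exactly this last step in the fully nonlinear viscosity framework. Unlike the semilinear case treated in \cite{CaristiMitidieri}, \cite{QuaasJMPA}, one cannot replace $u$ by its spherical mean, since Jensen's inequality fails for $\mathcal{M}^+_{\lambda,\Lambda}$; moreover $m(r)$ is a priori only continuous, and the radial maximal operator splits into the ``effective dimension'' $\beta$ expression in the convex regime and the genuine $N$-dimensional expression in the concave regime, so no single ODE holds throughout. Making the ODE comparison rigorous, so that it still yields decay faster than $r^{-(\beta-2)}$ precisely in the range $\gamma\leq\frac{\beta}{\beta-1}$ — the same threshold that governs, in the opposite direction, the counterexamples of Remark \ref{H1fail} and Theorem \ref{countexq} — is the delicate point, and is to be carried out with the comparison and strong-minimum-principle machinery of \cite{CLeoni} rather than by a bare ODE computation.
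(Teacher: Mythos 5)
Your preliminary steps are fine and overlap with the paper's (reduction to $\mathcal{M}^+_{\lambda,\Lambda}$, strong minimum principle, the Hadamard-type bound $m(r)\geq m(1)r^{-(\beta-2)}$, which is Lemma \ref{mRincr}~\emph{i)}), but the step you yourself call the heart of the proof is a genuine gap, not a technicality. You claim that the inequality $\mathcal{M}^+_{\lambda,\Lambda}(D^2u)\geq |Du|^\gamma$, together with $m(r)\downarrow 0$, forces $m$ to decay \emph{faster} than $r^{-(\beta-2)}$ (by $r^{-\delta}$, or a log factor at $\gamma=\frac{\beta}{\beta-1}$), contradicting the Hadamard bound. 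Since the zero-order term $u^q$ enters your argument only to show $\inf u=0$, this claim is essentially the statement that $\mathcal{M}^+_{\lambda,\Lambda}(D^2u)\geq |Du|^\gamma$ has no nonconstant nonnegative supersolution for $\gamma\leq\frac{\beta}{\beta-1}$ (one may always subtract the infimum, as the equation does not see constants) --- which is precisely the conjecture the paper states as open and defers to future work (the $q=0$ case of Conjecture \ref{conjprod}). Moreover, the tools you propose cannot deliver it: comparison with radial sub/supersolution barriers, as in \cite{CLeoni}, bounds a supersolution from \emph{below}, hence can only yield \emph{lower} bounds on $m$; and the viscosity touching-point argument that does produce upper bounds on $m$ (the paper's Lemma \ref{mbound}) works for $u^q$ because $u^q(\bar x)\geq m(R)^q>0$ uniformly at the contact point, but degenerates when the right-hand side is $|D\varphi|^\gamma$, since the test function's gradient may vanish there (e.g.\ at $|\bar x|=r$) and the inequality reads $0\geq 0$. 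The ODE heuristic you invoke relies on the divergence structure $(r^{N-1}\bar u')'\leq -r^{N-1}|\bar u'|^\gamma$ available for the Laplacian via spherical means; for $\mathcal{M}^+$ there is no such conserved flux, and the supersolution inequality does not keep ``$|m'|^\gamma$ active'' (it is compatible with $m$ being locally flat), so the forced-decay mechanism does not transfer.

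The paper's proof uses the two nonlinearities with the roles essentially reversed with respect to your plan. The zero-order term gives the decay \emph{upper} bound: by Lemma \ref{mbound}, $m(R)\leq CR^{-2/(q-1)}$, in particular $m(R)\to 0$. The gradient term, for $1<\gamma\leq\frac{\beta}{\beta-1}$ and given $m(R)\to0$, improves the \emph{lower} bound: Lemma \ref{mRincr}~\emph{ii)} shows, by constructing subsolutions $\Theta(|x|^{-\nu}-R^{-\nu})+m(R)$ of $\mathcal{M}^+(D^2\Psi)\leq|D\Psi|^\gamma$ on annuli (which uses $\frac{2-\gamma}{\gamma-1}\geq\beta-2$ and the Hadamard bound to verify the constraint \eqref{crucineq}), that $m(R)R^{\nu}$ is nondecreasing for every $\nu\in(0,\beta-2)$ and $R$ large. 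Choosing $\nu<\frac{2}{q-1}$, the lower bound $m(R)\gtrsim R^{-\nu}$ contradicts $m(R)\leq CR^{-2/(q-1)}$ as $R\to\infty$. So the $u^q$ term is quantitatively essential (it supplies both $m\to0$ and the fast decay to be contradicted), while the gradient term only pushes the decay of $m$ to be slower than any admissible power; your proposal, by contrast, asks the gradient term to produce fast decay, which is unproven, inaccessible by comparison, and equivalent to the open problem.
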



The proof of Theorem \ref{mainsum} will be based on the analysis of $$m(R):=\min_{|x|\leq R}u(x)$$, where $u$ is the supersolution to \eqref{Fgrad+0}.
for $R \ge 0$, which combines the strong minimum principle, see e.g. \cite{BG,GP}, and the comparison principle \cite{CIL}. We will follow some ideas developed in \cite{CLeoni} and \cite{QuaasJMPA}. Note that by the strong minimum principle we can always assume $u>0$ in $\R^N$, so that $m(R)>0$. Indeed, if there existed a point $x_0\in B(0,R)$ such that $u$ attains its minimum at $x_0$, it would follow $u\equiv0$. Then, by the arbitrariness of $R>0$ we would have $u\equiv0$ in $\R^N$.

We start with the following algebraic result.
\begin{lemma}\label{prepP}
Let $u^\delta(x)=C_\delta(1+|x|^2)^{-\frac{\delta}{2}}$ for $\delta,C_\delta>0$. Then
\[
\mathcal{P}^+_{\lambda,\Lambda}(D^2u^\delta)\geq C_\delta\delta\lambda \frac{\beta-\delta-2}{(1+|x|^2)^{\frac{\delta}{2}+1}}\ .
\]
Moreover, for $v^\nu(x)=\Theta|x|^{-\nu}$, $\Theta,\nu>0$, we have for $x\in\R^N\backslash\{0\}$
\[
\mathcal{P}^+_{\lambda,\Lambda}(D^2v^\nu)=-\lambda\nu \Theta(\nu+1)|x|^{-\nu-2}+\nu \Theta\Lambda|x|^{-\nu-2}(N-1)\ .
\]
\end{lemma}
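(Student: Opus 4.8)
The statement amounts to computing Pucci's maximal operator on two radial functions, so the plan is to record the general reduction once and then apply it to $u^\delta$ and $v^\nu$ in turn. Recall that a $C^2$ radial function $\phi(x)=g(r)$, $r=|x|$, satisfies
\[
D^2\phi(x)=\bigl(g''(r)-g'(r)/r\bigr)\frac{x\otimes x}{r^2}+\frac{g'(r)}{r}\,I_N\qquad(x\neq0),
\]
so $D^2\phi(x)$ has eigenvalue $g''(r)$ in the radial direction $x/|x|$ and eigenvalue $g'(r)/r$ with multiplicity $N-1$ in the tangential directions. Hence, by the expression for $\mathcal{M}^+_{\lambda,\Lambda}$ recalled in the introduction, whenever $g'(r)/r\le 0$ one has
\[
\mathcal{M}^+_{\lambda,\Lambda}(D^2\phi(x))=-a(r)\,g''(r)-\Lambda(N-1)\frac{g'(r)}{r},\qquad a(r)=\lambda\ \text{if}\ g''(r)\ge0,\quad a(r)=\Lambda\ \text{if}\ g''(r)<0,
\]
and in all cases $\mathcal{M}^+_{\lambda,\Lambda}(D^2\phi(x))\ge -\lambda g''(r)-\Lambda(N-1)g'(r)/r$, with equality where $g''(r)\ge0$ (replacing $\lambda$ by $\Lambda$ on a nonpositive eigenvalue only increases $\mathcal{M}^+_{\lambda,\Lambda}$).

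For $v^\nu$ I take $g(r)=\Theta r^{-\nu}$, for which $g'(r)/r=-\nu\Theta r^{-\nu-2}<0$ and $g''(r)=\nu(\nu+1)\Theta r^{-\nu-2}>0$, so $a(r)\equiv\lambda$ and the identity above gives at once
\[
\mathcal{M}^+_{\lambda,\Lambda}(D^2v^\nu)=-\lambda\nu\Theta(\nu+1)|x|^{-\nu-2}+\nu\Theta\Lambda(N-1)|x|^{-\nu-2},
\]
which is the asserted formula.

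For $u^\delta$ I take $g(r)=C_\delta(1+r^2)^{-\delta/2}$; a direct differentiation gives $g'(r)/r=-C_\delta\delta(1+r^2)^{-\delta/2-1}<0$ and $g''(r)=-C_\delta\delta(1+r^2)^{-\delta/2-2}\bigl(1-(\delta+1)r^2\bigr)$, which changes sign at $r=(\delta+1)^{-1/2}$. This sign change is the only delicate point: it rules out an identity, but the lower bound $\mathcal{M}^+_{\lambda,\Lambda}(D^2u^\delta)\ge -\lambda g''-\Lambda(N-1)g'/r$ still applies, and expanding its right-hand side while using $\lambda+\Lambda(N-1)=\lambda\beta$ and $\Lambda(N-1)-\lambda(\delta+1)=\lambda(\beta-\delta-2)$ yields
\[
\mathcal{M}^+_{\lambda,\Lambda}(D^2u^\delta)\ge C_\delta\delta\lambda\,(1+r^2)^{-\frac{\delta}{2}-2}\bigl[\beta+(\beta-\delta-2)r^2\bigr].
\]
Since $\beta+(\beta-\delta-2)r^2=(\beta-\delta-2)(1+r^2)+(\delta+2)\ge(\beta-\delta-2)(1+r^2)$ because $\delta+2>0$, the claimed inequality follows for $x\neq0$; the origin is covered by continuity, or directly from $D^2u^\delta(0)=-C_\delta\delta I_N$, whence the left-hand side there equals $\Lambda N C_\delta\delta\ge\lambda(\beta-\delta-2)C_\delta\delta$. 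I do not expect any genuine obstacle: the argument is a routine computation, the only thing to watch being the sign indefiniteness of the radial second derivative of $u^\delta$, which forces the inequality form of the statement and is dealt with by the elementary monotonicity of $\mathcal{M}^+_{\lambda,\Lambda}$ in the sign of the eigenvalue noted above.
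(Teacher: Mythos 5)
Your proof is correct and follows essentially the same route as the paper: radial reduction to the eigenvalues $g''(r)$ and $g'(r)/r$, the identity $\Lambda(N-1)=\lambda(\beta-1)$, and the elementary inequality $\beta+(\beta-\delta-2)r^2\ge(\beta-\delta-2)(1+r^2)$. The only difference is cosmetic: you avoid the paper's case split at $r^2=1/(\delta+1)$ by using the uniform lower bound $\mathcal{M}^+_{\lambda,\Lambda}(D^2\phi)\ge-\lambda g''-\Lambda(N-1)g'/r$, and your separate treatment of the origin is a welcome small addition.
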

\begin{proof}
First, recall that if $f=f(|x|)$, the eigenvalues of $D^2f$ are $f''(|x|)$, which is simple, $\frac{f'(|x|)}{|x|}$ with multiplicity $N-1$. Then, denoting by $f^\delta(|x|)=C_\delta(1+|x|^2)^{-\frac{\delta}{2}}$, we have
\[
(f^\delta)'(|x|)=-C_\delta\delta(1+|x|^2)^{-\frac{\delta}{2}-1}|x|\ ,
\]
\begin{align*}
(f^\delta)''(|x|)&=C_\delta\delta(\delta+2)|x|^2(1+|x|^2)^{-(\frac{\delta}{2}+2)}-C_\delta\delta(1+|x|^2)^{-(\frac{\delta}{2}+1)}\\
&=C_\delta\delta (1+|x|^2)^{-(\frac{\delta}{2}+2)}[(\delta+2)|x|^2-(1+|x|^2)]\\
&=C_\delta\delta (1+|x|^2)^{-(\frac{\delta}{2}+2)}[(\delta+1)|x|^2-1]\ .
\end{align*}
In view of the definition of the Pucci's maximal operator
\[
\mathcal{P}^+_{\lambda,\Lambda}(D^2u)=-\lambda\sum_{e_k>0}e_k-\Lambda\sum_{e_k<0}e_k
\]
we first address the case $|x|^2\leq\frac{1}{\delta+1}$, where $u^\delta(|x|)$ is concave and decreasing. Then, exploiting the fact that $\Lambda(N-1)=\lambda(\beta-1)$, we have
\begin{multline*}
\mathcal{P}^+_{\lambda,\Lambda}(D^2u^\delta)=-\Lambda C_\delta\delta (1+|x|^2)^{-(\frac{\delta}{2}+2)}[(\delta+1)|x|^2-1]+\Lambda C_\delta \delta(1+|x|^2)^{-(\frac{\delta}{2}+1)}(N-1)\\
=C_\delta \delta\Lambda\left[\frac{N-1}{(1+|x|^2)^{\frac{\delta}{2}+1}}\underbrace{-\frac{(\delta+1)|x|^2-1}{(1+|x|^2)^{\frac{\delta}{2}+2}}}_{\geq0}\right]
\geq C_\delta\delta\frac{\Lambda(N-1)}{(1+|x|^2)^{\frac{\delta}{2}+1}}=C_\delta\delta \frac{\lambda(\beta-1)}{(1+|x|^2)^{\frac{\delta}{2}+1}}\ .
\end{multline*}
When instead $|x|^2\geq \frac{1}{\delta+1}$ we write
\begin{align*}
\mathcal{P}^+_{\lambda,\Lambda}(D^2u^\delta)&=-\lambda C_\delta\delta (1+|x|^2)^{-(\frac{\delta}{2}+2)}[(\delta+1)|x|^2-1]+\Lambda C_\delta \delta(1+|x|^2)^{-(\frac{\delta}{2}+1)}(N-1)\\
&=C_\delta \delta\lambda\left[\frac{\beta-1}{(1+|x|^2)^{\frac{\delta}{2}+1}}-\frac{(\delta+1)|x|^2-1}{(1+|x|^2)^{\frac{\delta}{2}+2}}\right]\\
&=C_\delta \delta\lambda\left[\frac{(\beta-1)(1+|x|^2)-(\delta+1)|x|^2+1}{(1+|x|^2)^{\frac{\delta}{2}+2}}\right]\\
&= C_\delta \delta\lambda\left[\frac{(\beta-\delta-2)(1+|x|^2)+\beta}{(1+|x|^2)^{\frac{\delta}{2}+2}}\right]\\
&\geq C_\delta\delta\lambda \frac{\beta-\delta-2}{(1+|x|^2)^{\frac{\delta}{2}+1}}\ .
\end{align*}
where in the last inequality we used that $(\beta-\delta-2)|x|^2+\beta> (\beta-\delta-2)(1+|x|^2)$. In both cases, we get
\[
\mathcal{P}^+_{\lambda,\Lambda}(D^2u^\delta)\geq C_\delta\delta\lambda \frac{\beta-\delta-2}{(1+|x|^2)^{\frac{\delta}{2}+1}}\ .
\]
The second statement follows similarly, since $g^\nu(|x|)=\Theta|x|^{-\nu}$, $(g^\nu)'=-\Theta\nu|x|^{-\nu-1}<0$ and $(g^\nu)''=\nu(\nu+1)\Theta|x|^{-\nu-2}>0$.
\end{proof}

We now state a few lemmata concerning the behavior of $m(R)$.

\begin{lemma}\label{mRincr}  The map $R \mapsto m(R)$ has the following properties.
\begin{itemize}
\item[{\it i)}] If $\mathcal{P}^+_{\lambda,\Lambda}(D^2u)\geq 0$ on $\R^N$, then $m(R)R^{\beta-2}$ is nondecreasing. 
\item[{\it ii)}] If $\mathcal{P}^+_{\lambda,\Lambda}(D^2u) \geq  |Du|^\gamma$ on $\R^N$, $m(R) \to 0$ as $R \to \infty$ and $\gamma\leq \frac{\beta}{\beta-1}$, then $m(R)R^{\nu}$ is nondecreasing for any $\nu \in (0, \beta-2)$ and $R$ large enough. 
\end{itemize}

\end{lemma}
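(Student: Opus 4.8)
The plan is to derive both statements from the comparison principle \cite{CIL} and the explicit radial sub/supersolutions computed in Lemma \ref{prepP}, with \textit{ii)} building on \textit{i)}. For \textit{i)}: since $\mathcal{M}^+_{\lambda,\Lambda}(D^2u)\geq 0$, the minimum principle (\cite{BG,GP}) gives $m(R)=\min_{\bar B_R}u=\min_{\partial B_R}u$, so $m$ is nonincreasing (and $m\geq0$ since $u\geq0$). Taking $\nu=\beta-2$ in the second formula of Lemma \ref{prepP} and using $\Lambda(N-1)=\lambda(\beta-1)$, the fundamental solution $\Phi(x):=|x|^{-(\beta-2)}$ satisfies $\mathcal{M}^+_{\lambda,\Lambda}(D^2\Phi)=0$ on $\R^N\setminus\{0\}$; by positive $1$-homogeneity of $\mathcal{M}^+_{\lambda,\Lambda}$, so does $\alpha+b\,\Phi$ for any $\alpha\in\R$, $b\geq0$. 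Given $0<r_1<R<r_2$, pick $\alpha,b$ so that $\psi:=\alpha+b\,\Phi$ equals $m(r_1)$ on $\partial B_{r_1}$ and $m(r_2)$ on $\partial B_{r_2}$; since $\Phi$ is radially strictly decreasing and $m(r_1)\geq m(r_2)$, one finds $b\geq0$, hence $\psi$ is a subsolution of $\mathcal{M}^+_{\lambda,\Lambda}(D^2v)=0$. As $u\geq\psi$ on the boundary of the annulus $B_{r_2}\setminus\bar B_{r_1}$, comparison gives $u\geq\psi$ inside, and evaluating at radius $R$ yields the nonlinear Hadamard three-sphere inequality
\[
m(R)\ \geq\ \frac{R^{-(\beta-2)}-r_2^{-(\beta-2)}}{r_1^{-(\beta-2)}-r_2^{-(\beta-2)}}\,m(r_1)+\frac{r_1^{-(\beta-2)}-R^{-(\beta-2)}}{r_1^{-(\beta-2)}-r_2^{-(\beta-2)}}\,m(r_2)\ .
\]
Keeping $r_1<R$ fixed, discarding the last (nonnegative) term and letting $r_2\to\infty$ — so $r_2^{-(\beta-2)}\to0$, which uses $\beta>2$, i.e. $N\geq3$ — we obtain $m(R)\,R^{\beta-2}\geq m(r_1)\,r_1^{\beta-2}$, i.e. $R\mapsto m(R)R^{\beta-2}$ is nondecreasing.

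For \textit{ii)}: since $\mathcal{M}^+_{\lambda,\Lambda}(D^2u)\geq|Du|^\gamma\geq0$, part \textit{i)} applies and yields $R_0>0$ and $c_0:=m(R_0)R_0^{\beta-2}>0$ with $m(R)\geq c_0R^{-(\beta-2)}$ for $R\geq R_0$. Fix $\nu\in(0,\beta-2)$; because $\nu<\beta-2$ one has $\gamma\leq\frac{\beta}{\beta-1}<\frac{\nu+2}{\nu+1}$, so $\kappa:=\nu+2-(\nu+1)\gamma>0$. For $\Theta>0$, $a\in\R$ set $\psi_{\Theta,a}(x):=a+\Theta|x|^{-\nu}$; by Lemma \ref{prepP}, $\mathcal{M}^+_{\lambda,\Lambda}(D^2\psi_{\Theta,a})=\nu\Theta\lambda(\beta-2-\nu)|x|^{-\nu-2}$ while $|D\psi_{\Theta,a}|^\gamma=(\nu\Theta)^\gamma|x|^{-(\nu+1)\gamma}$, so $\psi_{\Theta,a}$ is a classical (hence viscosity) subsolution of $\mathcal{M}^+_{\lambda,\Lambda}(D^2v)-|Dv|^\gamma=0$ exactly on $\{|x|\geq\rho(\Theta)\}$, where $\rho(\Theta):=\big(\lambda(\beta-2-\nu)/(\nu\Theta)^{\gamma-1}\big)^{1/\kappa}$ is strictly decreasing in $\Theta$ with $\rho(\Theta)\to0$ as $\Theta\to\infty$. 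Now fix $R_1$ large; for $R_2>R_1$ let $\psi:=\psi_{\Theta_2,a_2}$ be the barrier taking the values $m(R_1)$ on $\partial B_{R_1}$ and $m(R_2)$ on $\partial B_{R_2}$, so that $\Theta_2=\frac{m(R_1)-m(R_2)}{R_1^{-\nu}-R_2^{-\nu}}\to m(R_1)R_1^\nu$ and $a_2=m(R_2)-\Theta_2R_2^{-\nu}\to0$ as $R_2\to\infty$ (here the hypothesis $m(R)\to0$ is used). From $m(R_1)\geq c_0R_1^{-(\beta-2)}$ and $(\beta-2-\nu)(\gamma-1)<\kappa$ — which is precisely $\gamma<\frac{\beta}{\beta-1}$ — one checks $m(R_1)R_1^\nu>\rho^{-1}(R_1)$, hence $\Theta_2>\rho^{-1}(R_1)$ and $\rho(\Theta_2)\leq R_1$, for $R_1$ and then $R_2$ large enough; thus $\psi$ is a subsolution on the annulus $B_{R_2}\setminus\bar B_{R_1}$. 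Since $u\geq\psi$ on its boundary, comparison gives $u\geq\psi$ inside, whence $m(R)\geq a_2+\Theta_2R^{-\nu}$ for $R_1\leq R\leq R_2$; letting $R_2\to\infty$ yields $m(R)R^\nu\geq m(R_1)R_1^\nu$ for every $R\geq R_1$, i.e. $R\mapsto m(R)R^\nu$ is nondecreasing for $R$ large.

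The hard part is entirely in \textit{ii)}, and two points there deserve care. First, the barrier must stay a subsolution all the way down to the inner radius $R_1$; this forces $\Theta$ not to be too small, and the fact that the natural choice $\Theta\approx m(R_1)R_1^\nu$ is admissible for large $R_1$ is exactly where the lower bound $m(R)\geq c_0R^{-(\beta-2)}$ from part \textit{i)} and the growth restriction $\gamma\leq\frac{\beta}{\beta-1}$ enter, through the comparison of exponents above. Second, the borderline case $\gamma=\frac{\beta}{\beta-1}$ makes those exponents coincide, so the pure power barrier $\Theta|x|^{-\nu}$ no longer suffices and one needs a mild logarithmic correction (or a preliminary bootstrap of the lower bound for $m$); this is the only genuinely delicate point. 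The passage $R_2\to\infty$ in the comparison is routine once one works first on the bounded annuli $B_{R_2}\setminus\bar B_{R_1}$ and only afterwards sends $R_2\to\infty$, exploiting $m(R)\to0$ exactly as in part \textit{i)}.
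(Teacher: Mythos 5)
Your part \textit{i)} is correct and is essentially the paper's argument: the same barrier $\alpha+b|x|^{-(\beta-2)}$ (the paper writes it as $\Psi$ interpolating $m(R_1)$ and $m(R)$), comparison on annuli, then letting the outer radius go to infinity and using $m\geq 0$. Your part \textit{ii)} also follows the paper's scheme — the pure power barrier $a+\Theta|x|^{-\nu}$, the same admissibility condition on $\Theta$ (your $(\nu\Theta)^{\gamma-1}|x|^{\kappa}\geq\lambda(\beta-2-\nu)$ is the paper's inequality raised to the power $\gamma-1$), comparison, and $R_2\to\infty$ using $m(R)\to0$.

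However, there is a genuine gap in \textit{ii)}: your verification that the barrier is admissible rests on the exponent comparison $(\beta-2-\nu)(\gamma-1)<\kappa$, which, as you note, is \emph{equivalent to the strict inequality} $\gamma<\frac{\beta}{\beta-1}$. The lemma (and the sharp range in Theorem \ref{mainsum}) includes the borderline $\gamma=\frac{\beta}{\beta-1}$, where your lower bound $m(R_1)\gtrsim R_1^{-(\beta-2)}$ from part \textit{i)} only gives $(\nu\Theta_2)^{\gamma-1}R_1^{\kappa}\gtrsim c_0^{\gamma-1}$, a constant that need not dominate $\lambda(\beta-2-\nu)$, so the argument does not close. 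You flag this, but only sketch a fix, and the suggestion that a logarithmically corrected barrier is needed is not accurate: the paper closes the borderline case with the \emph{same} pure power barrier by a two-step self-improvement. First, since $\frac{2-\gamma}{\gamma-1}\geq\beta-2$, part \textit{i)} gives $m(R_1)R_1^{\frac{2-\gamma}{\gamma-1}}\geq m(R_1)R_1^{\beta-2}\geq m(1)>0$, and the admissibility constant $\lambda^{\frac{1}{\gamma-1}}(\beta-\nu-2)^{\frac{1}{\gamma-1}}/\nu$ can be made smaller than $m(1)/2$ by taking $\nu\in[\bar\nu,\beta-2)$ with $\bar\nu$ close to $\beta-2$; this yields monotonicity of $m(R)R^{\bar\nu}$ for large $R$. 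Second, one bootstraps: the improved bound $m(R_1)R_1^{\bar\nu}\geq m(1)$ gives $m(R_1)R_1^{\frac{2-\gamma}{\gamma-1}}\geq m(1)R_1^{\beta-2-\bar\nu}\to\infty$ as $R_1\to\infty$, which verifies the admissibility condition for \emph{every} $\nu\in(0,\beta-2)$ once $R_1$ is large, borderline exponent included. This is exactly the ``preliminary bootstrap'' you name but do not carry out; without it your proof covers only $\gamma<\frac{\beta}{\beta-1}$, whereas the case of equality is the one needed for the sharpness of the main theorem.
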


\begin{proof} The proof of the two items follow similar lines. We start from {\it i)}, and follow \cite[Theorem 4.1]{CLeoni}. Fix $0 < R_1 < R$, we claim that the function \[\Psi(x)=\frac{m(R_1)-m(R)}{R_1^{-(\beta-2)}-R^{-(\beta-2)}}(|x|^{-(\beta-2)}-R^{-(\beta-2)})+m(R)
\] satisfies in the classical sense the equality $\mathcal{P}^+_{\lambda,\Lambda}(D^2\Psi) = 0$ for $|x|>R_1$. Indeed, setting $\Theta = \frac{m(R_1)-m(R)}{R_1^{-(\beta-2)}-R^{-(\beta-2)}}$, by Lemma \ref{prepP} and the fact that $\Lambda(N-1)=\lambda(\beta-1)$ we obtain
\[
\mathcal{P}^+_{\lambda,\Lambda}(D^2\Psi)=-\lambda(\beta-2) \Theta(\beta-1)|x|^{-\beta}+(\beta-2) \Theta\Lambda|x|^{-\beta}(N-1) = 0\ .
\]
We then apply the comparison principle to the viscosity / classical inequalities $$\mathcal{P}^+_{\lambda,\Lambda}(D^2u)\geq u^q > \mathcal{P}^+_{\lambda,\Lambda}(D^2\Psi)$$ in the annulus $A_{R_1,R}=\{R_1<|x|<R\}$ (cf \cite[Proposition 5]{FQS}). Since $u\geq \Psi$ on $\partial A_{R_1,R}$ by construction we get $u\geq \Psi$ in $A_{R_1,R}$. Moreover, since $m(R) \ge 0$,
$$u(x)\geq \Psi(x) \geq \frac{m(R_1)\big(|x|^{-(\beta-2)}-R^{-(\beta-2)}\big)}{R_1^{-(\beta-2)}-R^{-(\beta-2)}},$$
and {\it i)} follows by letting $R \to \infty$.

\smallskip

The proof of {\it ii)} is a bit more delicate. As before, we need to show that 
\[\Psi(x)=\frac{m(R_1)-m(R)}{R_1^{-\nu}-R^{-\nu}}(|x|^{-\nu}-R^{-\nu})+m(R)
\]
is a classical subsolution of $\mathcal{P}^+_{\lambda,\Lambda}(D^2\Psi)  \leq |D\Psi|^\gamma$ in $A_{R_1,R}$. Setting $\Theta = \frac{m(R_1)-m(R)}{R_1^{-\nu}-R^{-\nu}}$ (which is positive, since $m(R)$ is decreasing), by the identity $|D\Psi|^\gamma=(\nu \Theta)^\gamma|x|^{-(\nu+1)\gamma}$ we have such an inequality provided that
\[
|x|^{-\nu-2}\nu \Theta[-\lambda(\nu+1)+\lambda(\beta-1)-(\nu \Theta)^{\gamma-1}|x|^{-(\nu+1)\gamma+\nu+2}]\leq 0\ ,
\]
namely
\[
\Theta |x|^{-\nu + \frac{2-\gamma}{\gamma-1} }\geq\frac{\lambda^{\frac{1}{\gamma-1}}(\beta-\nu-2)^{\frac{1}{\gamma-1}}}{\nu}.
\]
Under the condition $1<\gamma\le\frac{\beta}{\beta-1}$, it is immediate to verify that $\frac{2-\gamma}{\gamma-1}\ge\beta-2$, hence $-\nu + \frac{2-\gamma}{\gamma-1} > 0$. Recalling the definition of $\Theta$, then the previous inequality holds true whenever $|x| \ge R_1$ and
\begin{equation}\label{crucineq}
m(R_1)R_1^{\frac{2-\gamma}{\gamma-1}} \ge \big(1-(R_1/R)^{\nu}\big) \frac{\lambda^{\frac{1}{\gamma-1}}(\beta-\nu-2)^{\frac{1}{\gamma-1}}}{\nu} + m(R)R_1^{\frac{2-\gamma}{\gamma-1}} .
\end{equation}
On one hand, by {\it i)}
\[
m(R_1)R_1^{\frac{2-\gamma}{\gamma-1}} \ge m(R_1)R_1^{\beta - 2} \ge m(1)
\]
for all $R_1 \ge 1$. On the other hand, note first that we can assume $m(1) > 0$, otherwise by the strong maximum principle $u \equiv 0$ on $\R^N$ and the statement would be trivial. Since $m(R) \to 0$, $m(R)R_1^{\frac{2-\gamma}{\gamma-1}} \le \frac{m(1)}2 $ for $R$ large enough (depending on $R_1$ and $m(1)$) and thus
\begin{multline*}
\big(1-(R_1/R)^{\nu}\big) \frac{\lambda^{\frac{1}{\gamma-1}}(\beta-\nu-2)^{\frac{1}{\gamma-1}}}{\nu} + m(R)R_1^{\frac{2-\gamma}{\gamma-1}} \le\\ \frac{\lambda^{\frac{1}{\gamma-1}}(\beta-\nu-2)^{\frac{1}{\gamma-1}}}{\nu} + \frac{m(1)}2 \le m(1)
\end{multline*}
provided that $\nu \in [\bar \nu, \beta - 2)$, and $\bar \nu$ is sufficiently close to $\beta -2$ (depending on $\lambda, \gamma, m(1)$ only, and not on $R_1$). Therefore, \eqref{crucineq} is verified, and thus $\Psi$ is a subsolution as desired. Arguing by comparison as in {\it i)}, we obtain that $m(R)R^{\nu}$ is nondecreasing for $R \ge 1$ and such $\nu$.

To extend the statement to any $\nu \in (0, \beta - 2)$, note that
$$
m(R_1)R_1^{\frac{2-\gamma}{\gamma-1}} \geq m(R_1)R_1^{\bar \nu}R_1^{\beta - 2 - \bar \nu} \ge m(1) R_1^{\beta - 2 - \bar \nu} \to \infty \quad\text{as $R_1 \to \infty$,}
$$
and therefore \eqref{crucineq} is verified whenever $R_1$ is large enough (now depending on $\lambda, \gamma, m(1)$, $\beta, \nu$) and $R$ is large enough (depending on $R_1$). We proceed as before by comparison and let $R \to \infty$ to get the statement.

\end{proof}

\begin{lemma}\label{mbound}
Let $u$ be a nonnegative solution to the viscosity inequality $$\mathcal{P}^+_{\lambda,\Lambda}(D^2u) \geq u^q$$ in $\R^N$. Then, 
the following  bound from above holds
\begin{equation}\label{leo}
m(R)\leq CR^{-\frac{2}{q-1}}\ ,\ C>0\ .
\end{equation}
\end{lemma}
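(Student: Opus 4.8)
The plan is to derive the bound \eqref{leo} by a barrier argument on balls, comparing $u$ from below with an appropriate radial supersolution of the Pucci inequality $\mathcal{M}^+_{\lambda,\Lambda}(D^2 w)\geq w^q$ that is small at the center and forced to be large on a sphere of radius $R$. First I would recall from the introductory discussion (the threshold result behind \cite[Theorem 4.1]{CLeoni}) that, since $q > \frac{\beta}{\beta-2}$, the function $w(x) = c(1+|x|^2)^{-\frac{1}{q-1}}$ is, for a suitably large $c = c(\lambda,\Lambda,N,q) > 0$, a \emph{subsolution} of the reversed inequality; more precisely, by Lemma \ref{prepP} with $\delta = \frac{2}{q-1}$ one has
\[
\mathcal{M}^+_{\lambda,\Lambda}(D^2 w) \geq c\,\delta\lambda\,\frac{\beta - \delta - 2}{(1+|x|^2)^{\frac{\delta}{2}+1}},
\]
and since $q > \frac{\beta}{\beta-2}$ forces $\beta - \delta - 2 > 0$, one can choose $c$ so large that the right-hand side dominates $w^q = c^q(1+|x|^2)^{-\frac{q}{q-1}}$ everywhere (note $\frac{q}{q-1} = \frac{\delta}{2}+1$, so the two sides scale identically in $|x|$, and only a constant needs to be beaten — here $c^{q-1}$ large does it). Thus $w$ solves $\mathcal{M}^+_{\lambda,\Lambda}(D^2w) \geq w^q$ classically on all of $\R^N$.

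Next I would exploit the scaling invariance of the extremal operator and of the power nonlinearity: for $\mu > 0$ the rescaled function $w_\mu(x) := \mu^{\frac{2}{q-1}} w(\mu x)$ again satisfies $\mathcal{M}^+_{\lambda,\Lambda}(D^2 w_\mu) \geq w_\mu^q$. Fix $R > 0$ and consider the ball $B(0,R)$. The idea is to select $\mu = \mu(R)$ and a multiplicative constant so that the barrier, restricted to $\overline{B(0,R)}$, lies below $u$ on the boundary sphere $\partial B(0,R)$, where $u \geq m(R)$. Because $w$ is radially decreasing, on $\partial B(0,R)$ one has $w_\mu(x) = \mu^{\frac{2}{q-1}} c(1+\mu^2 R^2)^{-\frac{1}{q-1}}$; choosing, say, $\mu = 1/R$ makes this comparable to $c\, R^{-\frac{2}{q-1}} \cdot 2^{-\frac{1}{q-1}}$, a constant multiple of $R^{-\frac{2}{q-1}}$. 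One then argues: if $m(R)$ were \emph{larger} than this boundary value of $w_\mu$, then $u \geq w_\mu$ on $\partial B(0,R)$, and since $\mathcal{M}^+_{\lambda,\Lambda}(D^2 u) \geq u^q \geq \mathcal{M}^+_{\lambda,\Lambda}(D^2 w_\mu) - (w_\mu^q - u^q)$ — more cleanly, comparing the two via the fact that $z \mapsto z^q$ is monotone, one sets up a proper comparison for $\mathcal{M}^+_{\lambda,\Lambda}(D^2 v) - v^q$ which is a degenerate-elliptic proper operator — the comparison principle on $B(0,R)$ (cf.\ \cite[Proposition 5]{FQS}, \cite{CIL}) gives $u \geq w_\mu$ throughout $B(0,R)$, in particular at the origin; letting $R \to \infty$ and noting $w_\mu(0) \to 0$ would still be consistent, so the contradiction must be squeezed out more carefully — namely by iterating or by letting a second, interior sphere play the role that sends $m$ to zero too slowly. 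The cleaner route, which I would actually adopt, is: compare $u$ on the annulus or ball directly with the \emph{translated/rescaled} barrier centered so as to extract $m(2R) \le w_\mu(\text{point at distance } R) $, obtaining $m(R) \le C R^{-\frac{2}{q-1}}$ directly from one application of comparison, with no iteration.

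Concretely: since $\mathcal M^+_{\lambda,\Lambda}(D^2 u)\ge u^q$ and $u>0$, apply comparison on $B(x_0,R)$ for an arbitrary center $x_0$ between $u$ and $v(x):=\mu^{\frac{2}{q-1}}w(\mu(x-x_0))$ with $\mu$ chosen so that $v\le u$ on $\partial B(x_0,R)$: it suffices that $\sup_{\partial B(x_0,R)} v \le \inf_{\overline{B(x_0,R)}} u$; since $v$ is radial decreasing its boundary sup is $\mu^{\frac{2}{q-1}} c(1+\mu^2R^2)^{-\frac{1}{q-1}}$, which for the choice $\mu=1/R$ equals $c'R^{-\frac{2}{q-1}}$, and we want this $\le \min_{\overline B(x_0,R)} u$. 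If this inequality \emph{failed}, there would be nothing to prove since then $\min_{\overline B(x_0,R)}u < c' R^{-\frac 2{q-1}}$ already, and taking $x_0=0$ gives $m(R)\le c'R^{-\frac{2}{q-1}}$. If instead it \emph{holds}, comparison yields $u\ge v$ on $B(x_0,R)$, so in particular $u(x_0)\ge v(x_0)=\mu^{\frac{2}{q-1}}c = c\,R^{-\frac{2}{q-1}}$; but $x_0$ was arbitrary, so $\min_{|x|\le R}u \ge cR^{-\frac{2}{q-1}}$ — wait, this is a lower bound, not the desired upper bound, so this branch cannot persist for all $R$. Here is where the contradiction bites: combining with the growth forced on $u$ by $\mathcal M^+_{\lambda,\Lambda}(D^2u)\ge u^q>0$ and the monotonicity of $m(R)R^{\beta-2}$ from Lemma \ref{mRincr}(i) — which forces $m(R)\ge m(1)R^{-(\beta-2)}$ from below but also, crucially, when fed back into $u^q$ makes the right-hand side too big — so in fact the ``holds'' branch can occur for at most one value of $R$ (or a bounded set of $R$), because pushing $u(x_0)$ up everywhere contradicts $m(R)\to 0$, which is guaranteed... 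The cleanest statement: for every $R$ at least one of the two alternatives gives $m(R)\le CR^{-\frac{2}{q-1}}$, because if the ``holds'' branch persisted along a sequence $R_j\to\infty$ we would get $u\ge c\,R_j^{-\frac{2}{q-1}}$ at every point of $\R^N$... no — this still tends to $0$. I now believe the honest proof is the first, iteration-free version using \emph{two} radii: apply comparison on the annulus $A_{R,2R}$ between $u$ and a radial barrier $\Psi$ built from $w$ that is pinned below $m(R)$-type data on $\partial B(0,R)$ and whose decay to $\partial B(0,2R)$, measured against $w$'s algebraic decay, forces $m(2R) \le \Psi$ on the outer sphere to be of order $R^{-\frac 2{q-1}}$ times $m(R)$; rescaling-invariance then closes a recursion $m(2R)\le \theta\, m(R)$ with a harmless bookkeeping, yielding \eqref{leo}.

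The main obstacle, then, is precisely the bookkeeping in this comparison: choosing the barrier's parameters (the multiplicative constant $\Theta$ and the scale $\mu$) so that simultaneously (a) the ordering $u \geq \Psi$ holds on both boundary spheres $\partial B(0,R)$ and $\partial B(0,2R)$ — using only the already-known lower bound $u \ge m(R)$ on the inner sphere and $u\ge 0$ on the outer, (b) the reversed inequality $\mathcal{M}^+_{\lambda,\Lambda}(D^2 \Psi) \geq \Psi^q$ is genuinely satisfied by $\Psi$ on the annulus (this is where $q > \frac{\beta}{\beta-2}$, i.e.\ $\beta - \delta - 2 > 0$ with $\delta = \tfrac{2}{q-1}$, is indispensable, via Lemma \ref{prepP}), and (c) the resulting value of $\Psi$ at the origin is $\ge c R^{-\frac 2{q-1}}$, producing the asserted bound on $m(R)$ after letting an auxiliary radius go to infinity. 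All three are elementary once the scaling $\delta = \frac{2}{q-1}$, $\mu \sim 1/R$ is fixed — the powers match by design ($\frac{q}{q-1} = \frac{\delta}{2}+1$) — so the only real content is verifying the constant in (b) can be absorbed, which follows since $c^{q-1}$ is a free large parameter; I expect the write-up to be short, essentially Lemma \ref{prepP} plus comparison plus scaling, closely paralleling \cite[Theorem 4.1]{CLeoni}.
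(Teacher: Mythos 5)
There is a genuine gap: the mechanism you build on cannot produce an \emph{upper} bound on $m(R)$. Your barrier $w$ (and its rescalings $v$, $\Psi$) is arranged to satisfy the \emph{same} inequality as $u$, namely $\mathcal{M}^+_{\lambda,\Lambda}(D^2w)\geq w^q$, and every comparison you then set up is of the form ``$u\geq$ barrier on the boundary, hence $u\geq$ barrier inside''. First, that comparison is not available here: the operator $z\mapsto \mathcal{M}^+_{\lambda,\Lambda}(D^2z)-z^q$ is not proper (the zeroth-order term $-z^q$ is strictly decreasing in $z\geq 0$), which is exactly the reason comparison fails for $-\Delta u=u^q$ on large domains; the paper only ever invokes comparison for operators without such a term (Lemma \ref{mRincr} compares against $\mathcal{M}^+(D^2\Psi)=0$ or $\mathcal{M}^+(D^2\Psi)\leq|D\Psi|^\gamma$). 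Second, and more fundamentally, even granting comparison, a lower barrier can only give lower bounds on $u$; you notice this yourself (``this is a lower bound, not the desired upper bound''), your dichotomy's second branch produces no contradiction (a lower bound $u(x_0)\geq cR^{-2/(q-1)}$ is perfectly consistent with everything), and the closing ``two radii / recursion $m(2R)\leq\theta\,m(R)$'' sketch again rests on an $u\geq\Psi$ comparison, so it cannot control $m$ from above, nor would a geometric recursion by itself yield the precise power $R^{-2/(q-1)}$. Two smaller slips: to have $\mathcal{M}^+_{\lambda,\Lambda}(D^2w)\geq w^q$ for $w=c(1+|x|^2)^{-1/(q-1)}$ one needs $c^{q-1}\leq \delta\lambda(\beta-\delta-2)$ with $\delta=\tfrac{2}{q-1}$, i.e.\ $c$ \emph{small}, not ``$c^{q-1}$ large''; and your construction requires $q>\frac{\beta}{\beta-2}$, an assumption the lemma does not make (it only needs $q>1$).

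The paper's proof (following \cite[Theorem 4.1]{CLeoni}) uses the opposite mechanism: it plays the zeroth-order term against $u$ itself through the viscosity definition, not through comparison. The bounded test function $\varphi(x)=m(r)\bigl\{1-[(|x|-r)^+]^3/(R-r)^3\bigr\}$ touches $u$ from below at some $\bar x$ with $r<|\bar x|<R$, and the supersolution property gives $u^q(\bar x)\leq \mathcal{M}^+_{\lambda,\Lambda}(D^2\varphi(\bar x))\leq 3\Lambda(N+1)\,m(r)/(R-r)^2$, hence $m^q(R)\leq 3\Lambda(N+1)\,m(r)/(R-r)^2$: the upper bound comes precisely from the smallness of $D^2\varphi$ forcing $u^q$ to be small at the contact point. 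Combining this with the monotonicity of $R\mapsto m(R)R^{\beta-2}$ from Lemma \ref{mRincr}\,{\it i)} to replace $m(r)$ by $m(R)R^{\beta-2}/r^{\beta-2}$, and choosing $r=R/2$, yields \eqref{leo} with a universal constant. If you wish to keep a barrier flavour, the barrier must enter as a test function at a contact point (as above), not as a comparison function on a domain.
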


\begin{proof}

The result has been proved in \cite[Theorem 4.1]{CLeoni}. We sketch the proof here for the reader's convenience (in \cite{CLeoni} some additional weights in the problem appear, but we do not have them here). For fixed $R> r$, we consider the test function
\[
\varphi(x)=m(r)\left\{1-\frac{[(|x|-r)^+]^3}{(R-r)^3}\right\}\ .
\]
We observe that $\varphi\leq 0<u$ for $|x|\geq R$, while $\varphi(x)\equiv m(r)<u(x)$ for $|x|<r$. Since  $\varphi(x)=u(x)$ at least at one point verifying $|x|=r$, we conclude that $u-\varphi$ achieves the nonpositive minimum in $\R^N$ at a certain point $\bar x$ in $\{x\in\R^N:r\leq |x|<R\}$. 
Thus we can use $\varphi$ as a test function in the definition of viscosity solution to find
\[
\mathcal{P}^+_{\lambda,\Lambda}(D^2\varphi(\bar x))\geq u^q(\bar x)\ .
\]
By straightforward calculations as in Lemma \ref{prepP} we get
\[
\mathcal{P}^+_{\lambda,\Lambda}(D^2\varphi(\bar x))=\frac{3\Lambda m(r)}{(R-r)^3}\left[2+(N-1)\frac{(|x|-r)^+}{|x|}\right](|x|-r)^+
\]
so that
\[
\frac{3\Lambda m(r)}{(R-r)^3}\left[2+(N-1)\frac{(|\bar x|-r)^+}{|\bar x|}\right](|\bar x|-r)^+\geq u^q(\bar x)\ .
\]
If $|\bar x|=r$, then we would find $u(\bar x)=0$, which is a contradiction. So we may assume $r<|\bar x|<R$. In this case, we conclude
\[
u^q(\bar x)\leq \frac{3\Lambda(N+1)m(r)}{(R-r)^2}\ .
\]
We then exploit the fact that $u(\bar x)\geq \min_{r<|x|<R}u(x)\geq m(R)$ and get
\[
m^q(R)\leq \frac{3\Lambda(N+1)m(r)}{(R-r)^2}\text{ for }R> r\ .
\]
In view of Lemma \ref{mRincr} we use that $m(r)\leq \frac{m(R)R^{\beta-2}}{r^{\beta-2}}$ and conclude
\[
m^q(R)\leq \frac{3\Lambda(N+1)m(R)R^{\beta-2}}{r^{\beta-2}(R-r)^2}\text{ for }R> r\ .
\]
We then take $r = R/2$ to get
\[
m(R)\leq \frac{[3\Lambda(N+1)m(R)]^\frac1q2^{\frac{1}{q}+\frac{\beta-2}{q}}}{R^{\frac{2}{q}}}\ ,
\]
that yields \eqref{leo}.

\end{proof}

We are now ready to prove the main theorem of this section.

\begin{proof}[Proof of Theorem \ref{mainsum}]
Assume by contradiction that $u$ is not constant, so $u > 0$ on $\R^N$ by the strong minimum principle, and $m(R_1) > 0$ for all $R_1$. Since $u$ is a supersolution to \eqref{Fgrad+0}, then it is also a supersolution to the equation $\mathcal{P}^+_{\lambda,\Lambda}(D^2u)\geq u^q$ on the whole $\R^N$, and also to $\mathcal{P}^+_{\lambda,\Lambda}(D^2u)\geq|Du|^\gamma$. Therefore, by Lemma \ref{mbound} we have
 \begin{equation}\label{eq2134}
 m(R)\leq CR^{-\frac{2}{q-1}}.
 \end{equation}
 Note that this implies $m(R) \to 0$ as $R \to \infty$. Therefore, we can apply Lemma \ref{mRincr}, {\it ii)} to obtain for large $R > R_1$
 \[
 m(R) \ge m(R_1)R_1^{\nu} \, R^{-\nu} ,
 \]
 contradicting \eqref{eq2134} when $\nu < \frac2{q-1}$ and $R \to \infty$.

\end{proof}

Some remarks and open questions are now in order.

\begin{rem}\label{H1fail}
The result in Theorem \ref{mainsum} is sharp. Indeed, when $\gamma>\frac{\beta}{\beta-1}$ and $q>\frac{\beta}{\beta-2}$, one can easily construct a counterexample to the Liouville property for the equation
\[
\mathcal{P}^+_{\lambda,\Lambda}(D^2u)\geq u^q+ |Du|^\gamma\text{ in }\R^N
\]
of the form $v^\delta(x)=K_\delta(1+ |x|^2)^{-\frac{\delta}{2}}$ for some suitable $K_\delta>0$ provided that
\begin{equation}\label{assuz}
\max\left\{\frac{2}{q-1},\frac{2-\gamma}{\gamma-1}\right\}<\delta<\beta-2\ .
\end{equation}
Indeed, by Lemma \ref{prepP} we have
\begin{multline*}
\mathcal{P}^+_{\lambda,\Lambda}(D^2v^\delta)- (v^\delta)^q- |Dv^{\delta}|^\gamma\geq(1+|x|^2)^{-\frac{\delta}{2}-1}\big[ \lambda\delta K_\delta(\beta-\delta-2)- \\
-(K_\delta)^q(1+|x|^2)^{-\frac{\delta}{2} q + \frac{\delta}{2}+1}-(\delta K_\delta)^\gamma|x|^\gamma(1+|x|^2)^{-(\frac{\delta}{2}+1)\gamma + \frac{\delta}{2} + 1 }\big]\ ,
\end{multline*}
which is nonnegative whenever the inequalities in \eqref{assuz} are satisfied and $K_\delta$ is small enough.\\
Indeed, when $-\frac{\delta}{2} q + \frac{\delta}{2}+1<0$, i.e. $\delta>\frac{2}{q-1}$, we have
\[
-(K_\delta)^q(1+|x|^2)^{-\frac{\delta}{2} q + \frac{\delta}{2}+1}\geq -K_\delta^q.
\]
Moreover, using that $|x|^\gamma\leq (1+|x|^2)^\frac\gamma2$, we have $-(\frac{\delta}{2}+\frac12)\gamma + \frac{\delta}{2} + 1<0$ choosing $\delta>\frac{2-\gamma}{\gamma-1}$ and hence
\[
-(\delta K_\delta)^\gamma|x|^\gamma(1+|x|^2)^{-(\frac{\delta}{2}+1)\gamma + \frac{\delta}{2} + 1 }\geq -(\delta K_\delta)^\gamma (1+|x|^2)^{-(\frac{\delta}{2}+\frac12)\gamma + \frac{\delta}{2} + 1 }\geq -(\delta K_\delta)^\gamma.
\]
We are then left with the inequality
\begin{align*}
\mathcal{P}^+_{\lambda,\Lambda}(D^2v^\delta)- (v^\delta)^q&- |Dv^{\delta}|^\gamma\geq (1+|x|^2)^{-\frac{\delta}{2}-1}[\lambda\delta K_\delta(\beta-\delta-2)-(K_\delta)^q-(\delta K_\delta)^\gamma]\\
&=  K_\delta (1+|x|^2)^{-\frac{\delta}{2}-1}[\lambda\delta(\beta-\delta-2)-K_\delta^{q-1}-\delta^\gamma K_\delta^{\gamma-1}]\\
&\geq K_\delta (1+|x|^2)^{-\frac{\delta}{2}-1}[\lambda\delta(\beta-\delta-2)2-\max\{K_\delta^{q-1},\delta^\gamma K_\delta^{\gamma-1}\}].
\end{align*}
Therefore, if choose $\delta<\beta-2$ we are free to take $K_\delta$ small enough so that the quantity $\lambda\delta(\beta-\delta-2)-2\max\{K_\delta^{q-1},\delta^\gamma K_\delta^{\gamma-1}\}\geq0$.
\end{rem}

\begin{rem}\label{M-} The previous remark shows that the theorem is sharp whenever $F = \mathcal{P}^+_{\lambda,\Lambda}$. In turn, for other operators, one could have the same Liouville property for a wider range of exponents $q, \gamma$. The most favorable case is indeed when $F = \mathcal{P}^-_{\lambda,\Lambda}$. With this choice, one could first deduce the Liouville property for classical solutions from linear results involving the Laplacian, that have been obtained in \cite{QuaasJMPA,CaristiMitidieri}: since
$$
-\Lambda\Delta u \geq \mathcal{P}^-_{\lambda,\Lambda}(D^2u)\geq u^q+ |Du|^\gamma,
$$
then $u$ has to be constant whenever
\[
q>\frac{N}{N-2} \text{\, and \, } \gamma\leq\frac{N}{N-1} , \quad \text{(or\, $q\le \frac{N}{N-2}$ and any $\gamma>0$)} \ ,
\]
where $N>2$. However, this range can be further improved. Indeed, if $\mathcal{P}^-_{\lambda,\Lambda}(D^2u)\geq0$, arguing as in Lemma \ref{mRincr} one can prove that $R\mapsto m(R)R^{\alpha-2}$ is nondecreasing, where $\alpha=\frac{\lambda}{\Lambda}(N-1)+1$. Then, one can retrace the same proof of Theorem \ref{mainsum} replacing $\beta$ with $\alpha$, and obtain the result when 
\[
q>\frac{\alpha}{\alpha-2} \text{\, and \, } \gamma\leq\frac{\alpha}{\alpha-1} , \quad {\text{(or\, $q\le \frac{\alpha}{\alpha-2}$ and any $\gamma>0$)}} \ ,
\]
which is even a larger region for $(q,\gamma)$ since $\frac{\alpha}{\alpha-1} > \frac{N}{N-1} > \frac{\beta}{\beta-1} $ (at least when $\Lambda>\lambda$).

\end{rem}


\begin{rem}
One can generalize Theorem \ref{mainsum} to fully nonlinear equations of the form
\[
F(x,D^2u)\geq H(x,u,Du)\text{ in }\R^N\ ,
\]
with
\[
H(x,u,Du)\geq a(x)u^q+b(x)|Du|^\gamma
\]
where e.g. $a(x)=|x|^{-\sigma}$, $\sigma>-2$, and $b(x)\geq b_0$. In this case the range of exponents in Theorem \ref{mainsum} becomes $\gamma\leq \frac{\beta}{\beta-1}$ and $q>\frac{\beta+\sigma}{\beta-2}$ (the case $q\leq \frac{\beta+\sigma}{\beta-2}$ being a consequence of \cite[Theorem 4.1]{CLeoni}).\\
One can even replace $u^q$ with $f=f(u)$, where $f:(0,\infty)\to\R$ is a continuous function verifying
\[
\liminf_{s\to0}\frac{f(s)}{s^q}>0\ .
\]
Finally, we believe that the result in Theorem \ref{mainsum} can be generalized to exterior domains of $\R^N$, i.e. when the equation is posed on $\R^N\backslash B$ where $B\subset\R^N$ is any ball, $N>2$, following a similar scheme of proof. Nevertheless, some arguments become more delicate; for example, it is no longer true in general that $\min_{R_0 \le |x|\leq R}u(x)=\min_{|x|=R}u(x)$ on exterior domains. This will be the matter of a future research, together with the treatment of a general case, as summarized in the next open problem.
\end{rem}

\begin{opp}
Do the results in Theorem \ref{mainsum} extend to general Bellman-Isaacs equations as in \cite{ArmstrongCPAM}, that is, of the form $\sup_\sigma\inf_\eta \mathcal{L}^{\sigma\eta}u=0$, where $\mathcal{L}^{\sigma\eta}$ is a family of linear uniformly elliptic operators, and to exterior domains? What happens when the gradient term in \eqref{Fgrad+0} has the opposite sign as in \cite{QuaasJMPA}, i.e. $H(u,Du)=u^q-|Du|^\gamma$?
\end{opp}

Similar Liouville properties to those outlined in this section have been recently addressed in \cite{Barrios} for problems driven by fractional Laplacians. We now formulate the following
\begin{opp}
Do the results of this section extend to problems driven by fully nonlinear nonlocal operators as in \cite{FQadv}, namely for these problems involving
\[
\mathcal{P}^+u=\sup_{K\in\mathcal{F}}-\int_{\R^N}(u(x+y)+u(x-y)-2u(x))K(y)\,dy\ ,\\
\]
\[
\mathcal{P}^-u=\inf_{K\in\mathcal{F}}-\int_{\R^N}(u(x+y)+u(x-y)-2u(x))K(y)\,dy\ ,\\
\]
where $\mathcal{F}=\left\{K:\frac{\lambda}{|y|^{N+2s}}\leq K(y)\leq\frac{\Lambda}{|y|^{N+2s}}\ ,K(y)=K(-y) \right\}$, $\Lambda\geq\lambda>0$, $s\in(0,1)$?
\end{opp}

\section{Failure of the Liouville property for nonlinearities involving the product of powers of the unknown and its gradient}\label{sec;conj}
We address in this section the failure of the Liouville property for inequalities involving {\it products} of zero-th and first order nonlinearities, that is,
\[
F(x,D^2u)\geq u^q|Du|^\gamma \quad \text{ in }\R^N, \ q\geq0\ ,\gamma>1\ .
\]
with $F$ continuous, uniformly elliptic and satisfying $F(x,0)=0$. Nonlinearities of the form $u^q|Du|^\gamma$ appear, for example, after a change of variable in the generalized porous-medium equations (see e.g. the introduction in \cite{Chang}). By the uniform ellipticity, any solution to the above inequality satisfies in the viscosity sense
\begin{equation}\label{P+graduq}
\mathcal{P}^+_{\lambda,\Lambda}(D^2u)\geq u^q|Du|^\gamma \quad \text{ in }\R^N, \ q\geq0\ ,\gamma>1\ .
\end{equation}
The validity of the Liouville property in the linear case $F=-\Delta$ has been shown in  \cite{CaristiMitidieri, MelianDCDS}, and requires $(N-1)\gamma+(N-2)q\leq N$. For the nonlinear counterpart, we state the following necessary condition.
\begin{thm}\label{countexq}
Let $N>2$. Then, there exist non-constant nonnegative classical solutions to the inequality \eqref{P+graduq} when $(\beta-1)\gamma+(\beta-2)q>\beta$, where $\beta=\frac{\Lambda}{\lambda}(N-1)+1$. In particular, when $q=0$ there exist non-constant entire solutions to $\mathcal{P}^+_{\lambda,\Lambda}(D^2u)\geq |Du|^\gamma$ in $\R^N$ when $\gamma>\frac{\beta}{\beta-1}$.
\end{thm}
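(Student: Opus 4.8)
The plan is to exhibit an explicit radial supersolution and reduce everything to the algebraic computation already packaged in Lemma~\ref{prepP}. Since the claimed range $(\beta-1)\gamma+(\beta-2)q\leq\beta$ is the ``boundary'' counterpart of the counterexample range in Remark~\ref{H1fail}, I expect the natural ansatz to be the same, namely $u(x)=K(1+|x|^2)^{-\delta/2}$ for a suitable exponent $\delta>0$ and a suitable constant $K>0$. First I would plug this ansatz into \eqref{P+graduq} and use Lemma~\ref{prepP}: we have the lower bound
\[
\mathcal{M}^+_{\lambda,\Lambda}(D^2u)\geq K\delta\lambda\,\frac{\beta-\delta-2}{(1+|x|^2)^{\frac{\delta}{2}+1}},
\]
while on the right-hand side $u^q|Du|^\gamma = K^{q+\gamma}\delta^\gamma |x|^\gamma (1+|x|^2)^{-\frac{\delta}{2}q-(\frac{\delta}{2}+1)\gamma}$. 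After dividing through by $(1+|x|^2)^{-\frac{\delta}{2}-1}$, the desired inequality $\mathcal{M}^+_{\lambda,\Lambda}(D^2u)\geq u^q|Du|^\gamma$ reduces to
\[
K\delta\lambda(\beta-\delta-2)\;\geq\;K^{q+\gamma}\delta^\gamma\,\frac{|x|^\gamma}{(1+|x|^2)^{\frac{\gamma}{2}}}\,(1+|x|^2)^{-\frac{\delta}{2}(q+\gamma-1)+1-\frac{\gamma}{2}+\frac{\gamma}{2}}
\]
for all $x$; more precisely one checks that the exponent of $(1+|x|^2)$ collected on the right is $1-\frac{\delta}{2}(q+\gamma)-\frac{\gamma}{2}+\frac{\delta}{2}+\frac{\gamma}{2}=1-\frac{\delta}{2}(q+\gamma-1)$ up to the harmless bounded factor $|x|^\gamma(1+|x|^2)^{-\gamma/2}\leq 1$.

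The key step is then the choice of $\delta$. One needs simultaneously $\delta<\beta-2$ (so the left-hand constant $K\delta\lambda(\beta-\delta-2)$ is strictly positive), and the exponent of $(1+|x|^2)$ on the right-hand side to be $\leq 0$, i.e. $1-\frac{\delta}{2}(q+\gamma-1)\leq 0$, equivalently $\delta\geq \frac{2}{q+\gamma-1}$ (note $q+\gamma-1>0$ since $\gamma>1$). Such a $\delta$ exists precisely when $\frac{2}{q+\gamma-1}<\beta-2$, that is when $(\beta-2)(q+\gamma-1)>2$; a short rearrangement shows this is exactly $(\beta-2)q+(\beta-2)\gamma>\beta$, which is implied by — actually, I should be careful here: the hypothesis is $(\beta-1)\gamma+(\beta-2)q\leq\beta$, the \emph{complement} of the conjectured Liouville range. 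So for Theorem~\ref{countexq} the correct reading is that we want a counterexample whenever the Liouville range is violated, i.e. whenever $(\beta-1)\gamma+(\beta-2)q>\beta$; I would state and prove the theorem in that form (matching the displayed ``$q=0$'' case $\gamma>\frac{\beta}{\beta-1}$, which indeed says $(\beta-1)\gamma>\beta$). Under $(\beta-1)\gamma+(\beta-2)q>\beta$ one has $\frac{2}{q+\gamma-1}<\beta-2$ after the rearrangement, so one can fix $\delta\in\big(\frac{2}{q+\gamma-1},\,\beta-2\big)$.

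With $\delta$ so fixed, the remaining task is to absorb the constant: the inequality to be verified becomes $K\delta\lambda(\beta-\delta-2)\geq K^{q+\gamma}\delta^\gamma\cdot(\text{bounded function of }x\text{ that is }\leq 1)$, so it suffices that $K\delta\lambda(\beta-\delta-2)\geq K^{q+\gamma}\delta^\gamma$. If $q+\gamma>1$ (always true here) this holds for all $K$ small enough, since the right-hand side is of higher order in $K$; pick such a $K>0$. Then $u(x)=K(1+|x|^2)^{-\delta/2}$ is a nonnegative, non-constant, smooth (hence classical, hence viscosity) solution of \eqref{P+graduq}, and therefore also of $F(x,D^2u)\geq u^q|Du|^\gamma$ by uniform ellipticity and $F(x,0)=0$, which is exactly what is claimed; the special case $q=0$, $\gamma>\frac{\beta}{\beta-1}$ is recovered by taking $\delta\in\big(\frac{2}{\gamma-1},\beta-2\big)$, which is nonempty precisely because $\gamma>\frac{\beta}{\beta-1}$ forces $\frac{2}{\gamma-1}<\beta-2$. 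The only mildly delicate point is bookkeeping the exponents of $(1+|x|^2)$ correctly and confirming the equivalence between ``$\exists\,\delta\in(\frac{2}{q+\gamma-1},\beta-2)$'' and the stated algebraic condition on $(q,\gamma,\beta)$; there is no analytic obstacle, as everything is an explicit radial computation reusing Lemma~\ref{prepP}.
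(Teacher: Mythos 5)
Your overall strategy is exactly the paper's: the same ansatz $u(x)=K(1+|x|^2)^{-\delta/2}$, the lower bound from Lemma~\ref{prepP}, a choice of $\delta$ in an interval below $\beta-2$, and then $K$ small to absorb the constant; your observation that the stated condition should read $(\beta-1)\gamma+(\beta-2)q>\beta$ (the complement of the conjectured Liouville range) is also correct and consistent with what the paper actually proves. However, there is a genuine error in your exponent bookkeeping, and it changes the range of validity. After dividing by $(1+|x|^2)^{-\frac{\delta}{2}-1}$ and extracting the bounded factor $|x|^\gamma(1+|x|^2)^{-\gamma/2}\le 1$, the exponent of $(1+|x|^2)$ on the right is
\[
-\tfrac{\delta}{2}q-\bigl(\tfrac{\delta}{2}+1\bigr)\gamma+\tfrac{\delta}{2}+1+\tfrac{\gamma}{2}
=1-\tfrac{\gamma}{2}-\tfrac{\delta}{2}(q+\gamma-1),
\]
not $1-\frac{\delta}{2}(q+\gamma-1)$ as you wrote (you dropped a $\gamma/2$: the zeroth-order part of $-(\frac{\delta}{2}+1)\gamma$ is $-\gamma$, not $-\gamma/2$). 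Hence the correct requirement is $\delta\ge\frac{2-\gamma}{q+\gamma-1}$, and the correct nonemptiness condition for $\delta\in\bigl[\frac{2-\gamma}{q+\gamma-1},\beta-2\bigr)$ is precisely $(\beta-2)q+(\beta-1)\gamma>\beta$, which is what the paper uses (for $q=0$ it takes $\delta=\frac{2-\gamma}{\gamma-1}$).

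With your threshold $\frac{2}{q+\gamma-1}$ the construction only works when $(\beta-2)(q+\gamma)>\beta$, a strictly stronger condition, so you miss part of the claimed range; and your final assertion that $\gamma>\frac{\beta}{\beta-1}$ forces $\frac{2}{\gamma-1}<\beta-2$ is false. Concretely, for $\lambda=\Lambda$, $N=\beta=4$, $q=0$, $\gamma=\frac32$ one has $\gamma>\frac{\beta}{\beta-1}=\frac43$, yet $\frac{2}{\gamma-1}=4>2=\beta-2$, so your interval for $\delta$ is empty, whereas the corrected threshold gives $\frac{2-\gamma}{\gamma-1}=1<2$ and the construction goes through. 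Once this slip is repaired the proof coincides with the paper's. A minor side remark: your closing claim that the constructed $u$ also satisfies $F(x,D^2u)\ge u^q|Du|^\gamma$ for a general uniformly elliptic $F$ goes in the wrong direction (uniform ellipticity with $F(x,0)=0$ gives $F(x,M)\le\mathcal{M}^+_{\lambda,\Lambda}(M)$, so $\mathcal{M}^+\ge H$ does not imply $F\ge H$); it is not needed, since \eqref{P+graduq} is exactly the $\mathcal{M}^+$ inequality.
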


\begin{proof}[Proof of Theorem \ref{countexq}]

Consider $v(x)=C_\delta(1+|x|^2)^{-\frac{\delta}{2}}$ for $\delta,C_\delta>0$ to be determined. 
By Lemma \ref{prepP} we have
\[
\mathcal{P}^+_{\lambda,\Lambda}(D^2v)-v^q|Dv|^\gamma\geq C_\delta\delta\lambda \frac{\beta-\delta-2}{(1+|x|^2)^{\frac{\delta}{2}+1}}-\frac{(C_\delta\delta)^\gamma}{(1+|x|^2)^{(\frac{\delta}{2}+1)\gamma-\frac{\gamma}{2}}}\frac{C_\delta^q}{(1+|x|^2)^{\frac{\delta}{2}q}}
\]
The right-hand side of the above inequality is bigger than or equal to zero when
\[
\begin{cases}
\left(\frac{\delta}{2}+1\right)\leq\left(\frac{\delta}{2}+1\right)\gamma-\frac{\gamma}{2}+\frac{\delta}{2}q\\
\beta>\delta+2\\
\lambda\delta C_\delta(\beta-\delta-2)-(C_\delta\delta)^\gamma C_\delta^q\geq0\ .
\end{cases}
\]
In particular, coupling the first two constraints it is easy to see that one gets $(\beta-2)q+(\beta-1)\gamma>\beta$, while the fact that $\beta-\delta-2>0$ allows to choose a suitable (small) positive constant $C_\delta>0$. For example, when $q=0$ the equality in the first constraint is attained e.g. for $\delta=\frac{2-\gamma}{\gamma-1}$, while the second inequality gives $\gamma > \frac{\beta}{\beta-1}$, so that the function $v(x)=C_\delta (1+|x|^2)^{-\frac{2-\gamma}{2(\gamma-1)}}$ works as a counterexample for the Liouville property for $C_\delta$ small enough (so that the third inequality is satisfied).
\end{proof}

\begin{rem}
The same kind of function serves as a counterexample for equations posed on exterior domains of $\R^N$. It is straightforward to check that the (singular) function $v(x)=c|x|^{-\frac{2-\gamma}{\gamma-1}}$ in $\R^N\backslash\{0\}$ is a classical solution to $\mathcal{P}^+_{\lambda,\Lambda}(D^2v)\geq |Dv|^\gamma$ in $\R^N\backslash \{0\}$ for $\gamma>\frac{\beta}{\beta-1}$.
\end{rem}

We now formulate the following conjecture, that extends \cite[Theorem 7.1]{CaristiMitidieri}, \cite[Corollary 3]{MelianDCDS} and \cite[Theorem 2.1]{VeronDuke} to the fully nonlinear setting.
\begin{conj}\label{conjprod}
Any nonnegative solution to $\mathcal{P}^+_{\lambda,\Lambda}(D^2u)\geq u^q|Du|^\gamma$ in $\R^N$ is constant provided that $(\beta-2)q+(\beta-1)\gamma\leq \beta$, $\gamma>1$, $q\geq0$. This would agree with the linear case, since $\beta\to N$ when $\lambda / \Lambda \to 1$. 

Note that by a suitable change of variables, the problem can be reduced to the case $q = 0$. The heuristic calculation goes as follows (we proceed as in \cite[Theorem 2.1]{VeronDuke}): under the assumption
\begin{equation}\label{bqg}
(\beta-2)q+(\beta-1)\gamma<\beta
\end{equation}
we let $u=v^b$, for some $b\in\R$ satisfying $b(b-1)>0$ to be later determined. Moreover, to simplify the calculation, suppose that $u,v$ are smooth functions. We have
\[
Du=bv^{b-1}Dv\ ,|Du|^2=b^2v^{2(b-1)}|Dv|^2\ ,|Du|^\gamma=b^\gamma v^{\gamma(b-1)}|Dv|^\gamma
\]
and
\[
D^2u=b(b-1)v^{b-2}Dv\otimes Dv+bv^{b-1}D^2v\ .
\]
We then conclude by the inequality $\mathcal{P}^+_{\lambda,\Lambda}(M+N)\leq \mathcal{P}^+_{\lambda,\Lambda}(M)+\mathcal{P}^+_{\lambda,\Lambda}(N)$ that
\begin{align*}
\mathcal{P}^+_{\lambda,\Lambda}(D^2u)&=\mathcal{P}^+_{\lambda,\Lambda}(b(b-1)v^{b-2}Dv\otimes Dv+bv^{b-1}D^2v)\\
&\leq \mathcal{P}^+_{\lambda,\Lambda}(b(b-1)v^{b-2}Dv\otimes Dv)+\mathcal{P}^+_{\lambda,\Lambda}(bv^{b-1}D^2v)\\
&=-b(b-1)v^{b-2}\lambda |Dv|^2+bv^{b-1}\mathcal{P}^+_{\lambda,\Lambda}(D^2v)\ .
\end{align*}
From the inequality $\mathcal{P}^+_{\lambda,\Lambda}(D^2u)\geq u^q|Du|^\gamma$ we then get
\[
b\mathcal{P}^+_{\lambda,\Lambda}(D^2v)\geq \lambda b(b-1)\frac{|Dv|^2}{v}+b^\gamma v^s|Dv|^\gamma
\]
for $s=1-\gamma+b(q+\gamma-1)$. We now use the generalized Young's inequality ($s$ will be positive, see below) to find that
\[
|Dv|^{\frac{2s+\gamma}{s+1}}=\left(\frac{|Dv|^2}{v}\right)^\frac{s}{s+1}v^{\frac{s}{s+1}}|Dv|^{\frac{\gamma}{s+1}}\leq \eps^{\frac{s+1}{s}}\frac{|Dv|^2}{v}+\frac{1}{\eps^{s+1}}|Dv|^\gamma v^s
\]
for any $\eps>0$. We choose $\eps^{-1-s}=b^q$ to find that for $c>0$ satisfying $c^{\frac{s+1}{s}}=\lambda b^{1-\frac{\gamma}{s}}(b-1)$ it holds
\begin{equation}\label{reduced}
\mathcal{P}^+_{\lambda,\Lambda}(D^2v)\geq c|Dv|^z\text{ in }\R^N\ ,
\end{equation}
where
\[
z=\frac{2s+\gamma}{s+1}=\frac{2-\gamma+2b(q+\gamma-1)}{2-\gamma+b(q+\gamma-1)}\ .
\]
For $\gamma+q-1>0$, we take $b=1+\delta$, $\delta>0$, so that $b(b-1)>0$. Then, we observe that $s=\gamma+\delta(\gamma+q-1)>0$ and $s=1-\gamma+\delta(q+\gamma-1)>1-\gamma$, which implies $z>1$. By the assumption \eqref{bqg} we can take $\delta$ small enough to find $(\beta-2)s+(\beta-1)\gamma<\beta$. Indeed
\begin{multline*}
(\beta-2)s+(\beta-1)\gamma<(\beta-2)s+\beta-(\beta-2)q\\ =(\beta-2)(s-q)+\beta=(\beta-2)(q+\gamma-1)(\delta-1)+\beta\ ,
\end{multline*}
which is equivalent to
\begin{equation}\label{condz}
z<\frac{\beta}{\beta-1}\ ,
\end{equation}
since
\begin{multline*}
(\beta-2)s+(\beta-1)\gamma<\beta\implies 2s(\beta-1)+(\beta-1)\gamma<\beta(s+1)\\
\implies (2s+\gamma)(\beta-1)<\beta(s+1)\ .
\end{multline*}

Therefore, the Liouville property for \eqref{P+graduq} reduces to the one for \eqref{reduced}, which we conjecture to be true whenever \eqref{condz} holds.

A similar transformation can be carried out for the equation $\mathcal{P}^-_{\lambda,\Lambda}(D^2u)\geq u^q|Du|^\gamma$, by exploiting the inequality $\mathcal{P}^-_{\lambda,\Lambda}(M+N)\leq \mathcal{P}^-_{\lambda,\Lambda}(M)+\mathcal{P}^+_{\lambda,\Lambda}(N)$. Then, condition \eqref{bqg} becomes $(\alpha-2)q+(\alpha-1)\gamma<\alpha$, yielding $z<\frac{\alpha}{\alpha-1}$.

\end{conj}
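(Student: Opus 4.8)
The plan is to follow the scheme of Section~\ref{sec;sum}, after the change of variables sketched above. \emph{Reduction to $q=0$.} Given a nonnegative classical solution $u$ of \eqref{P+graduq} with $(\beta-2)q+(\beta-1)\gamma\le\beta$ (the viscosity case requiring the usual care with the substitution), set $u=v^{1+\delta}$ with $\delta>0$ small and use $\mathcal{M}^+_{\lambda,\Lambda}(M+N)\le\mathcal{M}^+_{\lambda,\Lambda}(M)+\mathcal{M}^+_{\lambda,\Lambda}(N)$ together with the generalized Young inequality exactly as in the computation leading to \eqref{reduced}; since $\gamma+q-1>0$, this yields a nonnegative $v$ with $\mathcal{M}^+_{\lambda,\Lambda}(D^2v)\ge c|Dv|^{z}$ in $\R^N$, $c>0$, $1<z\le\frac{\beta}{\beta-1}$, and $c$ may be normalized to $1$ by replacing $v$ with $c^{1/(z-1)}v$. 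The $\mathcal{M}^-_{\lambda,\Lambda}$ case is identical, using $\mathcal{M}^-_{\lambda,\Lambda}(M+N)\le\mathcal{M}^-_{\lambda,\Lambda}(M)+\mathcal{M}^+_{\lambda,\Lambda}(N)$ and $\alpha$ in place of $\beta$. Hence it suffices to show that every nonnegative solution of $\mathcal{M}^+_{\lambda,\Lambda}(D^2v)\ge |Dv|^z$ in $\R^N$ with $1<z\le\frac{\beta}{\beta-1}$ is constant.

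\emph{Setting up the reduced problem.} Suppose $v$ is not constant; by the strong minimum principle $v>0$ on $\R^N$, and since $v-\inf_{\R^N}v\ge0$ solves the same inequality we may assume $\inf_{\R^N}v=0$, so $m(R):=\min_{|x|\le R}v\searrow0$. Lemma~\ref{mRincr} then applies verbatim: item i) (as $\mathcal{M}^+_{\lambda,\Lambda}(D^2v)\ge0$) gives that $R\mapsto m(R)R^{\beta-2}$ is nondecreasing, while item ii) (as $m(R)\to0$ and $z\le\frac{\beta}{\beta-1}$) gives that $R\mapsto m(R)R^{\nu}$ is nondecreasing for every $\nu\in(0,\beta-2)$ and $R$ large, hence $m(R)\ge c_\nu R^{-\nu}$ for large $R$, for each such $\nu$. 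The argument would then close if one had a matching \emph{upper} bound $m(R)\le CR^{-\nu_1}$ for some $\nu_1>0$: choosing $\nu\in(0,\min\{\nu_1,\beta-2\})$ would give $c_\nu R^{-\nu}\le CR^{-\nu_1}$, impossible as $R\to\infty$. Equivalently, one must rule out $m(R)R^{\beta-2}\to\infty$, i.e. show that a non-constant $v$ cannot decay arbitrarily slowly.

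\emph{The obstruction.} Producing this upper bound is exactly where the method of Section~\ref{sec;sum} has no counterpart. Lemma~\ref{mbound} gets $m(R)\le CR^{-2/(q-1)}$ by testing with a cubic cutoff $\varphi$ touching $v$ from below at some $\bar x$, $r<|\bar x|<R$, so that $\mathcal{M}^+_{\lambda,\Lambda}(D^2\varphi(\bar x))\ge v(\bar x)^q\ge m(R)^q$; with right-hand side $|Dv|^z$ the inequality at $\bar x$ only gives $\mathcal{M}^+_{\lambda,\Lambda}(D^2\varphi(\bar x))\ge |D\varphi(\bar x)|^z$, which involves $m(r)$ but nothing about $v(\bar x)$ — hence nothing about $m(R)$ — precisely because the superlinear gradient term vanishes where $Dv=0$. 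I expect this to be the main obstacle, and I see three plausible routes. \textbf{(i)} Since $N\ge3$ forces $\beta>2$, hence $z\le\frac{\beta}{\beta-1}<2$, one may attempt a Bernstein/Ishii--Lions interior gradient estimate for supersolutions of $\mathcal{M}^+_{\lambda,\Lambda}(D^2v)\ge |Dv|^z$: the coercive gradient term should give $\sup_{B_{R/2}}|Dv|\le C\,R^{-1}\sup_{B_R}v+(\text{lower order terms})$, with scaling such that, provided $v$ is bounded on $\R^N$, letting $R\to\infty$ forces $Dv\equiv0$; the delicate points are extending such an estimate from equations to mere inequalities, and proving that a nonnegative $\mathcal{M}^+_{\lambda,\Lambda}$-superharmonic function on $\R^N$ is bounded, which should follow from the potential theory developed in \cite{ArmstrongCPAM,ASannali}. \textbf{(ii)} One may couple the inequality with a quantitative Hopf-type bound $|Dv|\gtrsim R^{-1}\big(m(R/2)-m(R)\big)$ at a point of $\partial B_R$ where $v=m(R)$, obtained by comparison with the $\mathcal{M}^+_{\lambda,\Lambda}$-harmonic barrier $|x|^{-(\beta-2)}-R^{-(\beta-2)}$; the issue is to control $\mathcal{M}^+_{\lambda,\Lambda}(D^2v)$ from above there. \textbf{(iii)} One may build radial Keller--Osserman barriers in the spirit of \cite{FQS}, i.e. supersolutions of $\mathcal{M}^+_{\lambda,\Lambda}(D^2W)=|DW|^z$ on exterior domains decaying exactly at the critical rate $|x|^{-(\beta-2)}$ permitted by $z\le\frac{\beta}{\beta-1}$, and squeeze $m(R)$ between them. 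Any of these, combined with the first two paragraphs, would settle the conjecture; the absence of this upper bound is exactly what keeps the statement open.
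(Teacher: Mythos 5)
Keep in mind that the statement you were given is a \emph{conjecture}: the paper offers no proof of the Liouville property for \eqref{P+graduq}, only the heuristic change of variables reducing it to the case $q=0$, and explicitly defers the reduced problem ($\mathcal{M}^+_{\lambda,\Lambda}(D^2v)\geq c|Dv|^z$ with $z\le\frac{\beta}{\beta-1}$) to future work. Your reduction step is essentially identical to the paper's: the same substitution $u=v^{1+\delta}$, the same subadditivity inequality for $\mathcal{M}^+_{\lambda,\Lambda}$, the same Young-type absorption, and the same computation of $z$; your normalization of $c$ to $1$ by scaling is also fine. Your subsequent analysis is a genuine and correct addition: you rightly observe that Lemma \ref{mRincr} transfers to the reduced inequality (after subtracting $\inf v$ so that $m(R)\to0$) and yields only the \emph{lower} bound $m(R)\gtrsim R^{-\nu}$, while the analogue of Lemma \ref{mbound} fails because at the touching point the viscosity inequality only produces $|D\varphi(\bar x)|^z$, which carries no information on $v(\bar x)$, i.e.\ on $m(R)$ — this is indeed the precise reason the zero-order term was essential in Section \ref{sec;sum}, and your conclusion that the missing upper decay bound is the open core of the conjecture agrees with the paper's own position.

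One inaccuracy to fix: you claim that the reduction under $(\beta-2)q+(\beta-1)\gamma\le\beta$ produces $1<z\le\frac{\beta}{\beta-1}$. The change of variables requires $b=1+\delta$ with $\delta>0$, and in the borderline case $(\beta-2)q+(\beta-1)\gamma=\beta$ one gets $(\beta-2)s+(\beta-1)\gamma=\beta+\delta(\beta-2)(q+\gamma-1)>\beta$ for every $\delta>0$, i.e.\ $z>\frac{\beta}{\beta-1}$ strictly; only the strict inequality \eqref{bqg} reduces to $z<\frac{\beta}{\beta-1}$, which is exactly why the paper states the reduction under \eqref{bqg} rather than under the closed condition. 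So even granting a Liouville theorem for the reduced problem in the full range $z\le\frac{\beta}{\beta-1}$, the endpoint of the conjectured region would need a separate argument (or a limiting procedure) that your plan does not supply. Apart from this, your three suggested routes for the missing upper bound are speculative but reasonable; none is carried out, so the proposal — as you yourself state — does not settle the conjecture, which is consistent with its status in the paper.
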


\begin{rem}
Following Remark \ref{M-}, when $F=\mathcal{P}^-$, one can obtain the Liouville property for nonnegative solutions to the viscosity inequality
\[
\mathcal{P}^-_{\lambda,\Lambda}(D^2u)\geq u^q|Du|^\gamma\text{ in }\R^N
\]
from the known results for the Laplacian \cite[Theorem 7.1]{CaristiMitidieri}, \cite[Theorem 2.1]{VeronDuke}, \cite[Corollary 3]{MelianDCDS} via the inequality $-\Lambda\Delta u\geq \mathcal{P}^-_{\lambda,\Lambda}(D^2u)$ when $(N-2)q+(N-1)\gamma\leq N$. However, in this case one should expect the Liouville property when $(\alpha-2)q+(\alpha-1)\gamma\leq \alpha$, $\alpha=\frac{\lambda}{\Lambda}(N-1)+1$, which yields a wider region than $(N-2)q+(N-1)\gamma\leq N$.
\end{rem}

\begin{rem}
It is worth remarking that related Liouville properties to those in this section were established in \cite[Theorem 3.1]{BirDem}, where it was proved that if $u\in C(\R^N)$ is a nonnegative viscosity solution (in an appropriate sense) of
\[
\mathcal{P}^+_{\lambda,\Lambda}(D^2u)|Du|^\eta\geq h(x)u^q\ ,
\]
where
\[
\eta>-1\ ,h(x)=h|x|^\sigma\text{ for $|x|$ large}, h>0\text{ and }\sigma>-(\eta+2)\ ,
\]
and 
\[
0<q\leq\frac{1+\sigma+(\eta+1)(\beta-1)}{\beta-2}\ ,
\]
then $u\equiv0$. This is accomplished by modifying the proof in Lemma \ref{mbound} to achieve the bound $m(R)R^{\beta-2}\leq CR^{\beta-2-\frac{\eta+2+\sigma}{q-(1+\eta)}}$ through a different choice of the test function $\varphi$, which leads to the desired contradiction since $\beta-2-\frac{\eta+2+\sigma}{q-(1+\eta)}<0$.\\
Therefore, if one takes $\sigma=0$, $\gamma:=-\eta<1$, it is possible to deduce the Liouville property for suitable viscosity solutions of the inequality
\[
\mathcal{P}^+_{\lambda,\Lambda}(D^2u)\geq u^q|Du|^\gamma\text{ in }\R^N
\]
in the following range
\[
(\beta-2)q+(\beta-1)\gamma\leq \beta\ , \, 0<\gamma<1\ ,\, q>0\ ,\, N>2\ .
\]
that however is outside the framework of the previous Conjecture \ref{conjprod}. In particular, one cannot deduce the result in the case $q=0$ (namely in the case of the absence of zero-th order terms), that remains at this stage an open problem even in the sub-linear regime of the gradient nonlinearity for equations driven by fully nonlinear operators. Finally, observe that a suitable modification of the scheme of proof in \cite[Theorem 3.1]{BirDem} leads to the improved Liouville property for the inequality
\[
\mathcal{P}^-_{\lambda,\Lambda}(D^2u)\geq u^q|Du|^\gamma\text{ in }\R^N\ ,u\geq0\ ,
\]
under the assumptions
\[
(\alpha-2)q+(\alpha-1)\gamma\leq \alpha\ ,\, 0<\gamma<1\ , \, q>0\ , \, N>2\ , \, \alpha=\frac{\lambda}{\Lambda}(N-1)+1\ .
\]
\end{rem}

\begin{rem}\label{rembir}
A partial result for $F=\mathcal{P}^-_{\lambda,\Lambda}$, without any restriction on $q,\gamma$ could be deduced exploiting that nonnegative solutions to $\mathcal{P}^-_{\lambda,\Lambda}(D^2u)\geq u^q|Du|^\gamma$ are also solutions to $\mathcal{P}^-_{\lambda,\Lambda}(D^2u)\geq0$ under the assumption $N\leq\frac{\Lambda}{\lambda}+1$ through \cite[Theorem 3.2]{CLeoni}. For general uniformly elliptic operators, the Liouville property holds for any $q,\gamma\geq0$ in the one dimensional case, see Remark \ref{low} below.
\end{rem}

The previous result could be also the starting point to address the following generalization of \cite{FilippucciJDE2011,SirakovInd} to systems of fully nonlinear inequalities with first-order terms having super-linear growth.
\begin{opp}
Does the Liouville property hold for the system
\[
\begin{cases}
F(x,D^2u)\geq v^{q_1}|Dv|^{\gamma_1}&\text{ in }\R^N\ ,\\
F(x,D^2v)\geq u^{q_2}|Du|^{\gamma_2}&\text{ in }\R^N\ ,
\end{cases}
\]
where $F$ is uniformly elliptic and the exponents $q_1,q_2,\gamma_1,\gamma_2$ are as in the previous sections?
\end{opp}
We now mention an open problem for degenerate operators:
\begin{opp}
Let $\X$ be a family of H\"ormander vector fields, $D_\X$ be the horizontal gradient in $\R^m$ and $(D^2_\X)^*$ the symmetrized horizontal Hessian in $\mathrm{Sym}_m$, $m\leq N$, (see Section \ref{sec:sol} for precise definitions). Do the aforementioned Liouville properties extend to degenerate problems such as
\[
\mathcal{P}^\pm_{\lambda,\Lambda}((D^2_\X u)^*)\pm u^q|D_\X u|^\gamma\geq u^p\text{ in }\R^N
\]
for some values of $q,\gamma,p$?
\end{opp}


\section{Fully nonlinear equations with superlinear gradient growth perturbed by drift terms}

We now consider Liouville properties for viscosity solutions to fully nonlinear equations of the form
\begin{equation}\label{H1H2}
F(x,D^2u) - b(x)\cdot Du=H(Du)\text{ in }\R^N,
\end{equation}
where $F$ satisfies the same assumptions of the previous sections and $b$ is a confining vector field. Our results will be concerned with Hamiltonians $H(Du)$ with power-like growth. We will show that, under suitable assumptions on $b$, the drift term is strong enough to provide the Liouville property even when $H$ has negative sign.


 We assume that $b$ is locally Lipschitz in $x$, namely for all $R>0$ there exists $K_R>0$ such that
\begin{equation}\label{b}
\sup_{|x|,|y|\leq R}|b(x)-b(y)|\leq K_R|x-y|\ .
\end{equation}
As mentioned in \cite{BC}, a simple example of vector field satisfying the assumptions in this section are perturbation of Ornstein-Uhlenbeck velocity fields with a lower-order perturbation such as
\[
b(x)=\nu(m-x)+h(x)\ ,\nu>0\ ,m\in\R^N\ ,\lim_{|x|\to\infty}\frac{h(x)\cdot x}{|x|^2}=0.
\]
The main result of the section is the following
\begin{thm}\label{rep}
Let $F$ be continuous, uniformly elliptic and such that $F(x,0)=0$, $H(Du)=A|Du|^\gamma$, and either
$$A > 0, \quad \text{and} \quad \gamma>0$$
or
$$A \in \mathbb R, \quad \text{and} \quad \gamma= 2. $$
Assume also that $b$ satisfies \eqref{b} and
\begin{equation}\label{drift}
\limsup_{|x|\to\infty}b(x)\cdot x < \lambda-\Lambda(N-1)\ .
\end{equation}

Let $u$ be a supersolution to \eqref{H1H2} bounded from below. Then $u$ is constant.
\end{thm}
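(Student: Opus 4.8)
The plan is to reduce the fully nonlinear inequality to one for the Pucci maximal operator carrying the same drift, to dispose of the gradient term $A|Du|^\gamma$ according to the sign of $A$, and then to conclude via the Liouville property based on Lyapunov functions (Khas'minskii test) established in \cite{BC,BG_lio1}. \textbf{Step 1 (reduction to $\mathcal{M}^+$).} Since $F(x,0)=0$ and $F$ is uniformly elliptic, $F(x,M)\le\mathcal{M}^+_{\lambda,\Lambda}(M)$ for every $M\in\mathrm{Sym}_N$; hence, arguing on test functions, any viscosity supersolution $u$ of \eqref{H1H2} is a viscosity supersolution of $\mathcal{M}^+_{\lambda,\Lambda}(D^2u)-b(x)\cdot Du\ge A|Du|^\gamma$ in $\R^N$, so it suffices to treat this inequality.

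\textbf{Step 2 (removing the gradient term).} If $A>0$ and $\gamma>0$, or if $A=0$ and $\gamma=2$, then $A|Du|^\gamma\ge0$, so $u$ is already a supersolution, bounded from below, of $\mathcal{M}^+_{\lambda,\Lambda}(D^2u)-b(x)\cdot Du\ge0$. If instead $A<0$ and $\gamma=2$, I would perform a Hopf--Cole type change of unknown $v=\phi(u)$ with $\phi$ smooth, increasing and concave, to be chosen; working on test functions (if $\psi$ touches $v$ from below at $x_0$ then $\phi^{-1}\circ\psi$ touches $u$ from below there) and using $\phi'(u)D^2u=D^2v-\phi''(u)\,Du\otimes Du$, together with the positive $1$-homogeneity and sub-additivity of $\mathcal{M}^+_{\lambda,\Lambda}$ and the identity $\mathcal{M}^+_{\lambda,\Lambda}(p\otimes p)=-\lambda|p|^2$, one obtains
\[
\mathcal{M}^+_{\lambda,\Lambda}(D^2v)-b(x)\cdot Dv\ \ge\ \big(\lambda|\phi''(u)|-|A|\,\phi'(u)\big)|Du|^2 .
\]
Choosing $\phi'(s)=e^{-(|A|/\lambda)s}$, i.e. $\phi(s)=-\tfrac{\lambda}{|A|}e^{-(|A|/\lambda)s}$, makes the right-hand side vanish, so $v$ is a viscosity supersolution of $\mathcal{M}^+_{\lambda,\Lambda}(D^2v)-b(x)\cdot Dv\ge0$; moreover $v$ is bounded (in particular bounded from below), since $u$ is bounded from below. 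In every case we are thus reduced to a bounded-below viscosity supersolution $w$ of $\mathcal{M}^+_{\lambda,\Lambda}(D^2w)-b(x)\cdot Dw\ge0$ on $\R^N$, and $u$ will be constant once $w$ is.

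\textbf{Step 3 (Khas'minskii test).} I would then invoke the Liouville property of \cite{BC,BG_lio1}: if there is a viscosity subsolution $W$ of $\mathcal{M}^+_{\lambda,\Lambda}(D^2W)-b(x)\cdot DW\le0$ outside a compact set with $W(x)\to-\infty$ as $|x|\to\infty$, then every supersolution $w$ of the same equation with $\limsup_{|x|\to\infty}w(x)/W(x)\le0$ is constant. The natural candidate is the radial $W(x)=-\log|x|$: a direct computation (cf.\ Lemma~\ref{prepP}) gives $\mathcal{M}^+_{\lambda,\Lambda}(D^2W)=\lambda(\beta-2)|x|^{-2}$, while $-b(x)\cdot DW=(b(x)\cdot x)|x|^{-2}$, so \eqref{drift} and the identity $\lambda-\Lambda(N-1)=-\lambda(\beta-2)$ yield $\mathcal{M}^+_{\lambda,\Lambda}(D^2W)-b(x)\cdot DW\le0$ for $|x|$ large. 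Since $w$ is bounded from below and $W\to-\infty$, one has $\limsup_{|x|\to\infty}w/W\le0$; hence $w$, and therefore $u$, is constant.

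\textbf{Main obstacle.} The delicate point is Step 3: one must secure the Lyapunov function under exactly the \emph{non-strict} form of \eqref{drift} --- the constant $\lambda-\Lambda(N-1)$ being sharp (cf.\ Remark~\ref{sharpdrift}) --- which forces $W$ to grow precisely like $-\log|x|$ and leaves no margin, and one must also verify that the comparison principle of \cite{CIL} applies on the annuli used in the Khas'minskii argument; this is the heart of the construction in \cite{BC,BG_lio1}. A secondary care is that the change of variables in Step 2 be justified at the level of test functions rather than for classical solutions; note also that this device closes the borderline only in the quadratic case $\gamma=2$ when $A<0$, no analogous transformation being available for a general superlinear power.
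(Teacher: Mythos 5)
Your proposal is correct and follows essentially the same route as the paper: reduction to $\mathcal{M}^+_{\lambda,\Lambda}(D^2u)-b\cdot Du\ge 0$, the exponential (Hopf--Cole) change of unknown for $A<0$, $\gamma=2$ (your $\phi(s)=-\tfrac{\lambda}{|A|}e^{-(|A|/\lambda)s}$ is the paper's $v=\lambda(1-e^{-u/\lambda})$ up to normalization, with the same $\mathcal{M}^+(M+N)\ge\mathcal{M}^+(M)+\mathcal{M}^-(N)$ computation and test-function justification), and the Lyapunov function $W=-\log|x|$ under \eqref{drift}. The only cosmetic difference is that you invoke the Khas'minskii-type Liouville result directly from \cite{BC,BG_lio1}, whereas the paper reproves it as Lemma \ref{lemdrift} (itself explicitly modeled on those references), so the content is the same.
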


The result shows that the presence of a suitable drift term forces the Liouville property, independently of the positive power $\gamma$ in the equation. We recall once more that when $H(Du)=A|Du|^\gamma$ and $b \equiv 0$, one needs $\gamma\leq \frac{N}{N-1}(<2)$, as shown in \cite[Theorem 7.1]{CaristiMitidieri}. Nevertheless, one can drop this assumption by requiring the velocity field $b$ to point toward the origin for large $|x|$, and having a sufficiently large inward component. Such a condition is by no means new (it can be found in e.g. \cite{BC} or \cite{BG_lio1} 
and references therein). The main contribution of Theorem \ref{rep} is that \eqref{drift} is strong enough to yield the Liouville property even when the super-linear gradient term has a {\it repulsive} behavior, for example when $H(Du)=-|Du|^2$, where the sign is negative.

The proof of our result will be based on the following crucial lemma, concerning the Liouville property for \eqref{H1H2} when $H \equiv 0$. We will rely on the existence of a sort of ``Lyapunov function'', which in our nonlinear setting reads as the existence of an explosive subsolution to $\mathcal{P}^+_{\lambda,\Lambda}(D^2u)-b(x)\cdot Du = 0$, see \eqref{lyapnl} below.
 While the argument might look different with respect to the one that has been used in previous sections, we are still controlling from below the behavior of $u$ with suitable subsolutions, which are somehow playing the role of the fundamental solutions in Section 2. Its proof builds upon strategies already appeared in e.g. \cite[Theorem 2.1]{BG_lio1} for classical solutions to linear equations and \cite[Theorem 2.1]{BC}, \cite[Proposition 3.1]{BG_lio1}. Note that 
 the one-side bound can be relaxed, cf Remark \ref{rel} below.

\begin{lemma}\label{lemdrift} Under the assumptions on $b$ of Theorem \ref{rep}, let $u$ be a supersolution to 
\[
E(x,Du,D^2u) := \, \mathcal{P}^+_{\lambda,\Lambda}(D^2u)-b(x)\cdot Du\geq0 \quad \text{ in }\R^N
\]
bounded from below. Then $u$ is constant.

\end{lemma}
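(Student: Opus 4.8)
The plan is to use the Khas'minskii/Lyapunov test adapted to the viscosity setting: construct an explosive classical subsolution to the linearized equation, then combine it with the comparison principle on annuli and the strong minimum principle to squeeze $u$ to a constant. The first step is to build a radial function $w(x) = \phi(|x|)$, defined for $|x|$ large, such that
\[
\mathcal{M}^+_{\lambda,\Lambda}(D^2 w) - b(x)\cdot Dw \ge 0 \quad \text{for } |x| \text{ large}, \qquad \lim_{|x|\to\infty} w(x) = -\infty .
\]
Using the eigenvalue structure from Lemma \ref{prepP}, for a radial profile with $\phi'(r) < 0$ and $\phi''(r) > 0$ (convex decreasing) one has $\mathcal{M}^+_{\lambda,\Lambda}(D^2 w) = -\lambda \phi''(r) + \Lambda (N-1) \phi'(r)/r$, so the subsolution inequality becomes, roughly,
\[
-\lambda \phi''(r) + \frac{\Lambda(N-1)}{r}\phi'(r) - b(x)\cdot x \,\frac{\phi'(r)}{r} \ge 0 .
\]
Under \eqref{drift}, for large $r$ we have $b(x)\cdot x \le \lambda - \Lambda(N-1) + o(1)$, and since $\phi'(r)/r < 0$, the drift term contributes at least $(\lambda - \Lambda(N-1))\phi'(r)/r$ with a favorable sign; the choice $\phi(r) = -\log r$ (so $\phi' = -1/r$, $\phi'' = 1/r^2$) makes the leading terms cancel and the remainder nonnegative for $|x|$ large. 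I would present this computation explicitly, being careful about the $o(1)$ term from \eqref{drift} and choosing $R_0$ accordingly.

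Next I would run the comparison argument. Assume by contradiction $u$ is nonconstant; by the strong minimum principle (applicable since $u$ is a supersolution of $\mathcal{M}^+_{\lambda,\Lambda}(D^2u) - b\cdot Du \ge 0$, using the local Lipschitz bound \eqref{b} on $b$) $u$ attains no interior minimum, so $m(R) := \min_{|x|\le R} u$ is strictly decreasing, and after subtracting a constant we may assume $\inf_{\R^N} u = 0$ while $m(R) > 0$ for all $R$. Fix $R_1 > R_0$ and, for $R > R_1$, consider on the annulus $A_{R_1,R}$ the function
\[
\Psi_R(x) = m(R_1) + \frac{m(R) - m(R_1)}{w(R) - w(R_1)}\,(w(x) - w(R_1)),
\]
which is a classical subsolution of the same equation (an affine increasing function of the subsolution $w$ — here one uses that $\mathcal{M}^+_{\lambda,\Lambda}$ is positively homogeneous and that adding a constant preserves the inequality). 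On $\partial A_{R_1,R}$ one checks $\Psi_R \le u$: on $|x| = R_1$, $\Psi_R = m(R_1) \le u$, and on $|x| = R$, $\Psi_R = m(R) \le u$. By the comparison principle (\cite{CIL}) for the proper, uniformly elliptic operator $E$, $\Psi_R \le u$ in $A_{R_1,R}$. Now let $R \to \infty$: since $w(R) \to -\infty$ and $m(R)$ stays bounded (in $[0, m(R_1)]$), the coefficient $(m(R)-m(R_1))/(w(R)-w(R_1)) \to 0$, so $\Psi_R(x) \to m(R_1)$ pointwise for each fixed $x$. Hence $u(x) \ge m(R_1)$ for all $|x| \ge R_1$, and trivially for $|x| \le R_1$ as well, giving $u \ge m(R_1) > 0$ everywhere — contradicting $\inf u = 0$. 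Therefore $u$ is constant.

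The main obstacle I anticipate is the justification of the comparison principle in this setting and the monotonicity/positivity reductions: one must ensure that $E(x,p,M)$ is proper and that the version of comparison invoked (with one classical subsolution and one viscosity supersolution on a bounded annulus, with a locally Lipschitz first-order coefficient) is exactly the one available in \cite{CIL}; this is standard but should be stated carefully. A secondary technical point is the application of the strong minimum principle to guarantee $m(R)$ is strictly decreasing and that we may normalize $\inf u = 0$ — here I would cite \cite{BG,GP} for the viscosity strong minimum principle under \eqref{b}. Everything else is the explicit radial computation for $w$, which is routine given Lemma \ref{prepP}.
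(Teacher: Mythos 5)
Your scheme is the same Khas'minskii/Lyapunov argument as the paper's, with the same Lyapunov function $w(x)=-\log|x|$, comparison on annuli, and the strong minimum principle at the end. The implementation differs in one step: the paper perturbs the supersolution, setting $v_\xi=u-\xi w$, uses $\mathcal{M}^+_{\lambda,\Lambda}(X-Y)\ge \mathcal{M}^+_{\lambda,\Lambda}(X)-\mathcal{M}^+_{\lambda,\Lambda}(Y)$ to see that $v_\xi$ is again a viscosity supersolution, applies the minimum principle on a large annulus and lets $\xi\to0$; you instead compare $u$ directly with the explicit one-parameter family of classical functions $\Psi_R=m(R_1)+c_R\,(w-w(R_1))$, $c_R=\frac{m(R)-m(R_1)}{w(R)-w(R_1)}\ge0$, and let $R\to\infty$ using only that $m$ is bounded and $w(R)\to-\infty$. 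This variant is perfectly legitimate (it is the same device as in Lemma \ref{mRincr}), it avoids the ``transitivity of viscosity inequalities'' step, and it makes transparent why the hypothesis can be relaxed to $\liminf_{|x|\to\infty}u(x)/\log|x|\ge0$ as in Remark \ref{rel}.

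As written, however, your proof has an orientation problem that must be fixed, because with the paper's convention $\mathcal{M}^+_{\lambda,\Lambda}(M)=\sup_{\lambda I\le A\le\Lambda I}\big(-\mathrm{Tr}(AM)\big)$ supersolutions satisfy $E\ge0$ and subsolutions $E\le0$. First, for a radial $w=\phi(|x|)$ with $\phi'<0$ and $\phi''>0$ one has $\mathcal{M}^+_{\lambda,\Lambda}(D^2w)=-\lambda\phi''(r)-\Lambda(N-1)\phi'(r)/r$, not $-\lambda\phi''(r)+\Lambda(N-1)\phi'(r)/r$ (compare the second formula in Lemma \ref{prepP}). Second, what your comparison step actually requires of $w$ is the classical \emph{subsolution} inequality $E(x,Dw,D^2w)\le0$ for large $|x|$, not the inequality $\ge0$ that you display: only if $E(w)\le 0$ is $\Psi_R$ (a nonnegative multiple of $w$ plus a constant, by positive homogeneity and the absence of zeroth-order terms) a classical subsolution, and only then does comparison with the viscosity supersolution $u$ yield $\Psi_R\le u$; with $E(w)\ge0$ the step fails, since $\Psi_R$ would be a supersolution and comparison gives nothing. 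With the corrected formula one finds $E(x,Dw,D^2w)=\big(\Lambda(N-1)-\lambda+b(x)\cdot x\big)/|x|^2$, which is $\le0$ for large $|x|$ under \eqref{drift} --- exactly the paper's Step 1 --- whereas your displayed inequality, taken literally, is violated by $\phi=-\log r$ under \eqref{drift}. Once these two signs are straightened out, your argument goes through; the residual issue you flag about the non-strict $\limsup$ in \eqref{drift} (an $o(1)$ excess that $-\log|x|$ alone does not absorb) is treated equally briefly in the paper's own proof.
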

The proof is based on the following idea: first, one proves that there exists a compact set $K\subset\R^N$ such that $\inf_{\R^N}u=\min_{K}u$. Then, the function $z=u-\min_Ku$ satisfies the viscosity inequality $E(x,Du,D^2u)\geq0$ in $\R^N$, while $z$ vanishes somewhere in $K$. Therefore, $z\equiv0$ by the strong minimum principle, and hence $u$ is constant. We prove this result through several steps, with the aid of a Lyapunov function, as outlined in the next proof.
We premise the following well-known property, known as \textit{transitivity of the viscosity inequalities}.
\begin{lemma}\label{transvisc}
Let $u$ be satisfying the inequality $\mathcal{P}^+_{\lambda,\Lambda}(D^2u)\geq f$ in the viscosity sense and $v$ be such that the inequality $\mathcal{P}^+_{\lambda,\Lambda}(D^2v)\leq g$ holds in the viscosity sense. Then, $w=u-\xi v$, $\xi>0$, satisfies $\mathcal{P}^+_{\lambda,\Lambda}(D^2w)\geq f-\xi g$ in the viscosity sense.  
\end{lemma}
\begin{proof}
The proof is simple when one of the involved functions is smooth, cf e.g. \cite[Lemma 2.12]{CC}. A more general statement can be found in \cite[Theorem 5.3]{CC} and \cite[p. 745]{ArmstrongCPAM}, while a proof for Hamilton-Jacobi-Bellman operators can be found in \cite[Theorem 2.1-Step 3]{BC} or \cite[Proposition 3.1]{BG_lio1}.
\end{proof}
\begin{proof} \textit{Step 1. (Existence of a Lyapunov function)} We first observe that there exists $\bar R\geq0$ such that the function $w(x)=-\log|x|$ satisfies $w\in C^2(\R^d\backslash\{0\})$, 
\begin{equation}\label{lyapnl}
\text{$\lim_{|x|\to\infty}w(x)=-\infty$ \ and \ $E(x,Du,D^2u)\leq0$ in \ $\R^N\backslash \overline{B}(0,\bar R)$}.
\end{equation}
 Indeed,
\[
\mathcal{P}^+_{\lambda,\Lambda}(D^2w)-b(x)\cdot Dw=\frac{\Lambda(N-1)}{|x|^2}-\frac{\lambda}{|x|^2}+b(x)\cdot \frac{x}{|x|^2}
\]
and \eqref{drift} implies the (last) claim outside a large ball of radius $\bar R$. 

\textit{Step 2}. For $\xi>0$, we set $v_\xi:=u-\xi w$. Then, $v_\xi$ is continuous on the set $\{x\in\R^N:|x|\geq \bar R\}$. Moreover, by Lemma \ref{transvisc}, we have that the following inequality holds in the viscosity sense
\[
E(x,Dv_\xi,D^2v_\xi)\geq0\text{ for every $x$ such that $|x|>\bar R$}.
\]
Define $c_\xi=\min_{|x|= \bar R}v_\xi$. Since $u$ is bounded below, we have
\[
\lim_{|x|\to\infty}v_\xi(x)=+\infty\ ,
\]
and hence there exists $K_\xi>\bar R$ such that $v_\xi>c_\xi$ for every $x$ such that $|x|\geq K_\xi$. \\

\textit{Step 3}. We use the comparison principle on $A_{R,K_\xi}:=\{x\in\R^N:\bar R<|x|<K_\xi\}$, cf \cite{BC,BG} and the references therein, to conclude
\[
\min_{A_{R,K_\xi}}v_\xi(x)=\min_{\{x\in\R^N:|x|=\bar R\text{ or }|x|=K_\xi\}}v_\xi(x)=c_\xi\ .
\]
Since $v_\xi>c_\xi$ for $|x|\geq K_\xi$, we get for all $|y|\geq \bar R$,
\[
v_\xi(y)\geq c_\xi=\min_{\{x\in\R^N: |x|=\bar R\}}u+\xi \max_{\{x\in\R^N:|x|=\bar R\}}w\ .
\]
We let $\xi\to0$ and conclude
\[
u(y)\geq \min_{\{x\in\R^N:|x|=\bar R\}}u\ ,|y|\geq\bar R\ .
\]
On the other hand, the comparison principle applied on the ball $B(0,\bar R)$ leads to
\[
u(y)\geq \min_{\{x\in\R^N:|x|=\bar R\}}u\ ,|y|<\bar R\ .
\]
Combining the above assertions, we conclude that $u$ attains its minimum at some point of $\partial B(0,\bar R)$, so that $u$ is constant by the strong minimum principle \cite{BG,GP}.
\end{proof}

 We can now prove the main theorem of this section.
 
\begin{proof}[Proof of Theorem \ref{H1H2}]
Let us start with the case $A > 0,  \gamma>0$. In such case, it is sufficient to observe that $u$ is a supersolution bounded from below to
\[
\mathcal{P}^+_{\lambda,\Lambda}(D^2u) - b(x)\cdot Du\geq A|Du|^\gamma \quad \text{ in }\R^N
\]
and in turn to
\[
E(x,Du,D^2u)\geq0\text{ in }\R^N\ ,
\]
so it is sufficient to apply Lemma \ref{lemdrift} to obtain the desired result.
\smallskip

Let us now consider the case $H(Du)=A |Du|^2$, and assume $A = -1$ for simplicity. Then $u$ solves in the viscosity sense
 \[
\mathcal{P}^+_{\lambda,\Lambda}(D^2u)+|Du|^2-b(x)\cdot Du\geq0\text{ in }\R^N\ .
\]
By the nonlinear Hopf-Cole transform, the function \[v(x)=\lambda(1-e^{-\frac{1}{\lambda}u(x)})\] is a bounded below viscosity solution to the inequality
\[
E(x,Dv,D^2v) = \mathcal{P}^+_{\lambda,\Lambda}(D^2v)-b(x)\cdot Dv\geq 0\text{ in }\R^N\ .
\]
To see this, assume first that $u$ is smooth. We have
\[
Dv=Du\ e^{-\frac{1}{\lambda}u},\ \quad D^2v=e^{-\frac{1}{\lambda}u}D^2u-\frac{1}{\lambda}Du\otimes Du\ e^{-\frac{1}{\lambda}u}\ .
\]
Then, exploiting the inequality $\mathcal{P}^+_{\lambda,\Lambda}(M+N)\geq \mathcal{P}^+_{\lambda,\Lambda}(M)+\mathcal{P}^-_{\lambda,\Lambda}(N)$, we get
\begin{align*}
\mathcal{P}^+_{\lambda,\Lambda}(D^2v)&-b(x)\cdot Dv=\mathcal{P}^+_{\lambda,\Lambda}\left(e^{-\frac{1}{\lambda}u}D^2u-\frac{1}{\lambda}Du\otimes Du\ e^{-\frac{1}{\lambda}u}\right)-b(x)\cdot Du\ e^{-\frac{1}{\lambda}u}\\
&\geq \mathcal{P}^+_{\lambda,\Lambda}(D^2u)e^{-\frac{1}{\lambda}u}+\mathcal{P}^-_{\lambda,\Lambda}\left(-\frac{1}{\lambda}Du\otimes Du\ e^{-\frac{1}{\lambda}u}\right)-b(x)\cdot Du\ e^{-\frac{1}{\lambda}u}\\
&=\mathcal{P}^+_{\lambda,\Lambda}(D^2u)e^{-\frac{1}{\lambda}u}-\frac{1}{\lambda}\mathcal{P}^+_{\lambda,\Lambda}\left(Du\otimes Du\right)e^{-\frac{1}{\lambda}u}-b(x)\cdot Du\ e^{-\frac{1}{\lambda}u}\\
&=(\mathcal{P}^+_{\lambda,\Lambda}(D^2u)+|Du|^2-b(x)\cdot Du)e^{-\frac{1}{\lambda}u}\ ,
\end{align*}
namely
\[
\frac{\mathcal{P}^+_{\lambda,\Lambda}(D^2v)-b(x)\cdot Dv}{1-v/\lambda}\geq \mathcal{P}^+_{\lambda,\Lambda}(D^2u)+|Du|^2-b(x)\cdot Du\ .
\]
When $u$ is just a viscosity supersolution, suppose by contradiction that $v$ does not satisfy $\mathcal{P}^+_{\lambda,\Lambda}(D^2v)-b(x)\cdot Dv\geq0$ in the viscosity sense. Then, there exists a smooth function $\psi$ such that $v-\psi$ attains a local minimum at some $x_0 \in A$, and
\[
\mathcal{P}^+_{\lambda,\Lambda}(D^2\psi)-b(x)\cdot D\psi<0\text{ in }A\ .
\]
We then define $\varphi(x)=-\lambda\log\left(1-\frac{\psi(x)}{\lambda}\right)$ in the above inequality to get a contradiction with the fact that $u$ solves in the viscosity sense the initial equation, and $u-\varphi$ attains a local minimum at $x_0$. Hence, we can again apply Lemma \ref{lemdrift}.
\end{proof}

\begin{rem}\label{rel}
One can relax the one-side bound from below on $u$ by the weaker growth condition $\liminf_{|x|\to\infty}\frac{u(x)}{\log|x|}\geq0$. Clearly, the latter condition is satisfied if $u$ is bounded below. 

Furthermore, one can also get a sufficient condition for the Liouville property in the presence of zero-th order terms such as $c(x)u$, $c>0$, by further imposing a two-side bound on the solution, see e.g. \cite{BG_lio1} or \cite{Pigola}, where the property is called $\lambda$-Liouville property. In the case $\lambda=\Lambda=1$, this implies that $\R^N$ is stochastically complete. The drift term can be also replaced with a more general concave Hamiltonian such as
\[
H_{\text{conc}}(x,u,Du)=\inf_{\alpha\in A}\{c^\alpha(x)u-b^\alpha(x)\cdot Du\}\ ,
\]
where $A$ is a metric space, under appropriate assumptions on $b^\alpha,c^\alpha$, cf \cite{BC,BG_lio1}. 

Note that the theorem can be extended in an obvious manner to non-negative Hamiltonians. On the other hand, the power-like case $H = A|Du|^\gamma$, with {\it negative} $A$ and $\gamma \neq 2$ is much more delicate. One may conjecture that the Liouville property holds as in the quadratic case $\gamma = 2$, but we do not know how to prove it at this stage.

We conclude by saying that some Liouville properties for different equations posed on Riemannian manifolds through similar methods have been obtained for solutions to linear PDEs satisfying two-side bounds in e.g. \cite[Section 13]{Gry1} and \cite{MPcomm}, see also \cite{PP} for fully nonlinear problems, where the existence of a Lyapunov function is required. Instead, other results for solutions bounded from one-side to nonlinear partial differential inequalities driven by the $p$-Laplacian have been established in \cite[Section 4]{Pigola} and recently in \cite[Theorem 1.2]{MV}.
\end{rem}

\begin{rem}
Other sufficient conditions implying the Liouville property can be obtained by considering different negative power-like functions, as $w(x)=-\frac{|x|^2}{2}$.
\end{rem}

\begin{rem}\label{sharpdrift}
 We show that the condition \eqref{drift} imposed on the velocity field is optimal for the Liouville property, at least for the class of operators $\mathcal P^+ +b(x)\cdot D$, with the velocity field $b$ satisfying the above assumptions. In the nonlinear setting, the claim that the validity of the Liouville property implies the existence of Lyapunov functions has been matter of recent research (see e.g. \cite{MPcomm} and references therein). Here, we obtain such implication (and therefore the equivalence) for a family of problems involving  $\mathcal P^+$ and Ornstein-Uhlenbeck drifts.

Note first that condition \eqref{drift} can be equivalently rewritten as
\[
\limsup_{|x|\to\infty}\{b(x)\cdot x\}<\lambda(2-\beta)\ .
\]
Then, for $\delta>0$ we take $u(x)=(1+|x|^2)^{-\frac{\delta}{2}}=:f(|x|)$. Computations in Lemma \ref{prepP} with $C_\delta=1$ give (using that $\Lambda(N-1)=\lambda(\beta-1)$ and the fact that for $|x|^2<1/(\delta+1)$ one has $\Lambda[1-(\delta+1)|x|^2]>\lambda[1-(\delta+1)|x|^2]$)
\[
\mathcal{P}^+_{\lambda,\Lambda}(D^2u)\geq  \delta\lambda\left[\frac{\beta-1}{(1+|x|^2)^{\frac{\delta}{2}+1}}-\frac{(\delta+1)|x|^2-1}{(1+|x|^2)^{\frac{\delta}{2}+2}}\right]\ .
\]
Hence, for $b(x)=\lambda (2-\beta+\delta)\frac{x}{1+|x|^2}$, we have
\[
\mathcal{P}^+_{\lambda,\Lambda}(D^2u)-b(x)\cdot Du=\beta\delta\lambda(1+|x|^2)^{-\frac{\delta}{2}-2}\geq0\text{ in }\R^N\ ,
\]
while $\lim_{|x|\to\infty}b(x)\cdot x>\lambda(2-\beta)$. 

Regarding the degenerate setting addressed in \cite{BG_lio1}, and the sharpness of the conditions therein, similar counterexamples can be built for PDEs over H\"ormander vector fields in the presence of a ``fundamental solution". This issue has been recently investigated in \cite{BG_lio2}.

\end{rem}

Lastly, we show how to improve the condition in Theorem \ref{rep} when looking at supersolutions to $F=\mathcal{P}^-_{\lambda,\Lambda}$ instead of $F=\mathcal{P}^+_{\lambda,\Lambda}$ (as we did in the previous Remark \ref{M-}). Here, the crucial point is that the difference between the supersolution and the Lyapunov function $w$, which is now a classical subsolution to $\mathcal{P}^-_{\lambda,\Lambda}(D^2u)-b \cdot Du$, turns out to be a supersolution of a problem involving $\mathcal{P}^+_{\lambda,\Lambda}$. That $\mathcal{P}^-$ switches into $\mathcal{P}^+$ is not a problem, since we still have the strong minimum principle and the comparison principle on bounded open sets.
\begin{thm}\label{rep2}
Let $u$ be a viscosity supersolution bounded below to 
\[
\mathcal{P}^-_{\lambda,\Lambda}(D^2u)-b(x)\cdot Du=A |Du|^\gamma \text{ in }\R^N,
\]
and either
$$A > 0, \quad \text{and} \quad \gamma>0$$
or
$$A \in \mathbb R, \quad \text{and} \quad \gamma= 2. $$
Assume also that $b$ satisfies \eqref{b} and
\begin{equation}\label{drift2}
\limsup_{|x|\to\infty}\{b(x)\cdot x\}<\Lambda-\lambda(N-1)\ .
\end{equation}
Then, $u$ is constant.
\end{thm}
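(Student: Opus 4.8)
The strategy is to imitate the proof of Theorem~\ref{rep}: reduce to the drift-only inequality by the same preliminary steps, and then prove the analogue of Lemma~\ref{lemdrift} with $\mathcal{M}^-_{\lambda,\Lambda}$ and \eqref{drift2} in place of $\mathcal{M}^+_{\lambda,\Lambda}$ and \eqref{drift}. For the reduction: if $A>0$ and $\gamma>0$, a bounded-below supersolution of $\mathcal{M}^-_{\lambda,\Lambda}(D^2u)-b\cdot Du=A|Du|^\gamma$ is at once a bounded-below supersolution of $\mathcal{M}^-_{\lambda,\Lambda}(D^2u)-b\cdot Du\geq0$; if $\gamma=2$ and $A<0$ (say $A=-1$, the other values of $A$ being analogous after rescaling), I would pass to $v=\lambda(1-e^{-u/\lambda})$, which is bounded below because $u$ is and satisfies $v<\lambda$. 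For smooth $u$ one has $D^2v=e^{-u/\lambda}\big(D^2u-\tfrac1\lambda Du\otimes Du\big)$ and, by the superadditivity $\mathcal{M}^-_{\lambda,\Lambda}(M+N)\geq\mathcal{M}^-_{\lambda,\Lambda}(M)+\mathcal{M}^-_{\lambda,\Lambda}(N)$ together with $\mathcal{M}^-_{\lambda,\Lambda}\big(-\tfrac1\lambda Du\otimes Du\big)=|Du|^2$,
\[
\mathcal{M}^-_{\lambda,\Lambda}(D^2v)-b\cdot Dv\geq e^{-u/\lambda}\big(\mathcal{M}^-_{\lambda,\Lambda}(D^2u)+|Du|^2-b\cdot Du\big)\geq0,
\]
and the general viscosity case is obtained exactly as in the proof of Theorem~\ref{rep}, testing $u$ from below with the inverse transform $\varphi=-\lambda\log(1-\psi/\lambda)$.

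So it remains to show that a bounded-below viscosity supersolution of $\mathcal{M}^-_{\lambda,\Lambda}(D^2u)-b\cdot Du\geq0$ in $\R^N$, with $b$ satisfying \eqref{b} and \eqref{drift2}, is constant. The Lyapunov function is again $w(x)=-\log|x|\in C^2(\R^N\backslash\{0\})$: one has $\lim_{|x|\to\infty}w(x)=-\infty$ and, from the eigenvalue computation used in Lemma~\ref{prepP},
\[
\mathcal{M}^-_{\lambda,\Lambda}(D^2w)-b(x)\cdot Dw=\frac{\lambda(N-1)-\Lambda+b(x)\cdot x}{|x|^2},
\]
which is $\leq0$ for $|x|\geq R$, $R$ large, precisely because of \eqref{drift2} (if the $\limsup$ in \eqref{drift2} is attained one modifies $w$ slightly, as in the linear case \cite{BG_lio1}).

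The delicate point, and the genuine novelty with respect to Lemma~\ref{lemdrift}, is the following: for $\xi>0$, the function $v_\xi:=u-\xi w$ is a viscosity supersolution of $\mathcal{M}^+_{\lambda,\Lambda}(D^2v_\xi)-b\cdot Dv_\xi\geq0$ in $\{|x|>R\}$ --- note that the extremal operator switches from $\mathcal{M}^-_{\lambda,\Lambda}$ to $\mathcal{M}^+_{\lambda,\Lambda}$. Indeed, if $\psi$ is smooth and $v_\xi-\psi$ has a local minimum at $x_0$ with $|x_0|>R$, then $u-(\psi+\xi w)$ has a local minimum at $x_0$ and $\psi+\xi w\in C^2$ near $x_0$, so the supersolution property of $u$ gives $\mathcal{M}^-_{\lambda,\Lambda}(D^2\psi+\xi D^2w)-b\cdot(D\psi+\xi Dw)\geq0$ at $x_0$; using the inequality $\mathcal{M}^-_{\lambda,\Lambda}(A+B)\leq\mathcal{M}^-_{\lambda,\Lambda}(B)+\mathcal{M}^+_{\lambda,\Lambda}(A)$ with $A=D^2\psi(x_0)$ and $B=\xi D^2w(x_0)$, and the $1$-homogeneity of $\mathcal{M}^-_{\lambda,\Lambda}$, one gets
\[
\mathcal{M}^+_{\lambda,\Lambda}(D^2\psi)(x_0)-b(x_0)\cdot D\psi(x_0)\geq-\xi\big(\mathcal{M}^-_{\lambda,\Lambda}(D^2w)(x_0)-b(x_0)\cdot Dw(x_0)\big)\geq0 .
\]
That $\mathcal{M}^-_{\lambda,\Lambda}$ turns into $\mathcal{M}^+_{\lambda,\Lambda}$ is harmless: $\mathcal{M}^+_{\lambda,\Lambda}(D^2\cdot)-b\cdot D\cdot$ is still uniformly elliptic with locally Lipschitz drift, so the comparison principle on bounded open sets and the strong minimum principle remain available.

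From here I would conclude as in the proof of Lemma~\ref{lemdrift}. Since $u$ is bounded below and $w\to-\infty$, $v_\xi(x)\to+\infty$ as $|x|\to\infty$; with $c_\xi:=\min_{\partial B(0,R)}v_\xi$, there is $K_\xi>R$ with $v_\xi>c_\xi$ on $\{|x|\geq K_\xi\}$, and the comparison principle for $\mathcal{M}^+_{\lambda,\Lambda}(D^2\cdot)-b\cdot D\cdot$ on the annulus $\{R<|x|<K_\xi\}$ yields $v_\xi\geq c_\xi$ there, hence on all of $\{|x|\geq R\}$. Unwinding the definition of $v_\xi$ (using $w\equiv-\log R$ on $\partial B(0,R)$) and letting $\xi\to0$ gives $u(y)\geq\min_{\partial B(0,R)}u$ for $|y|\geq R$; the minimum principle on $B(0,R)$ gives the same inequality for $|y|<R$; therefore $u$ attains its global minimum at a point of $\partial B(0,R)$, and the strong minimum principle forces $u$ to be constant. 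I expect the main obstacle to be the $\mathcal{M}^-_{\lambda,\Lambda}\to\mathcal{M}^+_{\lambda,\Lambda}$ switch of the third paragraph (and the bookkeeping needed to check that all maximum-principle tools survive it); the Hopf--Cole reduction and the borderline case of \eqref{drift2} should be comparatively routine.
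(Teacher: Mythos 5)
Your proposal is correct and follows essentially the same route as the paper: reduce to the drift-only inequality (directly for $A>0$, and via the nonlinear Hopf--Cole transform with the superadditivity of $\mathcal{M}^-_{\lambda,\Lambda}$ for $A<0$, $\gamma=2$), use $w(x)=-\log|x|$ as an explosive subsolution of $\mathcal{M}^-_{\lambda,\Lambda}(D^2\cdot)-b\cdot D$ under \eqref{drift2}, and observe that $v_\xi=u-\xi w$ becomes a supersolution for $\mathcal{M}^+_{\lambda,\Lambda}$, for which comparison and strong minimum principles still apply. The only difference is cosmetic: you spell out the test-function argument and use the Pucci inequality in the dual form $\mathcal{M}^-_{\lambda,\Lambda}(M+N)\le \mathcal{M}^-_{\lambda,\Lambda}(N)+\mathcal{M}^+_{\lambda,\Lambda}(M)$, which the paper leaves as a ``standard'' step citing the equivalent inequality $\mathcal{M}^+_{\lambda,\Lambda}(M+N)\ge \mathcal{M}^+_{\lambda,\Lambda}(M)+\mathcal{M}^-_{\lambda,\Lambda}(N)$.
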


\begin{rem}
Note that condition \eqref{drift2} is better than \eqref{drift} since 
\[
\Lambda-\lambda(N-1)>\lambda-\Lambda(N-1)\iff N(\Lambda-\lambda)>0\ .
\] 
We further remark that in such case the Liouville property could have been deduced via the corresponding results for the Laplacian since $\mathcal{P}^-_{\lambda,\Lambda}(D^2u)\leq-\Lambda\Delta u$, where the condition would read as
\begin{equation}\label{drift3}
\limsup_{|x|\to\infty}\{b(x)\cdot x\}<\Lambda(2-N)\ .
\end{equation}
However, condition \eqref{drift2} is again better than \eqref{drift3}, since
\[
\Lambda-\lambda(N-1)>\Lambda(2-N)\iff (\Lambda-\lambda)(N-1)>0\ .
\]
\end{rem}

\begin{proof}[Proof of Theorem \ref{rep2}]
The proof is similar to Theorem \ref{rep}. We only outline the main differences. The Liouville property for the viscosity inequality $\mathcal{P}^-_{\lambda,\Lambda}(D^2u)+b(x)\cdot Du\geq0$ can be obtained exactly as in Lemma \ref{lemdrift}, using $w(x)=-\log|x|$ as a Lyapunov function (an explosive subsolution of $\mathcal{P}^-_{\lambda,\Lambda}(D^2u)-b \cdot Du$). Note that for $\xi>0$, by a standard test function argument together with the inequality $\mathcal{P}^+_{\lambda,\Lambda}(M+N)\geq \mathcal{P}^+_{\lambda,\Lambda}(M)+\mathcal{P}^-_{\lambda,\Lambda}(N)$, one verifies that $v_\xi = u-\xi w$ solves in viscosity sense
\[
\mathcal{P}^+_{\lambda,\Lambda}(D^2v_\xi)-b(x)\cdot Dv_\xi\geq0\text{ for every $x$ such that $|x|>\bar R$},
\]
namely it is a supersolution of an equation involving the maximal operator $\mathcal{P}^+$ (and not the minimal operator $\mathcal{P}^-$). Still, one has the strong minimum principle and the comparison principle on the annuli to achieve the result.

 In the case $H(Du)=-|Du|^2$, one argues as in Theorem \ref{rep}, now using the inequality $\mathcal{P}^-_{\lambda,\Lambda}(M+N)\geq \mathcal{P}^-_{\lambda,\Lambda}(M)+\mathcal{P}^-_{\lambda,\Lambda}(N)$ to find that for $v(x)=\lambda(1-e^{-\frac{1}{\lambda}u(x)})$ and smooth $u$
\begin{align*}
\mathcal{P}^-_{\lambda,\Lambda}(D^2v)&-b(x)\cdot Dv=\mathcal{P}^-_{\lambda,\Lambda}\left(e^{-\frac{1}{\lambda}u}D^2u-\frac{1}{\lambda}Du\otimes Du\ e^{-\frac{1}{\lambda}u}\right)-b(x)\cdot Du\ e^{-\frac{1}{\lambda}u}\\
&\geq \mathcal{P}^-_{\lambda,\Lambda}(D^2u)e^{-\frac{1}{\lambda}u}+\mathcal{P}^-_{\lambda,\Lambda}\left(-\frac{1}{\lambda}Du\otimes Du\ e^{-\frac{1}{\lambda}u}\right)-b(x)\cdot Du\ e^{-\frac{1}{\lambda}u}\\
&=\mathcal{P}^-_{\lambda,\Lambda}(D^2u)e^{-\frac{1}{\lambda}u}-\frac{1}{\lambda}\mathcal{P}^+_{\lambda,\Lambda}\left(Du\otimes Du\right)e^{-\frac{1}{\lambda}u}-b(x)\cdot Du\ e^{-\frac{1}{\lambda}u}\\
&=(\mathcal{P}^-_{\lambda,\Lambda}(D^2u)+|Du|^2-b(x)\cdot Du)e^{-\frac{1}{\lambda}u} \ge 0\ .
\end{align*}

\end{proof}


\begin{rem}\label{mixquad}
As a final suggestion, following \cite[Theorem 1.1]{Chang}, a Liouville result as in Theorems \ref{rep} and \ref{rep2} can be obtained for the problem
\[
\mathcal{P}^+_{\lambda,\Lambda}(D^2u)-b(x)\cdot Du+u^q|Du|^2\geq0\text{ in }\R^N\ ,q\geq0
\]
via the transformation
\[
v(x)=\int_0^{u(x)}e^{-\frac{s^{q+1}}{(q+1)\lambda}}\,ds\ .
\]
Indeed, assuming that $u$ is a smooth function, standard calculations lead to
\[
Dv=e^{-\frac{u^{q+1}}{\lambda(q+1)}}Du\ ;
\]
\[
D^2v=-e^{-\frac{u^{q+1}}{\lambda(q+1)}}\frac{u^q}{\lambda}Du\otimes Du+e^{-\frac{u^{q+1}}{\lambda(q+1)}}D^2u\ .
\]
Then,
\begin{multline*}
\mathcal{P}^+_{\lambda,\Lambda}(D^2v)-b(x)\cdot Dv=\mathcal{P}^+_{\lambda,\Lambda}\left(-e^{-\frac{u^{q+1}}{\lambda(q+1)}}\frac{u^q}{\lambda}Du\otimes Du+e^{-\frac{u^{q+1}}{\lambda(q+1)}}D^2u\right)\\
-e^{-\frac{u^{q+1}}{\lambda(q+1)}}b(x)\cdot Du\\
\geq \mathcal{P}^-_{\lambda,\Lambda}\left(-e^{-\frac{u^{q+1}}{\lambda(q+1)}}\frac{u^q}{\lambda}Du\otimes Du\right)+\mathcal{P}^+_{\lambda,\Lambda}\left(e^{-\frac{u^{q+1}}{\lambda(q+1)}}D^2u\right)-e^{-\frac{u^{q+1}}{\lambda(q+1)}}b(x)\cdot Du\\
=e^{-\frac{u^{q+1}}{\lambda(q+1)}}\left(u^q|Du|^2+\mathcal{P}^+_{\lambda,\Lambda}(D^2u)-b(x)\cdot Du\right)\geq0\ ,
\end{multline*}
so that the Liouville property for $u$ boils down to the one for $v$.

A similar transformation can be applied to problems driven by the singular operator \[F(Du,D^2u)=|Du|^{m-2}\mathcal{P}^+_{\lambda,\Lambda}(D^2u),\] $m>1$, and nonlinearities of the form $H(u,Du)=u^q|Du|^\gamma$ with $m=\gamma$ and any $q\geq0$, as in \cite{Chang} for the case of the $m$-Laplacian.
\end{rem}

\section{Further remarks}\label{sec:rem}
We now discuss how to derive the Liouville property in some special situations.
\begin{rem}\label{nondiv}
It is immediate to conclude that all the Liouville-type results obtained above lead to new Liouville properties for supersolutions to equations driven by non-divergence operators of the form
\[
-\mathrm{Tr}(A(x, u, Du)D^2u)\geq H_i(x,u,Du)\text{ in }\R^N\ ,
\]
where $\lambda I_N\leq A\leq \Lambda I_N$ (see for example Remark \ref{nplapl} below). We emphasize that the literature on Liouville properties for such linear operators is not as wide as the one on constant coefficient equations. See for example \cite{Sobol} for some results on problems with zero-th order terms. All the known properties in the non-divergence setting with the presence of gradient dependent nonlinearities have been actually obtained from those on fully nonlinear problems.
\end{rem}

\begin{rem}\label{low} In Sections \ref{sec;sum} and \ref{sec;conj} we analyzed Liouville-type results for $\beta>2$ and $N>2$. The cases $N=1$ (that corresponds to $\beta \leq2$) and $N=2$ are in general simpler to address, as it happens for quasi-linear problems driven by the Laplacian, since the Liouville properties follow from the corresponding ones of superharmonic functions.

First, when $\beta\leq2$, the Liouville properties for (classical) solutions of $F(x,D^2u)\geq H_i(x,u,Du)$ follow from the fact that there are no non-nonstant positive concave functions on the real line, since when $F(x,0)=0$ and $F$ is uniformly elliptic, it holds $\mathcal{P}^+_{\lambda,\Lambda}(u'')\geq F(x,u'')$ and so $u$ solves $\mathcal{P}^+_{\lambda,\Lambda}(u'')\geq0$ with
\[
\mathcal{P}^+_{\lambda,\Lambda}(s)=\begin{cases}
-\lambda s\text{ if }s\geq0,\\
-\Lambda s\text{ if }s<0.
\end{cases}
\]
When instead $N=2$ and $F=\mathcal{P}^-_{\lambda,\Lambda}$, the result of Theorem \ref{mainsum} follows from the fact that the supersolutions to $F\geq H$ are superharmonic, so $u$ is constant by e.g. \cite[Theorem 2.1]{BG_lio1} or \cite[Theorem 29 Ch. 2]{PW}, and it holds for any $q,\gamma\geq0$. Similarly, Conjecture \ref{conjprod} for $F=\mathcal{P}^-_{\lambda,\Lambda}$ is true by the same argument for any $q,\gamma\geq0$ in $\R^2$. The case of arbitrary fully nonlinear operators in the plane (and in particular when $F=\mathcal{P}^+_{\lambda,\Lambda}$) is more delicate, since functions are no longer superharmonic in $\R^2$. If one exploits the comparison with the maximal operator, the condition ensuring the Liouville property becomes $N\leq \frac{\lambda}{\Lambda}+1$ (see \eqref{drift} with $b=0$), which rules out the case $N=2$. Therefore, even in the non-divergent case, the Liouville property in the plane is in general lost if no further conditions are imposed. Indeed, if one considers the equation $-\mathrm{Tr}(AD^2u)\geq H$, since $H\geq0$, it follows that $u$ is a solution to $-\mathrm{Tr}(AD^2u)\geq0$ in $\R^2$, and a Liouville property could be obtained under further assumptions at infinity of the diffusion matrix, cf e.g. \cite{Sobol}. For instance, one can apply \cite[Theorem 2.1]{BG_lio1} provided there exists a Lyapunov function for $-\mathrm{Tr}(A(x)D^2w)=0$ in $\R^2$, $\lambda I_N\leq A\leq \Lambda I_N$. Standard calculations for a radial function $w(x)=f(|x|)$ lead to
\[
-\mathrm{Tr}(A(x)D^2w)=-\frac{\mathrm{Tr}(Ax\otimes x)}{|x|^2}\left(f''(|x|)+\frac{\Psi_a(x)-1}{|x|}f'(|x|)\right)\ ,
\]
where $\Psi_a(x)=\frac{\mathrm{Tr}(A)}{\frac{\mathrm{Tr}(Ax\otimes x)}{|x|^2}}$ is the so-called effective dimension of the linear operator, cf \cite[p.518]{MeyersSerrin}, which is now the quantity responsible for the qualitative behavior of the linear operator. Therefore, if one takes $w(x)=-\log|x|$, one has
\[
-\mathrm{Tr}(A(x)D^2w)=\frac{\mathrm{Tr}(Ax\otimes x)}{|x|^2}\left(\frac{\Psi_a(x)-2}{|x|^2}\right)\ .
\]
Note that for $A=I_N$ it follows $\Psi_a(x)=N$, while $\Psi_a$ stands for the linear counterpart of the intrinsic dimension $\beta$ of the Pucci's maximal operator. Hence, when $A=I_2$ it follows that $w$ is a subsolution in the plane, and the Liouville property holds for the Laplace equation in $\R^2$, cf \cite[Remark 2.2]{BG_lio1}. For general uniformly elliptic operators in non-divergence form, heuristically, if the matrix of the coefficients approaches to a constant matrix for large $|x|$, the value responsible for the Liouville properties of the non-divergent equation would be the same as the one for the Laplacian.

\end{rem}

\begin{rem} Owing to the same techniques and with appropriate modifications, the symmetric Liouville property for \textit{subsolutions bounded above} to the viscosity inequality
\[
F(x,D^2u)\le H_i(x,u,Du) \quad \text{ in }\R^N
\]
can be obtained via the analysis of the partial differential inequality
\[
\mathcal{P}^-_{\lambda,\Lambda}(D^2u)\le H_i(x,u,Du) \quad \text{ in }\R^N\ .
\]
\end{rem}

\begin{rem} We have discussed in this paper the Liouville properties for $C$-viscosity inequalities involving fully nonlinear operators. As far as we know, the analysis of similar properties in the framework of $L^p$-viscosity solutions is widely open. However, some results can be obtained in a straightforward manner from the linear theory. For example, if one looks at nonpositive (and not nonnegative!) $L^p$-viscosity solutions to the inequality $\mathcal{P}^-_{\lambda,\Lambda}(D^2u)\geq|Du|^\gamma$, it is possible to prove that $u$ is a constant for every $\gamma>0$ by requiring also $u\in L^p(\R^N)$, $1<p<\infty$. Indeed, $u$ solves also the inequality $-\Lambda\Delta u\geq |Du|^\gamma$ and in particular $u$ is superharmonic.  Then $v=-u$ is subharmonic and nonnegative. Combining the mean-value property and the H\"older's inequality one concludes that $v$ is a constant, and so is $u$. 
\end{rem}
\begin{rem}\label{quadlio}
It is well-known that the classical Liouville property fails for non-homogeneous problems, a simple example being the function $u(x)=|x|^2$, which is a nonnegative and non-constant solution to $-\Delta u=k$ in $\R^N$ with $k=-2N$. 
Therefore, if one avoids the restriction $F(x,0)=0$, and the equation is $(u, Du)$-independent, it is possible to formulate and expect different Liouville properties. For instance, if one imposes the condition $F(x,0)=k$ for some constant $k\in\R$ in the fully nonlinear case, one can expect to prove that $u$ is a quadratic polynomial instead of a constant. An instance of such a Liouville phenomenon appears for the Monge-Amp\`ere equation as a consequence of second order bounds, cf \cite[Theorem 1.7.2]{NV} or \cite[Remark 4]{Mooney}, the so-called J\"orgen-Calabi-Pogorelov Theorem, the Special Lagrangian equation \cite[Theorem 1.7.3]{NV} and other Hessian equations, as discussed in e.g. \cite[Corollary 1.2]{MS} and \cite{NV}.\\
We conclude by saying that in the case of solutions to Hessian equations without $x$-dependence, a related assumption is crucial. For instance, when looking at solutions (not super- or subsolutions) to $F(D^2u)=0$ in $\R^N$, the assumption $F(0)=0$ is essential to deduce the Liouville property when $N\geq 5$, cf \cite[Section 1.7.1]{NV}. However, it was conjectured, see \cite[Conjecture 1.7.1]{NV}, that such a restriction can be dropped in the lower dimensional cases $N\leq 4$.
\end{rem}

\begin{rem}\label{nplapl}
The above results involving fully nonlinear uniformly elliptic equations can be extended to problems driven by the normalized $p$-Laplacian: for $p\in(1,\infty)$, the operator \[-\widetilde\Delta_p u=-\frac{1}{p}|Du|^{2-p}\mathrm{div}(|Du|^{p-2}Du)=-\frac{1}{p}\Delta u-\frac{p-2}{p}\frac{\Delta_\infty u}{|Du|^2},\] can be indeed compared with Pucci's extremal operators. Unlike the $p$-Laplacian, we can write the above operator in non-divergence form as
\[
-\widetilde\Delta_p u=-\mathrm{Tr}(A(Du)D^2u)
\]
where
\[
A(Du)=\frac1p\left(I_N+(p-2)\frac{Du\otimes Du}{|Du|^2}\right)\ ,
\]
which is positive definite, and bounded for every $p\in(1,\infty)$. Moreover, its eigenvalues are bounded below by $\lambda=\min\left\{\frac1p,\frac{p-1}{p}\right\}$ and $\Lambda=\max\left\{\frac1p,\frac{p-1}{p}\right\}$. We then have
\[
\mathcal{P}^-_{\lambda,\Lambda}(D^2u)\leq -\widetilde\Delta_p u\leq \mathcal{P}^+_{\lambda,\Lambda}(D^2u)
\]
whenever $\lambda=\min\left\{\frac1p,\frac{p-1}{p}\right\}$ and $\Lambda=\max\left\{\frac1p,\frac{p-1}{p}\right\}$. Therefore, we can state the following consequences of Theorems \ref{mainsum} and \ref{rep}.

\begin{cor}
Any nonnegative viscosity solution to 
\[
-\widetilde\Delta_p u\geq u^q+|Du|^\gamma\text{ in }\R^N\ ,
\]
with $p>2$, $1<\gamma\leq \frac{(p-1)(N-1)+1}{(p-1)(N-1)}$ and $q>\frac{(p-1)(N-1)+1}{(p-1)(N-1)-1}$ is constant. Instead, when $1<p<2$ the same assertion holds for $1<\gamma\leq \frac{N+p-2}{N-1}$ and $q>\frac{N+p-2}{N-p}$.
\end{cor}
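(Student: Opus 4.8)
The plan is to deduce the statement directly from Theorem~\ref{mainsum} by means of the operator pinching recalled just above the Corollary. Indeed, with $\lambda=\min\{\tfrac1p,\tfrac{p-1}{p}\}$ and $\Lambda=\max\{\tfrac1p,\tfrac{p-1}{p}\}$ one has $\mathcal{M}^-_{\lambda,\Lambda}(D^2u)\le -\widetilde\Delta_p u\le \mathcal{M}^+_{\lambda,\Lambda}(D^2u)$, so the right-hand inequality shows that any nonnegative viscosity supersolution $u$ of $-\widetilde\Delta_p u\ge u^q+|Du|^\gamma$ in $\R^N$ is also a nonnegative viscosity supersolution of $\mathcal{M}^+_{\lambda,\Lambda}(D^2u)\ge u^q+|Du|^\gamma$, i.e.\ of \eqref{Fgrad+0} with $F=\mathcal{M}^+_{\lambda,\Lambda}$. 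It then suffices to identify the effective dimension $\beta=\tfrac{\Lambda}{\lambda}(N-1)+1$ and to check that the ranges of $q$ and $\gamma$ prescribed in the statement are exactly those of Theorem~\ref{mainsum}, after which that theorem gives that $u$ is constant.

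Carrying this out, when $p>2$ one has $\lambda=\tfrac1p$, $\Lambda=\tfrac{p-1}{p}$, hence $\tfrac{\Lambda}{\lambda}=p-1$ and $\beta=(p-1)(N-1)+1$, which exceeds $2$ for $N\ge 2$; a short computation gives $\tfrac{\beta}{\beta-1}=\tfrac{(p-1)(N-1)+1}{(p-1)(N-1)}$ and $\tfrac{\beta}{\beta-2}=\tfrac{(p-1)(N-1)+1}{(p-1)(N-1)-1}$, which are precisely the thresholds in the statement. When $1<p<2$ one has instead $\lambda=\tfrac{p-1}{p}$, $\Lambda=\tfrac1p$, hence $\tfrac{\Lambda}{\lambda}=\tfrac1{p-1}$ and $\beta=\tfrac{N-1}{p-1}+1=\tfrac{N+p-2}{p-1}$, which exceeds $2$ since $N>p$; then $\beta-1=\tfrac{N-1}{p-1}$ and $\beta-2=\tfrac{N-p}{p-1}$, so $\tfrac{\beta}{\beta-1}=\tfrac{N+p-2}{N-1}$ and $\tfrac{\beta}{\beta-2}=\tfrac{N+p-2}{N-p}$, again matching the statement. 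In both cases the conclusion follows from Theorem~\ref{mainsum}.

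The only point that is not completely routine is the singularity of $-\widetilde\Delta_p$ on $\{Du=0\}$, which forces one to work with the appropriate (relaxed) notion of viscosity solution for singular operators, using the lower/upper semicontinuous envelopes of the operator at the singular set. This is, however, harmless here: the pinching $\mathcal{M}^-_{\lambda,\Lambda}(X)\le -\widetilde\Delta_p(\xi,X)\le \mathcal{M}^+_{\lambda,\Lambda}(X)$ holds for every $\xi\neq 0$ and $X\in\mathrm{Sym}_N$, and the Pucci operators do not depend on the gradient slot, so the comparison survives the passage to the relaxed operator; hence a supersolution of the $\widetilde\Delta_p$-inequality is a supersolution of the Pucci inequality with no restriction. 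Alternatively, one can avoid the issue entirely by re-running the proof of Theorem~\ref{mainsum}: the radial barriers used in Lemma~\ref{mRincr} and the cut-off test function used in Lemma~\ref{mbound} all have non-vanishing gradient at the touching points that are actually exploited, so the singular set never enters the argument. Thus there is no genuine obstacle, and the Corollary is a direct corollary of Theorem~\ref{mainsum}.
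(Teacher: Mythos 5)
Your proposal is correct and follows essentially the same route as the paper: the pinching $\mathcal{M}^-_{\lambda,\Lambda}(D^2u)\le -\widetilde\Delta_p u\le \mathcal{M}^+_{\lambda,\Lambda}(D^2u)$ with $\lambda=\min\{\tfrac1p,\tfrac{p-1}{p}\}$, $\Lambda=\max\{\tfrac1p,\tfrac{p-1}{p}\}$ reduces the statement to Theorem \ref{mainsum}, and your computation of $\beta$ and of the thresholds $\tfrac{\beta}{\beta-1}$, $\tfrac{\beta}{\beta-2}$ in the two regimes $p>2$ and $1<p<2$ matches the Corollary exactly. Your additional remark on the singularity of the operator at $Du=0$ is a sensible precaution that the paper leaves implicit, and it does not change the argument.
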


\begin{cor}\label{gamedrift}
Let $u$ be a viscosity solution bounded from below to
\[
-\widetilde\Delta_p u+b(x)\cdot Du\geq A|Du|^\gamma \qquad \text{ in }\R^N\ ,
\]
and either
$$A > 0, \quad \text{and} \quad \gamma>0$$
or
$$A \in \mathbb R, \quad \text{and} \quad \gamma= 2. $$
Assume that $b$ satisfies \eqref{b} and
\begin{equation}
\limsup_{|x|\to\infty}b(x)\cdot x<
\begin{cases}
1-\frac{N(p-1)}{p} & \text{if $p > 2$} \\
1-\frac{N}{p}  & \text{if $1 < p \le 2$} .
\end{cases}
\end{equation}
Then, $u$ is constant.
\end{cor}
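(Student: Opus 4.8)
The plan is to obtain Corollary \ref{gamedrift} as an immediate consequence of Theorem \ref{rep}, by means of the pointwise comparison between the normalized $p$-Laplacian and the Pucci extremal operators recorded in Remark \ref{nplapl}. First I would fix the ellipticity constants associated with $-\widetilde\Delta_p$: writing $-\widetilde\Delta_p u=-\mathrm{Tr}(A(Du)D^2u)$ with $A(Du)=\frac1p\big(I_N+(p-2)\tfrac{Du\otimes Du}{|Du|^2}\big)$, the eigenvalues of $A(Du)$ are $\frac1p$ with multiplicity $N-1$ and $\frac{p-1}{p}$ simple, so that $\lambda I_N\le A(Du)\le\Lambda I_N$ with $\lambda=\min\{\frac1p,\frac{p-1}{p}\}$ and $\Lambda=\max\{\frac1p,\frac{p-1}{p}\}$; concretely $\lambda=\frac1p$, $\Lambda=\frac{p-1}{p}$ if $p>2$, and $\lambda=\frac{p-1}{p}$, $\Lambda=\frac1p$ if $1<p\le2$.

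Next I would transfer the differential inequality. Since $-\widetilde\Delta_p u\le\mathcal{M}^+_{\lambda,\Lambda}(D^2u)$ for these constants, any viscosity supersolution $u$ of the inequality in the statement is also a viscosity supersolution of the corresponding inequality with $\mathcal{M}^+_{\lambda,\Lambda}(D^2u)$ in place of $-\widetilde\Delta_p u$, which is exactly an inequality of the form \eqref{H1H2} with $F=\mathcal{M}^+_{\lambda,\Lambda}$. This $F$ is continuous, uniformly elliptic with constants $\lambda,\Lambda$ and satisfies $F(x,0)=0$; the Hamiltonian $A|Du|^\gamma$ is of the type allowed in Theorem \ref{rep} (either $A>0$, $\gamma>0$, or $A\in\R$, $\gamma=2$); and $b$ satisfies \eqref{b} by hypothesis. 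The only point to verify is that the drift condition of the corollary coincides with \eqref{drift} for these $\lambda,\Lambda$: a direct computation gives $\lambda-\Lambda(N-1)=1-\tfrac{N(p-1)}{p}$ when $p>2$ and $\lambda-\Lambda(N-1)=1-\tfrac Np$ when $1<p\le2$, so the assumption that $\limsup_{|x|\to\infty}b(x)\cdot x$ lies strictly below these thresholds implies \eqref{drift}. Theorem \ref{rep} then yields that $u$, being bounded from below, is constant.

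I do not expect a genuine obstacle here: this is a corollary, and the argument amounts to a one-line reduction together with the bookkeeping of the constants $\lambda,\Lambda$. The only mildly delicate point is the handling of the singular/degenerate behavior of $-\widetilde\Delta_p$ at critical points $\{Du=0\}$ within the viscosity framework, which is taken care of by the standard conventions already used in Remark \ref{nplapl} (both $\mathcal{M}^+_{\lambda,\Lambda}$ and the admissible test functions are regular there, and the supersolution inequality is tested against $C^2$ functions), together with the routine check that the two formulas for $\lambda-\Lambda(N-1)$ reproduce exactly the thresholds appearing in the regimes $p>2$ and $1<p\le2$.
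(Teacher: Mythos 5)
Your proposal is correct and is essentially the paper's own argument: Corollary \ref{gamedrift} is presented in Remark \ref{nplapl} exactly as a consequence of the comparison $\mathcal{M}^-_{\lambda,\Lambda}(D^2u)\leq-\widetilde\Delta_p u\leq\mathcal{M}^+_{\lambda,\Lambda}(D^2u)$ with $\lambda=\min\{\tfrac1p,\tfrac{p-1}{p}\}$, $\Lambda=\max\{\tfrac1p,\tfrac{p-1}{p}\}$ together with Theorem \ref{rep}, and your check that the two thresholds equal $\lambda-\Lambda(N-1)$ in the regimes $p>2$ and $1<p\le2$ is precisely the needed bookkeeping. The only caveat, inherited from the paper's statement rather than introduced by you, is the drift sign: to match \eqref{H1H2} literally (which carries $-b(x)\cdot Du$) the corollary's term $+b(x)\cdot Du$ should be read with the same convention (equivalently, replace $b$ by $-b$), since Theorem \ref{rep} and condition \eqref{drift} are formulated for that sign.
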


\end{rem}

We finally mention that, for nonlinear problems considered above, one could address different forms of the Liouville property. The following questions are inspired by the works by V. Kurta (see e.g. \cite{KurtaKawohl} and the references therein).

\begin{opp}[Liouville comparison principle]
It is known that if $u,v$ solve in weak sense the inequality
\[
-\Delta u-|u|^{q-1}u\leq -\Delta v-|v|^{q-1}v \quad \text{ in }\R^N\ ,u\leq v\text{ in }\R^N
\]
and $1<q\leq \frac{N}{N-2}$, then $u\equiv v$ in $\R^N$, without any assumption on the behavior at infinity of the couple $(u,v)$. Note that, taking $u=0$ in the previous statement, one has exactly the Liouville property discussed in the previous sections. Does the result extend to PDEs involving superlinear gradient terms and/or involving fully nonlinear operators? 
\end{opp}

A preliminary answer for fully nonlinear rotationally invariant operators perturbed with zero-th order terms with superlinear power growth is provided by the following result, which is based on the so-called transitivity of the viscosity inequalities.

\begin{thm}\label{lcp}
Let $1\leq q\leq\frac{\beta}{\beta-2}$ and $u,v\in C(\R^N)$ be respectively a viscosity sub- and supersolution to $\mathcal{P}^+_{\lambda,\Lambda}(D^2z)-|z|^{q-1}z=0$ in $\R^N$
satisfying $u\leq v$ in $\R^N$. Then, $u\equiv v$ in $\R^N$. 
\end{thm}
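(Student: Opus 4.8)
The strategy I would follow is to reduce Theorem~\ref{lcp} to the one-sided Liouville property for $\mathcal{M}^+_{\lambda,\Lambda}(D^2 z)\ge z^q$ recalled at the beginning of Section~\ref{sec;sum} (\cite[Theorem 4.1]{CLeoni}, \cite[Theorem 1.4-(i)]{ASannali}). Set $w:=v-u\ge 0$. The plan is to show that $w$ is itself a nonnegative viscosity supersolution of a Lane--Emden type inequality with exponent $q$, and then apply the known nonexistence result.

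\emph{Step 1: transitivity.} By the transitivity of viscosity inequalities --- a standard application of the theorem on sums \cite{CIL}, cf.\ \cite[Theorem 5.3]{CC} and \cite[Proposition 3.1]{BG_lio1} --- together with the subadditivity inequality $\mathcal{M}^+_{\lambda,\Lambda}(M-Q)\ge \mathcal{M}^+_{\lambda,\Lambda}(M)-\mathcal{M}^+_{\lambda,\Lambda}(Q)$, the difference $w=v-u$ is a viscosity supersolution of
\[
\mathcal{M}^+_{\lambda,\Lambda}(D^2 w)\ge |v|^{q-1}v-|u|^{q-1}u\qquad\text{in }\R^N.
\]
Concretely, given $\varphi\in C^2$ such that $w-\varphi$ has a local minimum at $\bar x$, I would double variables through $u(x)-v(y)+\varphi(y)-\tfrac1{2\eps}|x-y|^2$, use the theorem on sums to produce $X_\eps\le-Y_\eps$ with $X_\eps$ in a super-jet of $u$ and $D^2\varphi-Y_\eps$ in a sub-jet of $v$ at nearby points, combine the sub- and supersolution inequalities of $u$ and $v$, and let $\eps\to0$.

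\emph{Step 2: algebraic bound and reduction.} Since $u\le v$ and, for $q\ge1$, the function $t\mapsto|t|^{q-1}$ is even, nonnegative and nondecreasing on $[0,\infty)$, the integral of $|t|^{q-1}$ over an interval of fixed length is minimal when the interval is centered at the origin; hence, writing $\phi(t)=|t|^{q-1}t$ and $w=v-u$,
\[
|v|^{q-1}v-|u|^{q-1}u=\phi(v)-\phi(u)=q\int_u^v|t|^{q-1}\,dt\ \ge\ q\int_{-w/2}^{w/2}|t|^{q-1}\,dt=2^{1-q}\,w^q.
\]
Therefore $w\ge0$ solves $\mathcal{M}^+_{\lambda,\Lambda}(D^2 w)\ge 2^{1-q}w^q$ in the viscosity sense, and by positive $1$-homogeneity of $\mathcal{M}^+_{\lambda,\Lambda}$ the rescaled function $z:=w/2$ is a nonnegative viscosity supersolution of $\mathcal{M}^+_{\lambda,\Lambda}(D^2 z)\ge z^q$ in $\R^N$.

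\emph{Step 3: conclusion.} Since $1\le q\le\frac{\beta}{\beta-2}$, \cite[Theorem 4.1]{CLeoni} (equivalently \cite[Theorem 1.4-(i)]{ASannali}) forces $z\equiv0$, hence $w\equiv0$, i.e.\ $u\equiv v$. I do not expect a serious obstacle here: the only technical point is the precise transitivity statement of Step~1 for continuous viscosity sub/supersolutions, which is classical and already invoked elsewhere in the paper. The conceptual content is simply that, because $q\ge1$, the difference of the zeroth-order terms dominates $(v-u)^q$ from below, so that the Liouville comparison principle for $\mathcal{M}^+_{\lambda,\Lambda}(D^2 z)-|z|^{q-1}z=0$ collapses onto the already established one-sided Liouville property for $\mathcal{M}^+_{\lambda,\Lambda}(D^2 z)\ge z^q$.
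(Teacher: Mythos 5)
Your proposal is correct and follows essentially the same route as the paper's proof: transitivity of viscosity inequalities (\cite[Theorem 5.3]{CC}), the algebraic inequality $|v|^{q-1}v-|u|^{q-1}u\ge 2^{1-q}(v-u)^{q}$ for $u\le v$, $q\ge1$, and the known one-sided Liouville theorem for nonnegative supersolutions of $\mathcal{M}^+_{\lambda,\Lambda}(D^2z)\ge z^q$ with $q\le\frac{\beta}{\beta-2}$. The only cosmetic difference is that the paper first disposes of the case where $u$ and $v$ touch via the strong comparison principle and then retraces the argument of \cite[Theorem 4.1]{CLeoni} for $w=v-u>0$, whereas your integral form of the algebraic inequality and your black-box use of the cited Liouville result (after the harmless rescaling $z=w/2$) make that preliminary reduction unnecessary.
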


\begin{proof}
We first observe that if $u\leq v$ and they touch at some interior point $x_0$, then $u\equiv v$ in $\R^N$ by the strong comparison principle for Pucci's extremal operators, see e.g. \cite{GP}. Thus, we may assume $u<v$ in $\R^N$. By the transitivity of viscosity inequalities Lemma \ref{transvisc}, and the algebraic inequality
\[
(|v|^{q-1}v-|u|^{q-1}u)(v-u)\geq 2^{1-q}|v-u|^{q+1}\ ,
\]
it follows that $w=v-u\geq0$ solves in viscosity sense
\[
\mathcal{P}^+_{\lambda,\Lambda}(D^2w)\geq 2^{1-q}w^{q}\quad \text{ in }\R^N\ .
\]
Since $u\leq v$, we have
\[
w\geq0\ ,\quad \mathcal{P}^+_{\lambda,\Lambda}(D^2w)\geq0 \quad \text{ in }\R^N\ .
\]
Then, one can proceed as in \cite[Theorem 4.1]{CLeoni} 
 to conclude that $w\equiv0$ in $\R^N$, i.e. $u\equiv v$ in $\R^N$.
\end{proof}

\begin{opp}\label{opdist}
When the Liouville property fails, it is well-known that for problems driven by the Laplacian or the $p$-Laplacian it is possible to establish the sharp distance at infinity from the non-constant supersolution to the constant one, see e.g. \cite[Theorem 1.2]{KurtaProc}, \cite{Kurta}. Does this quantitative property extend to the fully nonlinear setting?
\end{opp}
We conclude with an open problem for truncated Laplacians, as studied in e.g. \cite{BGL}.
\begin{opp}
Do the results of the previous sections extend to the problem $-\mathcal{P}^+_k(D^2u)\geq H_i(u,Du)$, $u\geq0$ in $\R^N$, where $\mathcal{P}^+_k(X)=\sum_{i=N-k+1}^N e_i(X)$, for $2<k<N$?
\end{opp}

\section{The Liouville property for solutions}\label{sec:sol}

In this final section, we collect some remarks on the Liouville property for {\it solutions} to elliptic PDEs involving super-linear, first-order perturbations. Our starting point is the well-known result stating that any classical solution to 
\[
-\Delta u+|Du|^\gamma=0\text{ in }\R^N\ ,\gamma>1
\]
must be constant, see \cite[p.67]{SP} or \cite[Corollary IV]{Lions85}. Remarkably, no one-side bounds on $u$ are required. Such a strong result, which (partially) extends to quasi-linear problems, has important consequences in the study of the regularity and the qualitative behavior of solutions. In this direction, the literature is extensive, and we refer to \cite{VeronPre,Chang,VeronBook} for further references.

In what follows, we discuss some open questions (to our knowledge) regarding generalizations of this result to equations involving more general operators,  with particular emphasis to subelliptic and nonlocal problems. We stress that all the aforementioned results have been obtained through (local) gradient estimates, generally accomplished via refinements of the Bernstein method (see also the more recent works \cite{FPS2,VeronPre}). However, such gradient bounds seem to be difficult to be reproduced both in the subelliptic and the nonlocal frameworks due to the structure of the diffusion operator. 

\subsection{The subelliptic case}
We consider here the Liouville property for solutions to problems structured over H\"ormander vector fields, with {\it quadratic} gradient terms (that is, the so-called natural gradient growth). 

Consider a family $\X=\{X_1,...,X_m\}$, $m\leq N$, of linearly independent smooth vector fields on $\R^N$, $N\geq3$, having the following properties:

\begin{itemize}
\item[(i)] $X_i$'s are $\delta_\lambda$ homogeneous of degree one with respect to a family of non-isotropic dilations defined as
\[
\delta_\lambda:\R^N\to\R^N\ ,\delta_\lambda(x)=(\lambda^{\sigma_1}x_1,...,\lambda^{\sigma_N}x_N)
\]
where $1= \sigma_1\leq...\leq\sigma_N$ are positive integers, i.e. $X_i(\varphi(\delta_\lambda(x)))=\lambda (X_i\varphi)(\delta_\lambda(x))$ for every $\varphi\in C^\infty(\R^N)$, $x\in\R^N$, $\lambda>0$;
\item[(ii)] The system $\X$ satisfies the H\"ormander condition, see \cite{BLU} and the references therein.
\end{itemize}

We denote by $D_\X u=(X_1u,...,X_mu)\in\R^m$, $(D^2_\X u)^*=\frac{X_iX_ju+X_jX_iu}{2}\in\mathrm{Sym}_m$ respectively the horizontal gradient and Hessian of the unknown $u$, and $\Delta_\X u=\mathrm{Tr}((D^2_\X u)^*)=\sum_{i=1}^mX_i^2u$. We recall that, for instance, assumptions (i)-(ii) are satisfied by the vector fields generating stratified Lie groups and Grushin-type geometries in $\R^N$, $N\geq3$. We premise the following
\begin{lemma}[Degenerate Hopf-Cole transformation]\label{hc} Let $u$ be a classical solution to $-\Delta_\X u+b(x)\cdot D_\X u+|D_\X u|^2=f(x)$. Then $v=e^{-u}$ solves
\[
-\Delta_\X v+b(x)\cdot D_\X v+f(x)v=0\ .
\]
\end{lemma}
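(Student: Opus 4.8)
The plan is to verify the transformation by direct computation, exploiting the fact that the horizontal vector fields $X_i$ are first-order differential operators obeying the usual Leibniz and chain rules. First I would set $v = e^{-u}$ and compute the horizontal gradient: since each $X_i$ is a derivation, $X_i v = -e^{-u} X_i u$, hence $D_\X v = -e^{-u} D_\X u$ and in particular $|D_\X v|^2 = e^{-2u}|D_\X u|^2$. Next I would compute the second-order horizontal derivatives: applying $X_j$ to $X_i v = -e^{-u}X_iu$ gives $X_jX_i v = e^{-u}(X_ju)(X_iu) - e^{-u}X_jX_iu$, so that after symmetrizing and taking the trace, $\Delta_\X v = e^{-u}\big(|D_\X u|^2 - \Delta_\X u\big)$.

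Then I would substitute these into the target expression. We obtain
\[
-\Delta_\X v + b(x)\cdot D_\X v + f(x)v = -e^{-u}\big(|D_\X u|^2 - \Delta_\X u\big) - e^{-u}\, b(x)\cdot D_\X u + f(x)e^{-u},
\]
which factors as
\[
e^{-u}\Big(\Delta_\X u - |D_\X u|^2 - b(x)\cdot D_\X u + f(x)\Big) = e^{-u}\Big(-\big(-\Delta_\X u + b(x)\cdot D_\X u + |D_\X u|^2\big) + f(x)\Big).
\]
By the hypothesis $-\Delta_\X u + b(x)\cdot D_\X u + |D_\X u|^2 = f(x)$, the quantity inside the parentheses vanishes, so $-\Delta_\X v + b(x)\cdot D_\X v + f(x)v = 0$, as claimed.

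Since $u$ is assumed to be a classical solution, all the manipulations above are legitimate pointwise computations and there is no real analytic obstacle; the only point requiring a word of justification is that the symmetrized horizontal Hessian enters only through its trace $\Delta_\X u = \sum_i X_i^2 u$, and that the cross terms $X_jX_iu$ for $i\neq j$ cancel in pairs upon symmetrization so that $\mathrm{Tr}((D^2_\X v)^*)$ depends on $u$ only through $\Delta_\X u$ and the quadratic term $|D_\X u|^2$. Thus the computation closes exactly as in the Euclidean Hopf–Cole identity, the non-commutativity of the $X_i$'s being irrelevant because only the trace of the (symmetrized) horizontal Hessian appears. The main — and minor — subtlety is simply bookkeeping: one must be careful that $X_jX_i v$ is evaluated using that $X_j$ is a first-order operator acting on the product $e^{-u}\cdot X_iu$, which produces precisely the two terms above.
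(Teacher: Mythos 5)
Your proposal is correct and follows essentially the same route as the paper: compute $D_\X v=-e^{-u}D_\X u$ and $\Delta_\X v=e^{-u}\big(|D_\X u|^2-\Delta_\X u\big)$ using the Leibniz rule for the first-order operators $X_i$, then substitute into the target equation and use the hypothesis on $u$. The only cosmetic imprecision is your closing remark: the mixed derivatives $X_jX_iu$, $i\neq j$, do not ``cancel in pairs upon symmetrization'' --- they simply never appear, since $\Delta_\X v=\mathrm{Tr}((D^2_\X v)^*)=\sum_i X_i^2 v$ involves only the diagonal entries.
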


\begin{proof}
Using standard calculus rules in the subelliptic setting, cf \cite{BLU}, we write
\[
D_\X v=-vD_\X u
\]
\[
D^2_\X v=vD_\X u\otimes D_\X u-vD^2_\X u\implies \Delta_\X v=\mathrm{Tr}(D^2_\X v)=v(|D_\X u|^2-\Delta_\X u)\ .
\]
Then
\begin{multline*}
-\Delta_\X v+b(x)\cdot D_\X v+f(x)v=-v(|D_\X u|^2-\Delta_\X u)-vb(x)\cdot D_\X u+f(x)v\\
=v[\Delta_\X u-|D_\X u|^2-b(x)\cdot D_\X u+f(x)]=0\ .
\end{multline*}
\end{proof}

As a simple consequence one can deduce the following result, that also gives an alternative proof to \cite[p.67]{SP}, \cite[Corollary IV]{Lions85} in the quadratic case $\gamma=2$ for Euclidean vector fields.

\begin{thm}\label{thmviahopf}
Assume (i)-(ii). Let $u$ be a classical solution of 
\begin{equation}\label{hjquad2}
-\Delta_\X u+|D_\X u|^2=0\text{ in }\R^N\ .
\end{equation}
Then, $u$ is constant.
\end{thm}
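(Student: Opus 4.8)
The plan is to apply the degenerate Hopf--Cole transformation from Lemma \ref{hc} to reduce the Hamilton--Jacobi equation \eqref{hjquad2} to a linear one, and then invoke a Liouville property for subelliptic harmonic functions. First I would apply Lemma \ref{hc} with $b \equiv 0$ and $f \equiv 0$: setting $v = e^{-u}$, the lemma immediately yields that $v$ is a classical solution of $-\Delta_\X v = 0$ in $\R^N$, i.e.\ $v$ is $\X$-harmonic. Moreover, since $v = e^{-u} > 0$ everywhere, $v$ is a \emph{positive} entire subelliptic harmonic function.

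The second step is to conclude that a positive entire solution of $\Delta_\X v = 0$ on all of $\R^N$ must be constant. Under assumptions (i)--(ii), the operator $\Delta_\X$ is a sum-of-squares H\"ormander operator which is $\delta_\lambda$-homogeneous, and the associated control (Carnot--Carath\'eodory) geometry carries a doubling measure and supports a scale-invariant Harnack inequality for nonnegative solutions (see e.g.\ \cite{BLU} and the references therein). The standard Moser-type argument then gives the Liouville theorem: a nonnegative $\X$-harmonic function on the whole space is constant. Concretely, one applies the invariant Harnack inequality to $v$ on balls $B(0,R)$ and lets $R \to \infty$; since $\inf_{B(0,R)} v \le v(0) \le \sup_{B(0,R)} v \le C \inf_{B(0,R)} v$ with $C$ independent of $R$, and $v$ is bounded below by $0$, oscillation control forces $v$ to be constant. (Alternatively, one can note that $v>0$ is $\X$-superharmonic and bounded below, and use a Lyapunov-function/Khas'minskii-type argument as in Lemma \ref{lemdrift}, since $w = -\log d_\X(x)$, with $d_\X$ a homogeneous gauge, is an explosive subsolution of $\Delta_\X$ outside a compact set in this homogeneous setting.)

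Finally, from $v \equiv \text{const}$ and $v = e^{-u}$ one gets $u \equiv \text{const}$, which is the claim. The main obstacle I expect is the justification of the Liouville property for positive $\X$-harmonic functions: unlike the Euclidean Laplacian, this is not completely elementary and relies on the H\"ormander condition plus $\delta_\lambda$-homogeneity to guarantee the doubling property and the invariant Harnack inequality on Carnot--Carath\'eodory balls. Once that ingredient is granted (it is classical for stratified groups and for Grushin-type operators, which are exactly the examples highlighted after assumption (ii)), the rest is just bookkeeping. A minor technical point worth checking is that $v = e^{-u}$ is genuinely a classical solution — this is immediate since $u \in C^2$ and the chain rule for the vector fields $X_i$ applies verbatim, as already recorded in the proof of Lemma \ref{hc}.
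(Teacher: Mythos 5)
Your proposal is correct and follows essentially the same route as the paper: apply the degenerate Hopf--Cole transform of Lemma \ref{hc} (with $b\equiv 0$, $f\equiv 0$) to obtain a positive entire $\X$-harmonic function $v=e^{-u}$, then conclude by a Liouville property for one-side bounded solutions of $-\Delta_\X v=0$. The only difference is cosmetic: where you sketch the invariant-Harnack/Moser argument yourself, the paper simply cites \cite[Corollary 8.3]{KL} and \cite[Proposition 5.5]{BiagiLanconelli}, which are themselves consequences of such Harnack inequalities.
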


\begin{proof}
We use the Hopf-Cole transform in Lemma \ref{hc} to show that if $u$ solves in classical sense \eqref{hjquad2}, then $v=e^{-u}>0$ is a positive solution to
\[
-\Delta_\X v=0\text{ in }\R^N\ .
\]
We then apply the Liouville property in \cite[Corollary 8.3]{KL}, \cite[Proposition 5.5]{BiagiLanconelli} (which are obtained as a consequence of Harnack inequalities) to conclude that $v$ is constant since it is one-side bounded. Then, also $u$ is constant.
\end{proof}

\begin{rem}
A similar property holds for solutions to nonlinear problems driven by the stationary Kolmogorov operator 
\[
v\cdot D_x u-\Delta_vu+|D_vu|^2=0\text{ in }\R^{2N}\ ,(x,v)\in \R^N\times\R^N\ .
\]
Indeed, $w=e^{-u}>0$ solves $v\cdot D_x w-\Delta_v w=0$ in $\R^{2N}$ and  $w$ is constant by the Liouville property obtained in \cite{KL} via the Harnack inequality, since $w$ is one-side bounded.
\end{rem}

While the proof in the special quadratic case is rather easy (basically it requires a linear result only), the non-quadratic case appears to be challenging. Indeed, classical results \cite[p.67]{SP} or \cite[Corollary IV]{Lions85} are consequence of gradient bounds (obtained via the so-called Bernstein method). It is not clear at this stage how to prove analogous bounds in the subelliptic framework.

\smallskip

We now give some comments on possible generalizations to the fully nonlinear framework. The problem seems to be open even in the Euclidean setting.

\begin{opp}
Let $\gamma > 1$ and $u$ be a viscosity solution to 
\[
\mathcal{P}^\pm_{\lambda,\Lambda}(D^2u)+|Du|^\gamma=0\text{ in }\R^N\  .
\]
Can we conclude that $u$ is a constant (without assuming any bound on $u$)?
\end{opp}
We emphasize that even in the quadratic case $\gamma=2$, the result cannot be deduced through the Hopf-Cole transformation, since such change of variable leads to an inequality (as in Theorem \ref{rep}), rather than to an equality. Still, we mention that some results in these direction recently appeared in \cite{FQ}.

More generally, having in mind the works \cite{SP,Lions85,FPS2,VeronDuke}, one could formulate the
\begin{opp}
Let $F((D^2_\X u)^*)=\mathcal{P}^\pm_{\lambda,\Lambda}((D^2_\X u)^*)$ or the $p$-sub-Laplacian $-\Delta_{p,\X}u=-\mathrm{div}_\X(|D_\X u|^{p-2}D_\X u)$, $\mathrm{div}_\X(\Phi(x))=\sum_{i=1}^mX_i\Phi$, $\Phi:\R^N\to\R^m$. Does the one (or two) side Liouville property for solutions to
\[
F((D^2_\X u)^*)\pm u^q|D_\X u|^\gamma=0\text{ in }\R^N\ ,q>0\ ,\gamma>1\ ,
\]
hold? Does the property hold without bounds on the solution as in \cite{SP,Lions85}?
\end{opp}

\subsection{Nonlocal problems}
The derivation of Liouville properties for solutions to equations driven by nonlocal operators, like the fractional Laplacian $(-\Delta)^s$, $s\in(0,1)$, perturbed by gradient terms with superlinear nature, seems to be also a nontrivial issue.
We mention the following

\begin{opp}
Does the Liouville property hold for solutions to
\[
\mathcal{L}_s u\pm|Du|^\gamma=0\text{ in }\R^N\ ,
\]
where $\mathcal{L}_s=-\Delta +(-\Delta)^s$ or $\mathcal{L}_s=(-\Delta)^s$, $s\in(0,1)$, $\gamma>0$?
\end{opp}

As in the subelliptic setting, the derivation of gradient estimates via Bernstein-type arguments seems to be not feasible (or, at least, not straightforward), and a different procedure might be required. In this direction, we mention some recent results obtained in \cite{Jakobsen} for linear equations, which combine PDE and group theory techniques. 


\small

\bibliographystyle{amsplain}



%
%
%
%
%
%
%


\providecommand{\bysame}{\leavevmode\hbox to3em{\hrulefill}\thinspace}
\providecommand{\MR}{\relax\ifhmode\unskip\space\fi MR }
\providecommand{\MRhref}[2]{%
  \href{http://www.ams.org/mathscinet-getitem?mr=#1}{#2}
}
\providecommand{\href}[2]{#2}

\end{document}